\documentclass[a4paper, 10pt]{amsproc}
\usepackage{defs}
\usepackage{orcidlink}
\usepackage{calc}
\usepackage[dvipsnames]{xcolor}
\usepackage{tabularray}
\definecolor{metablue}{HTML}{0064E0}
\definecolor{metafg}{HTML}{1C2B33}
\definecolor{metabg}{HTML}{F1F4F7}
\definecolor{metabgdeep}{HTML}{D9EFFF}
\definecolor{metagreen}{HTML}{EAFFE8}
\definecolor{metagreen}{HTML}{FCFFEE}
\definecolor{metared}{HTML}{FFEAE8}

\DeclareSymbolFont{extraup}{U}{zavm}{m}{n}
\DeclareMathSymbol{\varheart}{\mathalpha}{extraup}{86}
\DeclareMathSymbol{\vardiamond}{\mathalpha}{extraup}{87}
\DeclareMathSymbol{\varclub}{\mathalpha}{extraup}{84}
\DeclareMathSymbol{\vardspade}{\mathalpha}{extraup}{85}

\usepackage[framemethod=tikz,xcolor=true]{mdframed}

\newmdenv[backgroundcolor=metabgdeep, roundcorner=10pt, skipabove=4pt, linewidth=0pt, innertopmargin=4pt]{myframe}

\newmdenv[backgroundcolor=metabg, roundcorner=10pt, skipabove=4pt, linewidth=0pt, innertopmargin=4pt]{myOCP}

\newmdenv[backgroundcolor=metared, roundcorner=10pt, skipabove=7pt, linewidth=0pt, innertopmargin=7pt]{myalgo}
\newmdenv[%
    leftmargin=0.5cm,
    backgroundcolor=yellow!10,%
    roundcorner=5pt,%
    tikzsetting={draw=red, line width=2.0pt}%
    ]{SpecialText}%

\title[Opt-Stop-Con]{Nonlinear Stochastic Optimal Control and Optimal Stopping using the Fokker--Planck Transformation}

\author[A. Selim]{Akan Selim\,\orcidlink{0000-0002-6505-5796}}
\author[S. Ganguly]{Siddhartha Ganguly\,\orcidlink{0000-0003-2046-2061}}
\author[A. Pakniyat]{Ali Pakniyat\,\orcidlink{0000-0003-0665-0330}}
\author[P. Tsiotras]{Panagiotis Tsiotras\,\orcidlink{0000-0001-7563-4129}}

\thanks{%
	A. Selim, S. Ganguly and P. Tsiotras are with \faGroup\ Daniel Guggenheim School of Aerospace, \faUniversity\ Georgia Institute of Technology, \faMapMarker\  Atlanta, USA. A. Pakniyat is with \faGroup\ Department of Mechanical Engineering, \faUniversity\ University of Alabama, \faMapMarker\ Tuscaloosa, USA}

\thanks{%
	Contact Information: (AS) \faEnvelope\ \texttt{aselim8@gatech.edu}, (SG) \faHome\ \url{https://sites.google.com/view/siddhartha-ganguly}, \faEnvelope\ \texttt{sganguly41@gatech.edu}, (AP): \faHome\ \url{https://pakniyat.github.io/}, \faEnvelope\ \texttt{apakniyat@ua.edu}, (PT) \faHome\ \url{https://dcsl.gatech.edu/tsiotras.html}, \faEnvelope\ \texttt{tsiotras@gatech.edu}.
}

\begin{document}

\subjclass[2020]{\textbf{93E20, 93E03, 49K20, 49L99, 58E25, 65K10}}
\keywords{\textbf{Stochastic optimal control, optimal stopping, optimal transport}}

\maketitle

\begin{abstract}
In this paper, we develop a theoretical framework for nonlinear stochastic optimal control problems with optimal stopping by establishing a density-based deterministic representation of the underlying diffusion. For state-independent diffusion, we rewrite the controlled Fokker–Planck equation as a continuity equation driven by a score-corrected velocity field, yielding a deterministic characteristic dynamics that reproduces the marginal law of the stochastic system. Leveraging Stein-type identities, we show that the associated distributional dynamic programming equation admits the same second-order differential operator as the distributional stochastic Hamilton–Jacobi–Bellman formulation. Building on this representation, we formulate an optimal control problem with state-dependent terminal-time assignment and terminal distributional constraints and derive the first-order necessary conditions using variational analysis. We present the conditions both for a common terminal time and for the general case of state-dependent stopping.
\end{abstract}

\section{Introduction}\label{sec:intro}

In vast majority of stochastic optimal control problems, the objective is not merely to guide a single state trajectory, but to shape the \emph{entire distribution} of possible system states. This perspective is natural in applications such as robotic swarms \cite{intro1}, where one seeks to steer a population of agents toward a desired spatial distribution, and in aerospace guidance \cite{intro2,intro3}, where uncertainty in sensing, actuation, or the environment makes distribution-level guarantees more meaningful than nominal trajectory tracking alone. 
Similar questions also arise in finance \cite{intro4,intro5}, where one may wish to control the law of a stochastic process while deciding the appropriate time to stop, liquidate, or intervene. 
Optimal stopping has also been studied in related game-theoretic settings through non-anticipative stopping strategies; see, for instance, \cite{intro6}.
In such problems, one must determine \emph{both how to control the dynamics and when to stop the process}. This coupling is fundamental: continuing for too long may increase risk, cost, or exposure to constraints, whereas stopping too early may prevent the system from reaching the desired terminal distribution. 
As a result, the problem is substantially more challenging than standard fixed-horizon distribution steering, since it combines stochastic dynamics, evolution of the full state law, and an endogenous stopping mechanism. These features make direct trajectory-based analysis difficult and call for a framework that can treat control, stopping, and distribution evolution in a unified manner. Motivated by this, in this paper we develop a \emph{density-based approach}, based on a \emph{Fokker–Planck transformation}, for nonlinear stochastic optimal control problems with optimal stopping and terminal distributional constraints.

\textbf{On existing literature:} Significant progress has been made for the fixed final time problem, particularly through solutions to the classical Schr\"{o}dinger Bridge Problem (SBP) \cite{sch1,sch2,ref:teter2025probabilistic} via the theory of optimal transportation \cite{book_ge3,ref:YC:TG:MP:SB-and-OT}. However, less is known about the distribution steering problem with free final time, which is closely related to the problem of \emph{optimal stopping}. The literature on optimal stopping is extensive and rich: the \emph{Skorokhod Embedding Problem} (SEP) \cite{SEP1,SEP-eng} addresses whether a stopping time can transform a Brownian motion into a random variable with a given target distribution. Skorokhod linked the problem of finding the optimal stopping times to the Shiryaev Inverse Problem (SIP) \cite{SEP2}, which, conversely, determines stopping boundaries for stochastic processes. 
In fact, the first hitting time of the stopping boundary is exactly the solution to the SEP; see \cite{SEP29} for further details.
The seminal work \cite{SEP3} proposed using barriers in the combined state-time space to define stopping times. The ideas established in \cite{SEP3} connected SEP to variational inequalities and free-boundary problems \cite{SEP4,SEP5}. \cite{SEP6} extended SEP to Markov processes using potential theory and proved Kiefer's conjecture \cite{Kiefer}, namely, that Root's stopping time \cite{SEP3}  embedding minimizes the variance of the stopping time \cite{SEP7}. \cite{SEP9} provided an alternative solution to SEP for any recurrent real-valued diffusion process; the authors employed Martingale theory to derive stopping times that maximize the law of the supremum of the stopped process. Recent studies extended their research to the drifting Brownian motion \cite{SEP13,SEP14}. Multivariate generalizations of these results have also been studied, see, for instance \cite{SEP21}. 

A technique for constructing stopping times that minimize the distribution of the maximum of a stochastic process, while simultaneously maximizing the distribution of the minimum was established in \cite{SEP11}. This approach introduced a duality between the Azéma-Yor \cite{SEP9} and Perkins embeddings, where the former focuses on maximizing the law of the maximum of the process and the latter on minimizing it. Perkins’ duality \cite{SEP11} opened new avenues for constructing embeddings with different optimality criteria. His methods were later extended to general SEP frameworks in \cite{SEP12}. Furthermore, \cite{SEP17} proposed an optimal SEP formulation focused on cost minimization for steering a probability measure from one distribution to another in an optimal manner. More recent advances include multi-dimensional extensions, PDE-based free-boundary methods \cite{SEP18}, and neural network techniques for approximating stopping times \cite{numeric2,numeric3}. PDE approaches, in particular, have enabled the characterization of free boundaries in stochastic differential equations (SDEs), yielding solutions to complex embedding problems \cite{SEP5}. On the other hand, research on duality and discretization methods \cite{SEP19,SEP25} has investigated how duality principles with the Stochastic Maximum Principle (SMP) can be applied to optimal stopping problems, particularly those involving barrier-type stopping times like the Root and Rost embeddings \cite{SEP3, SEP7}. 

A related but distinct line of work has recently extended the Schr\"odinger bridge framework to settings with killing and mass loss. In particular, \cite{killing_tg1} studies killed stochastic processes with prescribed spatio-temporal marginals of the killed particles, derives the associated space-time Schr\"odinger system, and proposes a Fortet-Sinkhorn-type algorithm for computing the corresponding bridge. On the other hand, \cite{killing_tg2} analyzes the interpolation of unbalanced endpoint marginals within an unbalanced Schr\"odinger bridge framework. Although these works are related in that they also treat the stopping in stochastic flows, their setting relies on a killing-field formulation, so that particles may stop anywhere in the killing region, whereas in our setting stopping is allowed only at a free-boundary that must itself be determined as part of the distribution-constrained optimal control and stopping problem.

We also note that \cite{ref:AP:PP:ACC:MinPrnciple:Steering} considered filtration-adapted stochastic state steering via stochastic minimum-principle methods, enforcing terminal constraints on the conditional expectation of the state as information evolves in time; extensions to partially observed linear systems \cite{ref:AP:PS:CDC:PartObs:Steering} have also been obtained. While these results capture an important notion of information-adapted terminal steering, they remain fundamentally different from the present setting, since they impose terminal constraints at the level of state expectation rather than the full terminal law, and do not incorporate optimal stopping. Although convex duality provides a pathway for assigning full probability distributions at fixed terminal times \cite{pakniyat2022convex, pakniyat2025graphon}, this formulation comes at the cost of requiring an optimization over a class of Hamilton-Jacobi type partial differential equations. Related dynamic-programming formulations also appear in stochastic reach-avoid and motion-planning problems for diffusions, where a form of PDE characterizations have been developed for exit-time and sequential stopping objectives with discontinuous payoffs \cite{ref:PM:DC:JL:TAC:Motion:Planning,ref:PM:DC:JL:Aut:ReachAvoid}. These works, however, focus on state-space reachability and path-planning specifications rather than terminal distributional constraints and distribution-level value functions.

Despite the previous developments, general $n$-dimensional stochastic optimal control and stopping problems, including optimal common stopping, with distributional constraints remain largely unexplored. To address this gap, with the long-term goal of developing a scalable sampling-based method, we establish a novel approach for finding optimal stopping strategies in nonlinear stochastic optimal control problems subject to distributional constraints. The primary focus of this article is on \highlight{theoretical developments}, with an eye toward the imminent development of tractable \emph{algorithmic architectures}.

\subsection*{Contributions}
Against this backdrop, our contributions are highlighted below: 
\begin{enumerate}[label={\textup{(C-\alph*)}}, leftmargin=*, widest=b, align=left]
\item \label{intro:contrib:1} \textbf{A distributional framework for stochastic optimal stopping:} We formulate a nonlinear stochastic optimal control problem (OCP) with \emph{optimal stopping} and \emph{distributional objectives} in a measure-theoretic framework that explicitly captures the effect of stopping on the evolving state distribution. This allows us to represent the control problem directly at the distribution level, define both pointwise and distributional value functions, and derive the associated \emph{distributional stochastic Hamilton–Jacobi–Bellman equation} together with a decomposition result linking the distributional value to the expectation of the pointwise value. 

\item \label{intro:contrib:2} \textbf{A deterministic reformulation via the Fokker–Planck transformation:} For the case of state-independent diffusion, we introduce a \emph{Fokker–Planck transformation} that rewrites the stochastic density evolution as a continuity equation with a score-corrected velocity field. This yields a deterministic characteristic description whose evolving marginal law matches that of the original stochastic system, thereby providing a density-based deterministic representation of the diffusion process. This is a crucial idea that enables further analysis. In addition, we characterize also the corresponding \emph{reverse Fokker-Planck transformation}, showing how It\^o's formula \cite[Theorem 4.2.1]{book1} is recovered from the deterministic formulation.

\item \label{intro:contrib:3} \textbf{Equivalence of the reformulated and original problems:} Using Stein-type identities, we show that the transformed deterministic formulation retains the same distributional dynamic programming structure as the original stochastic problem. In particular, the associated distributional value function satisfies an equation involving the same second-order operator as the distributional stochastic Hamilton–Jacobi–Bellman equation, which establishes the equivalence of the two formulations at the level of value-function characterization.

\item \label{intro:contrib:4} \textbf{First-order necessary conditions:} Building on this reformulation, we formulate an optimal control problem with state-dependent terminal-time assignment and terminal distributional constraints, and analyze it by variational methods in the space of probability measures. This yields a (Pontryagin-type) first-order optimality system, applicable to both the common terminal-time case and the more general free/state-dependent stopping case, comprising control stationarity, adjoint dynamics with diffusion-induced correction terms, and appropriate transversality conditions. 
\end{enumerate}
A bird's-eye view of our contributions is depicted in Fig.~\ref{fig:paper_overview}. The rest of the article unfolds as follows: \S\ref{sec:prob:form} collects the required preliminaries, including notation, measure-theoretic background, the controlled diffusion model, admissible controls and stopping boundaries, and the conditional expectation framework used throughout. In \S\ref{sec:problem}, we formulate the central problem, introduce the pointwise and distributional value functions, discuss the associated stochastic Hamilton–Jacobi–Bellman equation, and develop the alive/sub-probability measure framework for stopped processes. \S\ref{sec:main_result} contains the main results of the paper. In particular, \S\ref{subsec:main:results:Dist:sHJB} derives the distributional stochastic HJB equation together with the decomposition of the distributional value function into the expectation of the pointwise value function; \S\ref{subsec:main:results:FP:transform} introduces the Fokker–Planck transformation and the deterministic characteristic reformulation; \S\ref{subsec:DetStoc:rel} establishes the relationship between the transformed deterministic formulation and the original stochastic problem, showing that the transformed problem preserves the same dynamic programming structure; and \S\ref{var_analysis_tr_pr} develops the variational analysis of the transformed problem, including the distributionally constrained problem formulation, the measure-flow representation, and the augmented Lagrangian derivation of first-order necessary optimality conditions. The paper then concludes with Appendix \ref{appen:notation} collecting the notation used throughout the manuscript, and Appendix \ref{Appendix:A} giving a detailed proof to one our our main results.

\begin{figure}[t]
\centering
\begin{tikzpicture}[
    >=Latex,
    every node/.style={font=\small},
    box/.style={
        draw,
        rounded corners=3mm,
        very thick,
        minimum height=2.05cm,
        text width=.31\linewidth,
        align=center,
        inner sep=6pt,
        fill=#1
    },
    note/.style={
        draw,
        rounded corners=2mm,
        dashed,
        align=center,
        text width=.90\linewidth,
        inner sep=7pt,
        fill=gray!6
    },
    arr/.style={
        ->,
        line width=1.1pt,
        draw=black!70,
        rounded corners=4pt
    }
]

\node[note] (top) {
\textbf{Overview:}
The paper develops a density-based route for stochastic control with stopping and terminal distributional goals:
it begins with a distribution-level formulation, introduces a deterministic reformulation of density evolution,
establishes consistency with the original stochastic problem, and derives a unified first-order optimality system.
};

\matrix (M) [matrix of nodes,
    below=5mm of top,
    column sep=.08\linewidth,
    row sep=12mm,
    nodes={anchor=center}
] {
    \node[box=blue!10] (A) {\textbf{Distributional framework}\\[1mm]
    Formulate the problem at the level of evolving state distributions and value functions}; &
    \node[box=orange!15] (B) {\textbf{Deterministic reformulation}\\[1mm]
    Recast density evolution through a Fokker--Planck-based characteristic description}; \\
    \node[box=violet!13] (C) {\textbf{Equivalence result}\\[1mm]
    Show that the reformulated problem preserves the dynamic programming structure}; &
    \node[box=green!14] (D) {\textbf{Necessary conditions}\\[1mm]
    Derive unified first-order optimality conditions for common and free stopping with terminal distributional constraints}; \\
};

\coordinate (p1) at ($(B.south)+(0,-5mm)$);
\coordinate (p2) at ($(C.north)+(0,5mm)$);
\coordinate (p3) at ($(p2 |- p1)$);

\draw[arr] (A.east) -- (B.west);
\draw[arr] (B.south) -- (p1) -- (p3) -- (p2) -- (C.north);
\draw[arr] (C.east) -- (D.west);

\end{tikzpicture}
\caption{Conceptual roadmap of our article.}
\label{fig:paper_overview}
\end{figure}

\section{Some preliminaries and background}\label{sec:prob:form}
We begin by listing the notation used throughout the article; our notations are standard unless stated otherwise.A comprehensive list of all the notations employed has been recorded in Appendix \ref{appen:notation}.
\subsection*{Notation} \label{notation_sec}
We let $\N \Let \aset{1,2,\ldots}$ denote the set of positive integers, and let $\Z$ be the set of integers. 
Let $n \in \N$; we will denote the standard Euclidean vector space by $\Ree^n$, which is assumed to be equipped with standard norm \(\Ree^n \ni x\mapsto \norm{x}\). 
By \(\R[n]_{\ge 0}\) we denote the set of all \(n\)-dimensional real vectors whose components are all nonnegative. $\Borelsigalg(\Ree^n)$ denotes the standard Borel $\sigma$-algebra on $\Ree^n$, i.e., the smallest $\sigma$-algebra generated by the open sets in $\Ree^n$, and $\polish(\Ree^n)$ denotes the set of all probability measures on the measurable space 
$\bigl(\Ree^n, \Borelsigalg(\Ree^n)\bigr)$. $\polish_{\rm sub}(\Ree^n)$ denotes the set of all nonnegative Borel sub-probability measures on the same measurable space, i.e.,
\begin{align*}
\polish_{\rm sub}(\Ree^n) \Let \aset[\big]{\mu \suchthat \mu \text{ is a nonnegative Borel measure on } \Ree^n \text{ and } \mu(\Ree^n)\le 1 }. \nn
\end{align*}

For two measures $\mu, \nu$ on the same measurable space $(\Ree^n,\Borelsigalg(\Ree^n))$, we write $\mu \ll \nu$ if $\mu$ is absolutely continuous with respect to $\nu$, i.e., if $\nu(A)=0$ implies $\mu(A)=0$ for all $A \in \Borelsigalg(\Ree^n)$ (see \cite[p.~164]{book1}).
We denote by $\mathrm{Leb}^n$ the $n$-dimensional Lebesgue measure on $(\Ree^n,\Borelsigalg(\Ree^n))$. 
For any measurable set
$D\subseteq \Ree^n$ we write $\mathrm{Leb}^n\!\upharpoonright_{D}$, for the restriction of $\mathrm{Leb}^n$ to $D$, i.e.,
$(\mathrm{Leb}^n\!\upharpoonright_{D})(A)\Let \mathrm{Leb}^n(A\cap D)$ for all $A\in\Borelsigalg(\Ree^n)$.

We equip $\polish(\Ree^n)$ with the topology of the weak convergence of measures, namely, a sequence $\{\nu_k \}_{k \ge 1}$ of probability measures in $\polish(\Ree^n)$ converges weakly \cite[Def. 6.8]{villani_ot} to $\nu\in\polish(\Ree^n)$ if and only if, for every real-valued bounded continuous function $\phi$ on $\Ree^n$
\begin{align}
 \int_{\Ree^n} \phi(x) \d \nu_k(x) \lra \int_{\Ree^n} \phi(x) \d \nu(x),\nn    
\end{align}
when $k\ra +\infty$. For any $p\ge 1$, we denote by $\polish_p(\Ree^n)$ the subspace of $\polish(\Ree^n)$ consisting of probability measures with finite moment of order $p$. If, in addition, the sequence of probability measures $\{\nu_k\}_{k \ge 1}$ is $p$-uniformly integrable, then the weak convergence $\nu_k \lra \nu$ also implies the convergence in the $p$-Wasserstein distance \cite[Theorem 5.5]{prob_mean_field}, that is, $\wassdist_p(\nu_k,\nu)\ra 0$ as $k\ra +\infty$, where $\wassdist_p$ denotes the $p$-Wasserstein distance. Likewise, we denote by $\polish_{p,\rm sub}(\Ree^n)$ the subspace of $\polish_{\rm sub}(\Ree^n)$ consisting of sub-probability measures with finite moment of order $p$.

$C^k(\Ree^n)$ denotes the space of $k$-times continuously differentiable functions on $\Ree^n$, while $C^{k,l}(\Ree^n\times\Ree^m)$ denotes the space of functions that are $k$-times continuously differentiable with respect to the first argument, and $l$-times continuously differentiable with respect to the second argument on $\Ree^n \times \Ree^m$. Furthermore, $C_{\rm loc}^k(\Ree^n)$ denotes the space of functions whose restriction to every compact subset of $\Ree^n$ is $k$-times continuously differentiable. 
Moreover, $\mathscr{L}_{1}$ denotes the space of all absolutely integrable random variables, and for subsets $\mathscr{D}\subseteq \Ree^{d}$ and $\mathscr{R}\subseteq \Ree^{r}$ with \(d,r \in \N\), we write $\mathscr{L}_{\infty}(\mathscr{D};\mathscr{R})$ for the space of all measurable, essentially bounded functions from $\mathscr{D}$ to $\mathscr{R}$.

For any differentiable function $f:\Ree^{n}\ra\Ree$ with arguments $(x_1,\dots,x_n)$ and any $y=(y_1,\dots,y_n)\in\Ree^{n}$, we write
\begin{align}
 \frac{\partial}{\partial { x}_i}{f}(y_1,\dots,y_n)
\Let \left.\frac{\partial {f}(x_1,\dots,x_n)}{\partial x_i}\right|_{(x_1,\dots,x_n)=(y_1,\dots,y_n)}.\nn   
\end{align}
For a differentiable \emph{functional} $\mathscr{J}: \Ree_{\ge 0} \times \polish(\Ree^n) \lra \Ree $, we denote by $\frac{\delta \mathscr{J}}{\delta \mu}(t,\nu)(x)$ the \emph{functional} derivative of $\mathscr{J}$ at $(t,\nu,x)\in(\Ree_{\ge 0},\polish(\Ree^n),\Ree^n)$, 
that is, for any $\nu, \tilde \nu\in \polish(\Ree^n)$, the directional derivative \cite[Chapter 1]{prob_mean_field} satisfies 
\begin{align*}
    &\lim_{\eps\lra 0} \frac{\mathscr{J}(t,(1-\eps)\nu + \eps \tilde \nu)-\mathscr{J}(t,\nu)}{\eps} = \int_{\Ree^n} \frac{\delta \mathscr{J}}{\delta \mu}(t,\nu,x) \d(\tilde \nu-\nu)(x).
\end{align*}
Here, $\frac{\delta \mathscr{J}}{\delta \mu}$ is defined up to an additive constant, that is, we adopt the normalization such that $\int_{\Ree^n} \frac{\delta \mathscr{J}}{\delta \mu}(t,\nu,x) \d \nu(x)=0$.
Moreover, the functional derivative is related to the Lions (L)--derivative 
$\partial_\mu \mathscr{J}$
by \cite[Proposition 5.48]{prob_mean_field} $\partial_\mu \mathscr{J}(t,\nu)(x) \Let \nabla_x \Bigl(\frac{\delta \mathscr{J}}{\delta \mu}(t,\nu)(x)\Bigr)$, assuming the latter
is jointly continuous in $(\nu,x)$, has at most a linear growth in $x$, uniformly in $\nu\in \mathscr{P}$ for any bounded subset $\mathscr{P}\subset\polish_2(\Ree^n)$.

We consider a complete probability space $(\Omega, \filtration, \mathbb{P})$, where $\Omega$ is the sample space, $\omega \in \Omega$ denotes a sample point, $\filtration$ is a $\sigma$-algebra of subsets of $\Omega$, $\mathbb{P} : \filtration \lra [0,1]$ is a probability measure, satisfying $\mathbb{P}(\Omega) = 1$, with all $\mathbb{P} $-null subsets of $\Omega$ included in $\filtration$. 
A stochastic process $\{x_t\}_{t\ge0}$ taking values in $\Ree^n$ is said to be $\filtration_t$-adapted if, for each $t\ge0$, the random variable $x_t$ is $\filtration_t$-measurable. For any $k\in \N$, we denote by $\mathscr{L}_{1}(\Omega,\filtration,\mathbb P;\mathbb R^k)$, or simply $\mathscr{L}_{1}$ when the context is clear,
the space of all $\filtration$-measurable random vectors $Z:\Omega\lra\mathbb R^k$ such that
\begin{align*}
    \mathbb E^{\mathbb P}[\|Z\|] = \int_{\Omega} \|Z(\omega)\|\,\mathbb P(\mathrm d\omega) < +\infty,
\end{align*}
where $\|\cdot\|$ denotes the Euclidean norm, and $\mathbb E^{\mathbb P}[Z] \Let \int_{\Omega} Z(\omega)\,\mathbb P(\mathrm d\omega)$.

Let $\mu_t$ denote the probability measure of $x_t$ under $\mathbb P$ at time $t\ge 0$. We fix a regular conditional probability of $\mathbb P$ given $x_t$, i.e., a probability kernel
$\{\mathbb P^{t,x}\}_{x\in\mathbb R^n}$ on $(\Omega,\filtration)$, denoting $\mathbb P(\cdot|x_t=x)$ (defined $\mu_t$-a.e.\ in $x$), such that, for each $A\in\filtration$,
the map $x\mapsto \mathbb P^{t,x}(A)$ is Borel measurable, and for all $A\in\filtration$ and
$B\in\mathcal B(\mathbb R^n)$,
\begin{align*}
    \mathbb P\big(A\cap \{x_t\in B\}\big) = \int_{B}\mathbb P^{t,x}(A)\, \d \mu_t(x).
\end{align*}
Throughout this article, 
we denote a probability measure with a time index, e.g., $\mu_t$, whenever the probability law evolves in time, whereas $\mu$ denotes a generic probability measure, i.e., when no explicit temporal dependence is required. We use ${\rm S}_\mu \Let \supp(\mu)$ to denote the support of $\mu$, and when there is a time dependency, we use the notation ${\rm S}_{\mu_t}\Let \supp(\mu_t)$, instead. 

We are now ready to proceed to the setting that will be the main focus of this work.


\subsection{Background Theory}\label{sect:back_theory}
We consider a controlled stochastic system governed by the following dynamics
\begin{align}\label{eqn1}
\d x_t = f(t, x_t, u_t) \, \d t + \sigma_t \, \d w_t, \quad \text{for all }t \ge 0,
\end{align}
with the following data:
\begin{enumerate}[label={\textup{(\eqref{eqn1}-\alph*)}}, leftmargin=*, widest=b, align=left]
\item \label{sde:data:1} $x_t\in\Ree^{n}$ denotes the state process, $u_t\in\Ree^{m}$ the control process, $w_t$ an $n$-dimensional Brownian motion, and $\sigma_t\in\Ree^{n\times n}$ a deterministic time-varying diffusion matrix, all evaluated at time $t\in\Ree_{\ge 0}$;
\item \label{sde:data:2} the mapping $f:\Ree_{\ge 0} \times \mathbb{R}^n \times \mathbb{R}^m\lra \mathbb{R}^n$ is a deterministic drift function. We denote by $\{\filtration_{t} \}_{t\ge 0}$ the filtration generated by $w_t$ and we assume that, for any $t$, the map $t \mapsto \sigma_t \sigma_t \t$ is uniformly elliptic, i.e., $\sigma_t \sigma_t\t \succ 0$ for all \(t\ge 0\).
\end{enumerate}

\begin{assumption}[Regularity of the dynamics] \label{regularity_dyn}
The drift function $f$ is continuously differentiable in all its arguments with bounded first derivatives, and the diffusion matrix $\sigma$ is bounded and continuous. Moreover, there exists a constant $\ell>0$ and a modulus of continuity $\Gamma:[0,+\infty)\to[0,+\infty)$ such that, for all $t\in\mathbb{R}_{\ge 0}$, $x,\hat x\in\mathbb{R}^n$, and $u,\hat u\in\mathbb{R}^m$, the following hold:
\begin{align}
\|f(t,x,u)-f(t,\hat x,\hat u)\|
    &\le \ell \|x-\hat x\|+\Gamma(\|u-\hat u\|), \tag{(A1)} \label{A1}\\
\|f(t,0,u)\| &\le \ell, \tag{(A2)} \label{A2} \\
\|\sigma_t\| &\le \ell. \tag{(A3)}\label{A3}
\end{align}
\end{assumption}

Under the assumptions \eqref{A1}-\eqref{A3}, the SDE \eqref{eqn1} admits a unique strong solution \cite[Ch.~1, Sec.~6.4]{book_sc} with continuous sample paths. 

\begin{remark}\label{rem:on:dynamics:diffusion}
Observe that the diffusion coefficient may be taken in the more general state and control dependent form $(t,x,u)\mapsto \sigma_t(x,u)$ and the uniform ellipticity condition maybe relaxed. Nonetheless, to streamline the exposition and keep the subsequent derivations transparent, we restrict attention to the simpler time-dependent only case $\sigma_t$.
\end{remark}

\textbf{Some important function spaces and sets:} 
We define the set of admissible controls and (space-time) boundary mappings
\begin{align} \label{backgrd:mapping:sets}
\Pi \Let& \aset[\big]{\pi\in \mathscr{L}_\infty(\Ree^n; \Ree^m) \suchthat \pi \text{ measurable}},\nn \\
\admcont \Let& \left\{u_t \Let \pi\in  \mathscr{L}_\infty(\Ree^n; \Ree^m) \;\middle\vert\;  
    \begin{array}{@{}l@{}}
         \pi\ 
\text{measurable}, \{u_t\}_{t\ge 0} \ \text{is progressively} \\ \text{measurable w.r.t.} \ \{\filtration_t \}_{ t \ge 0}
        \end{array}
        \right\}, \nn \\
\admbdr \Let& \left\{ g:\Ree_{\ge0}\times\Ree^n\lra\Ree \;\middle\vert\;  
\begin{array}{@{}l@{}}
\text{for each \(x\), the map }t \mapsto g(t,x) \text{ is continuous},\\
\text{and (locally) Lipschitz continuous in } x, \text{ uniformly in } t 
        \end{array}
        \right\}.
\end{align}

Given a stopping boundary $g\in\admbdr$, we define the associated $\filtration_t$-adapted stopping time (e.g., \cite[Chapter 1.3.]{book_sc}) by
\begin{align}
\tau_g \Let \inf \aset[\big]{t\ge0 \suchthat g(t,x_t) \ge 0},
\end{align}
with the convention that $\tau_g=+\infty$ on the event $\aset[]{g(t,x_t)<0 \text{ for all }t\ge 0}$. That is, $\tau_g$ is the first hitting time of the stochastic process $\{x_t\}_{t\ge 0}$ to the stopping boundary in space--time determined by $g\in \admbdr$.

We define the \emph{continuation (alive)} region by
\begin{align}\label{backgrd:alive:region}
\mathcal{D}^g \Let \aset[]{(t,x)\in \Ree_{\ge 0}\times\Ree^n \suchthat g(t,x)<0 } .
\end{align}
In other words, the stochastic process $\{x_t\}_{t\ge 0}$ continues to evolve as long as $g(t,x_t)<0$, equivalently, as long as $(t,x_t)\in \mathcal{D}^g$, and it is stopped whenever the process exits the continuation region $\mathcal{D}^g$.
We also consider the spatial continuation set at time $t\in\Ree_{\ge 0}$, denoted by $\mathcal D_t^g$, as well as the absorption boundary of the continuation region, denoted by $\partial\mathcal D^{g}$, and its time-slice boundary $\partial\mathcal D_t^{g}$. These are defined by
\begin{align}\label{backgrd:boundary:sets}
    &\mathcal D_t^g \Let \aset[]{x\in\Ree^n \suchthat g(t,x)<0}, \nn \\
    \partial&\mathcal D_t^g \Let \aset[]{x\in\Ree^n \suchthat g(t,x)=0}, \nn \\
    \partial&\mathcal D^g \Let \aset[]{(t,x)\in \Ree_{\ge0}\times \Ree^n \suchthat  g(t,x)=0}.
\end{align}
Moreover, since $t \mapsto x_t$ is continuous a.s.\ and $(t,x) \mapsto g(t,x)$ is continuous, the map $t\mapsto g\bigl(t,x_t\bigr)$ is continuous a.s.\ Hence, whenever $\tau_g<+\infty$, we have
$g\bigl(\tau_g,x_{\tau_g}\bigr)=0$, i.e.\ $(\tau_g,x_{\tau_g})\in \partial \mathcal{D}^g$
(or equivalently $x_{\tau_g}\in \partial \mathcal{D}^g_{\tau_g}$).

We employ two notations for conditional expectation to distinguish whether the initial condition is deterministic or random with a given initial probability measure. The notation $\mathbb{E}^{\filtration_t}_{x}[\cdot]$ is used in the pointwise setting, corresponding to a deterministic initial condition, in Subsection \ref{subs_3_2}, whereas $\mathbb{E}^{\filtration_t}_{x \sim \mu}[\cdot]$ is used in the distributional setting developed later in the same subsection.
\begin{enumerate}[label={\textup{(\ensuremath{\mathsf{E}}-\alph*)}}, leftmargin=*, widest=b, align=left]
    \item \textit{Pointwise expectation.} For a fixed initial condition $x \in \Ree^n$ at time $t$, we define the conditional expectation over trajectories starting from $x_t \Let x$ under the filtration $\filtration_t$ by
\begin{align}\label{backgrd:pw:expect}
        \mathbb{E}^{\filtration_t}_{x}[\cdot] \Let \mathbb{E}^{\mathbb{P}^{t,x}}\bigl[\cdot \mid \filtration_t\bigr].
    \end{align}
    That is, given a stochastic process $\{x_s\}_{s \ge t}$, the filtration $\filtration_t$ generated by the process  $\{x_s\}_{s \ge t}$,
    and a random variable $Z \in \mathscr{L}_1$, we interpret $\mathbb{E}^{\mathbb{P}^{t,x}}[Z\mid \filtration_t]$ as $\filtration_t$-measurable random variable $Y$ (unique $\mathbb{P}^{t,x}$-a.s.) such that \cite[Proposition 1.10]{book_sc}
    \begin{align*}
    \int_{A} Y(\omega)\,\mathbb{P}^{t,x}(\mathrm{d}\omega)
    =
    \int_{A} Z(\omega)\,\mathbb{P}^{t,x}(\mathrm{d}\omega)
    \quad \text{for all }A\in\filtration_t.
    \end{align*}
    The notation $\mathbb{E}_x^{\filtration_t}[\,\cdot\,]$ emphasizes that state trajectories are initialized from the deterministic condition $x_t = x$.

    \item \textit{Distributional expectation.} When the initial state is a random variable with the given initial probability measure $\mu \in \polish(\Ree^n)$, we define the conditional expectation over trajectories where $x_t \sim \mu$, conditioned on the filtration $\filtration_t$ by
\begin{align}\label{backgrd:dist:expect}
        \mathbb{E}^{\filtration_t}_{x \sim \mu}[\,\cdot\,] \Let  \int_{{\rm S}_{\mu}} \mathbb{E}^{\filtration_t}_{x} \left[\,\cdot\,\right] \d \mu (x).
    \end{align}
    By abuse of notation, for $\mu \in \polish_{\rm sub}(\Ree^n)$, we still write $x\sim\mu$. This should be understood as notation for integration with respect to the sub-probability measure $\mu$.
\end{enumerate}

\begin{remark} \label{backgrd:det_PDE:expect}
When $x_0 \sim \mu_0$, we denote the expectation over the given initial probability measure $\mu_0$ under the filtration $\filtration_0$ by $\mathbb{E}^{\filtration_0}_{x \sim \mu_0}[\,\cdot\,]$. 
When we work with deterministic dynamics or PDEs, we use $\mathbb{E}_{t,x \sim \mu}[\,\cdot\,]$ to denote the integration over a given probability measure $\mu\in \polish(\Ree^n)$ at time $t$, i.e., the probability measure of $x$ at time $t$ is $\mu$. Whenever we invoke $\wassdist_2$-based arguments or Lions derivatives, we implicitly restrict attention to measures $\mu \in \polish_2(\Ree^n)$.
\end{remark}

\section{The central problem}\label{sec:problem}

\subsection{Problem Formulation}\label{subsec:prob:formulation}
Let $\{x_t\}_{t\ge0}$ denote the stochastic process with a random initial condition, $x\sim \mu_0\in \polish_2(\Ree^n)$. Unless otherwise stated, we always work with absolutely continuous initial probability measures with respect to the Lebesgue measure. Due to the presence of a stopping boundary, we work with the alive component of the full law (sub-probability measure) of $x_t$ on the continuation region $\mathcal{D}^g$ with respect to the Lebesgue measure, i.e., the alive component is absolutely continuous with respect to the restriction of the Lebesgue measure to $\mathcal{D}^g$. We refer the interested reader to \cite[\S0.6, \S10.4]{book_dynkin} for a classical treatment of subprocesses of Markov processes and the associated sub-probability measures.

We state our primary problem now.

\begin{mdframed}[backgroundcolor=black!10,skipabove=4pt, linewidth=1pt, innertopmargin=4pt]
\highlight{Problem (P0):} The objective is to minimize the expected value of a Bolza-type cost functional consisting of a standard Lagrangian and a Mayer term
\begin{align} \label{eqn2}
\min_{\substack{g \in \mathcal{G}, u\in \mathcal{U}}} \; \mathbb{E}^{\mathcal{F}_0}_{x\sim\mu_0} \left[ \int_0^{\tau_g} L(t, x_t, u_t(x_t)) \, \d t + \Phi(\tau_g, x_{\tau_g}) \right],
\end{align}
where $L:\mathbb{R}_{\ge 0}\times \mathbb{R}^n \times \Ree^m \lra \mathbb{R}$ represents the running (Lagrangian) cost, $\tau_g$ is the stopping time corresponding to the boundary $g\in \mathcal{G}$, the state $\{x_t\}_{t \ge 0}$ evolves according to the stochastic dynamics \eqref{eqn1}, $x_{\tau_g}$ is the stopped state, and $ \Phi: \mathbb{R}_{\ge 0}\times \mathbb{R}^n \lra \mathbb{R} $ is the terminal (Mayer) cost at $ {\tau_g} $. 
The control $u\in \mathcal{U}$ has to be chosen to minimize the total cost \eqref{eqn2} up to the stopping time ${\tau_g}$. 
\end{mdframed}
We make the following standing regularity assumptions for well-posedness, which will always be enforced throughout the paper.

\begin{assumption}[Regularity of the cost] \label{regularity_cost}
The maps $L,\Phi$ are continuously differentiable in all their arguments with bounded first derivatives, and there exists a constant $\ell>0$ and a modulus of continuity $\Gamma:[0,+\infty) \ra [0,+\infty)$ such that, for all $t\in \Ree_{\ge0}$, $x, \hat x\in \mathbb{R}^n$, $u, \hat u\in \mathbb{R}^m$, the following hold:
\begin{align}
    &|L(t,x,u)-L(t,\hat x,\hat u) | \le \ell \|x-\hat x\|+\Gamma(\| u-\hat u\|), \tag{(A4)}\label{A4}\\
    &|L(t,0,u)|\le \ell \tag{(A5)}\label{A5}, \\
    &|\Phi(t,x)-\Phi(t,\hat x)|\le \ell\|x-\hat x\|. \tag{(A6)}\label{A6}
\end{align}
\end{assumption}
Under the assumptions \eqref{A1}-\eqref{A6}, the cost functional \eqref{eqn2} is well-defined  \cite[Ch.~3, Sec.~3]{book_sc}.

\begin{remark}[Regularity of the Optimal Stopping Boundary]\label{reg_opt_bound}
    In this paper, we do not address the regularity of the optimal stopping boundary; free-boundary regularity in optimal stopping literature is complex and heavily problem dependent, and remains an unsolved problem in the general setting. Throughout, we fix an admissible class $\mathcal{G}$ of stopping boundaries and assume that there exists an optimal pair $(g^*,u^*)\in \mathcal{G} \times \mathcal{U}$. All the results in the following sections provide necessary conditions for any such optimal boundary $g^* \in \mathcal{G}$.
\end{remark}


\subsection{Sub-probability Measures} \label{subs:subprob}
To distinguish the absorbed states after exit, we augment an absorbing state $\dagger\notin \Ree^n$ (sometimes called a \emph{cemetery state}), and define the extended state space $\mathbb{X} \Let  \Ree^n \cup\{\dagger\} $, with $\dagger$ being a singleton that is disjoint from $\mathbb{R}^n$. Define the \emph{killed process} $\{\tilde x_t\}_{t\ge 0}$ by
\begin{align} \label{killed_process}
    \tilde x_t \Let x_t \indic{\{t<\tau_g\}} + \dagger\,\indic{\{t\ge \tau_g\}}.
\end{align}
We will use $\mu_t$ to denote the full probability measure of $\tilde x_t$ on $\mathbb{X}$. Consequently, it is possible to decompose the full probability measure of the process $\tilde x_t$ into an \emph{alive part} supported on $\mathcal{D}_t^g$ and a \emph{cemetery part} concentrated at the cemetery state $\dagger$. We are primarily interested in the alive marginal $\mu_t^a$ (where the superscript `$a$' stands for alive) on $\mathcal{D}_t^g$, which is defined as the restriction of the full probability measure $\mu_t$ to the surviving states that have not been stopped until the time $t\ge 0$, i.e., for every Borel set $B\subseteq\mathcal{D}_t^g$,
\begin{align*}
    \mu_t^{a}(B) \Let \mathbb{P}(x_t\in B,  t<\tau_g).
\end{align*}
Consequently, $\mu_t^a$ is a sub-probability measure on $\mathcal{D}_t^g$, with total sub-probability mass $\mu_t^{a}(\mathcal{D}_t^g)=\mathbb{P}(\tau_g>t)$, i.e., the probability of survival up to the time $t\ge 0$. Unless otherwise stated, we impose a standing regularity hypothesis that, $\mu_t^{a}\ll \mathrm{Leb}^n \upharpoonright_{\mathcal{D}_t^g}$, holds for all $t\ge 0$. 
Therefore, by the Radon--Nikodym theorem (e.g., see \cite[Theorem 1.30]{evans_measure}), there exists a (unique up to a.e. equality) sub-probability density, denoted by $\rho^a(t,\cdot)$, such that,
\begin{align*}
\mu_t^a(A)=\int_A \rho^a(t,x)\, \d x \quad \text{for all }A\in \Borelsigalg(\mathcal{D}_t^g).
\end{align*} 
With the cemetery-state convention, the process is sent to the state $\dagger$ after it exits the region $\mathcal{D}^g$, and as a result, the absorbed mass at time $t$ equals $\mathbb{P}(\tau_g\le t) \, \delta_\dagger$ (with $\delta_\dagger$ denoting the Dirac measure at $\dagger$).

\begin{remark}[Induced $W_p$ topology for sub-probability measures]
\label{rem_wp_subp}
For any $\mu \in \mathcal P_{p,\rm{sub}}(\Ree^n)$, define the associated probability
measure $\mu^\dagger$ on the extended state space $\mathbb{X}$ by
\begin{align*}
    \mu^\dagger(A)
    \Let
    \mu(A \cap \Ree^n)
    +
    \left(1-\mu(\Ree^n)\right)\delta_\dagger(A)
    \quad \text{ for all } A \in \mathcal B(\mathbb X).
\end{align*}
Whenever a $\mathcal{W}_p$-type topology on $\mathcal P_{p,\rm{sub}}(\Ree^n)$ is invoked, we mean the topology induced by this definition, namely,
\begin{align*}
    \mu_k \lra  \mu \text{ in }\mathcal{P}_{p,\rm sub}(\Ree^n) \iff \mathcal{W}_p(\mu_k^\dagger,\mu^\dagger) \lra 0.
\end{align*}
This topology is used only to topologize sub-probability measures, the value function itself remains defined on the alive sub-probability measures on $\Ree^n$.
\end{remark}

Finally, to characterize when and where the stopping occurs, we define the absorbed (stopped) distribution. Let ${\rm E}$ be any Borel set ${\rm E} \subseteq \partial\mathcal{D}^g$. 
Then, the cumulative total exit mass through ${\rm E}$, up until time $t\ge 0$ can be written as
\begin{align*}
\tilde\rho(t,{\rm E}) \Let \mathbb{P}\big(\tau_g\le t, (\tau_g,x_{\tau_g})\in {\rm E} \big),
\end{align*}
where $\tilde\rho(t,\partial\mathcal{D}^g)=\mathbb{P}(\tau_g\le t)$ is the total absorbed probability by the time $t\ge 0$. From the conservation of the total probability, we have the classical identity $\mu_t^a(\mathcal{D}_t^g)+\tilde\rho(t,\partial\mathcal{D}^g)=1$ for all $t\ge 0$. Moreover, under $g\in \mathcal{G}$, the rate of loss of the alive mass  equals the outward probability flux across $\partial\mathcal{D}^g$, i.e., $\frac{\mathrm d}{\mathrm dt}\mu_t^a(\mathcal{D}_t^g)=-\frac{\mathrm d}{\mathrm dt}\tilde\rho(t,\partial\mathcal{D}^g)$.

Assume that $\rho^a(\cdot)$ satisfies the Fokker--Planck equation in $\mathcal D^g$
\begin{align*}
\frac{\partial}{\partial t}\rho^a(t,x) = -\nabla_x\!\cdot J(t,x), \quad \text{for all } (t,x)\in\mathcal D^g,
\end{align*}
where $J:\Ree_{\ge 0}\times \Ree^n \lra \Ree^n$ is the associated probability flux density for the SDE \eqref{eqn1}. Let $S(\cdot)$ be the $(n-1)$-dimensional Hausdorff measure and $\alpha(t,x)$ be the outward probability flux density across $\partial\mathcal{D}_t^g$, defined by
\begin{align*}
\partial \mathcal{D}^g 
\ni (t,x) \mapsto \alpha(t,x) \Let \Big(J(t,x)-\rho^a(t,x)\,v_{\mathrm{out}}(t,x)\Big)\cdot \hat{n}(t,x),
\end{align*}
with $\hat{n}:\Ree_{\ge 0}\times \Ree^n \lra \Ree^n$ being the outward-pointing unit normal field on $\partial\mathcal D_t^g$ and $v_{\mathrm{out}}:\Ree_{\ge 0}\times\Ree^n \lra \Ree^n$ being the boundary velocity vector field (note that $v_{\mathrm{out}}\equiv 0$ whenever $g$ is time-invariant).
Then, applying the divergence theorem yields
\begin{align} \label{mass_lost_flux}
    \frac{\mathrm d}{\mathrm dt}\mu_t^a(\mathcal{D}_t^g)= -\int_{\partial\mathcal{D}_t^g}\alpha(t,x)\,\d S(x),
\end{align}
which immediately implies $\frac{\mathrm d}{\mathrm dt}\tilde\rho(t,\partial\mathcal{D}^g)
=
\int_{\partial\mathcal{D}_t^g}\alpha(t,x)\,\d S(x)$.

\begin{assumption}[Dirichlet Boundary Conditions for Stopping]
\label{remark_abs2}
We adopt the standard absorbing (Dirichlet) boundary condition \cite{book_stop} for the stopped process. Specifically, we define the alive probability density function such that $\d \mu^a_t=\rho^a(t,x) \d x$, for all $(t,x)\in \mathcal{D}^g$. Moreover, $\rho^a(\cdot)$ satisfies the Dirichlet boundary condition, in the trace sense, $\rho^a(t,x)=0$ for all $(t,x)\in \partial \mathcal
D^g$. 
\end{assumption}

The preceding discussion of sub-probability measures will be fundamental to the subsequent derivations. Specifically, the alive sub-probability measure $\mu_t^a$ will be used to compute the accumulated running cost by integration over the continuation region $\mathcal D^g$, and moreover, the outward flux across $\partial\mathcal D^g$ represents the rate of probability mass loss through the boundary, and will provide the basis for deriving the accumulated terminal cost.


\subsection{Stochastic Hamilton-Jacobi-Bellman (sHJB) Equation} \label{subs_3_2}

Let the value function $(t,x)\mapsto V(t,x)$ corresponding to the objective function \eqref{eqn2} with $\mu_0 \Let \delta_{x}$ (the delta distribution concentrated at $x$), i.e, conditioned on a deterministic initial condition, be defined as
\begin{align} \label{single_V}
(t,x) \mapsto V(t,x) \Let  \min_{g\in \admbdr,u\in \admcon} \;\mathbb{E}_x^{\mathcal{F}_t}\left[\int_t^{\tau_g} L(s,x_s,u_s(x_s)) \, \d s + \Phi({\tau_g},x_{\tau_g})\right],
\end{align}
and consider the following partial differential equation, known as the \textit{stochastic} Hamilton-Jacobi-Bellman (sHJB) equation~\cite{book_sc}, associated with Problem \highlight{(P0)} and \eqref{single_V}
\begin{align} \label{eqn:value_func}
    \min\limits_{u\in \Ree^m} \aset[\big]{\mathcal{L}_u V(t,x)+ L(t,x,u)} = 0,
\end{align}
over the domain $\mathcal{D}^g$, with terminal condition $V(\tau_g,x_s)=\Phi(\tau_g,x_s)$ for all $ (\tau_g,x_s)\in\partial \mathcal{D}^g$. Unless stated otherwise, \eqref{eqn:value_func} is understood in the distributional sense, and in \eqref{eqn:value_func}, for a smooth test function $\varphi\in C^{1,2}$, the controlled second-order generator $\mathcal{L}_u$ for the SDE \eqref{eqn1}, is defined as~\cite{book_sc}
\begin{align} \label{sHJB-generator}
\mathcal{L}_u \varphi(t,x) &\Let \frac{\partial }{\partial t} \varphi(t,x) + f(t,x,u)^{\t} \frac{\partial }{\partial x}\varphi(t,x) \nonumber \\
&\hspace{0 em}+ \half \trace\Big( \sigma_t \sigma_t\t \frac{\partial^2}{\partial x^2}\varphi(t,x) \Big), \quad \text{for } u\in \Ree^m,(t,x)\in \mathcal
D^g.
\end{align}
Let $(g^*,u^*)\in \mathcal{G} \times \mathcal{U}$ be an optimizer pair for the problem \eqref{single_V}. 
Then, for all $(t,x) \in \Ree_{\ge 0} \times \Ree^n$, assuming sufficient regularity conditions, i.e., \eqref{A1}-\eqref{A6}, the value function \eqref{single_V} satisfies the following Variational Inequalities \cite{book_stop},
\begin{subequations}
\begin{align}
\mathcal{L}_{u_t^*(x)} V(t,x)+L(t,x,u_t^*(x)) \ge 0, \quad & \text{if } g^*(t,x)\ge0, \tag{(11-a)} \label{eq:HJBforStopping0} \\ 
\mathcal{L}_{u_t^*(x)}  V(t,x)+L(t,x,u_t^*(x)) =0, \quad& \text{if } g^*(t,x) <0, \tag{(11-b)}\label{eq:HJBforContinuation}\\
V(t,x)=\Phi(t,x), \quad& \text{if } g^*(t,x) =0 \tag{(11-c)}\label{eq:HJBforStopping},
\end{align}
\end{subequations}
where \eqref{eq:HJBforStopping0}-\eqref{eq:HJBforStopping} are understood in the weak sense, unless additional regularity is imposed.

Moreover, with a given stopping boundary $g\in\mathcal{G}$, the fixed-boundary value function $(t,x) \mapsto V^g(t,x)$ is defined by
\begin{align} \label{single_Vg}
    V^g(t,x) \Let \min_{u\in \mathcal{U}} \;\mathbb{E}_x^{\mathcal{F}_t}\left[\int_t^{\tau_g} L(s,x_s,u_s(x_s)) \, \d s + \Phi({\tau_g},x_{\tau_g}) \right].
\end{align}
It is well known that, for all $(t,x)\in \mathcal{D}^g$ both $V(t,x)$ and $V^g(t,x)$ satisfy \eqref{eqn:value_func}; see, e.g., \cite{book_stop} for more details. We define a couple of more crucial objects for our subsequent analysis. 


\begin{definition} \label{Def:dist}
Recalling the continuation set $\mathcal{D}^g$ in \eqref{backgrd:alive:region}, we define the \emph{distributional continuation region} 
\begin{align} \label{cntr:dist_alive:region}
    \bar{\mathcal{D}}^g \Let \aset[\big]{(t,\mu) \in \Ree_{\ge 0} \times \mathcal{P}_{2,\rm sub}(\Ree^n), \,  x\in \operatorname{supp}(\mu) \suchthat (t,x) \in \mathcal{D}^g}.
\end{align}
Then, the \emph{distributional value functions} $\bar V(\cdot)$ and $\bar V^g(\cdot)$ for the objective function in \eqref{eqn2} are defined by
\begin{align} 
&\bar{\mathcal{D}}^g \ni (t,\mu)\mapsto \bar{V}^g(t,\mu) \Let \min_{u\in \mathcal{U}} \mathbb{E}^{\mathcal{F}_t}_{x\sim\mu} \left[ \int_t^{\tau_g} L(s, x_s, u_s(x_s)) \, \d s + \Phi({\tau_g}, x_{\tau_g})  \right], \label{eqn3}\\
&\bar V(t,\mu) \Let \min_{g\in\mathcal{G}} \bar V^g(t,\mu). \label{eqn5}
\end{align}
\end{definition}


\begin{definition}\label{def:test}
We define the test functional class $C^{1,2}_2$ as follows. A functional $\Xi:\Ree_{\ge 0} \times \mathcal{P}_{2,\rm sub}(\Ree^n)\lra\Ree $ belongs to the class $C^{1,2}_2$, if:
\begin{enumerate}[label={\textup{(\eqref{def:test}-\alph*)}}, leftmargin=*, widest=b, align=left]
\item $\frac{\partial}{\partial t} \Xi$ exists and is continuous in all variables;
\item the linear functional derivative $\frac{\delta \Xi}{\delta \mu}$, and its derivatives, $\frac{\partial}{\partial x}\bigl(\frac{\delta \Xi}{\delta \mu}\bigr)$,
$\frac{\partial^2}{\partial x^2}\bigl(\frac{\delta \Xi}{\delta \mu}\bigr)$ exist, and are continuous in all variables;
\item $\frac{\partial^2}{\partial x^2}\bigl(\frac{\delta \Xi}{\delta \mu}\bigr)$ is bounded in $x$, locally uniformly in $(t,\mu)\in \Ree_{\ge 0}\times \mathcal{P}_{2,\rm sub}(\Ree^n)$.
\end{enumerate}
\end{definition}

\begin{assumption}[Viscosity Regularity of the Value Function] \label{regularity_val} Let $g\in \mathcal{G}$ and consider the distributional value functions $\bar V^g,\bar V:\Ree_{\ge 0} \times \mathcal{P}_{2,\rm sub}(\Ree^n)\lra  \Ree$ defined in \eqref{eqn3} and \eqref{eqn5}, respectively. We assume the following:

\begin{enumerate}[label={\textup{(\eqref{regularity_val}-\alph*)}}, leftmargin=*, widest=b, align=left]
\item $\bar V^g$ and $\bar V$ are locally bounded on its domain, and is upper semicontinuous with respect to its arguments under the induced $\mathcal{W}_2$ topology on $\mathcal P_{2,\rm{sub}}(\Ree^n)$
introduced in Remark~\ref{rem_wp_subp}.

\item $\bar V^g$ and $\bar V$ satisfy the associated distributional sHJB/variational inequality in the viscosity sense (see, e.g.,~\cite{vis_comp}), with the test functional class $C^{1,2}_2$ specified in Definition \ref{def:test}. In other words, whenever derivatives of $\bar V^g$ and $\bar V$  appear, they are understood through smooth test functionals $\Xi \in C^{1,2}_2$.
\end{enumerate}
\end{assumption}

A few remarks are in order. 


\begin{remark} \label{remark:vis_sense}
All applications of It\^o's formula (see, e.g., \cite[Thm.~4.2.1]{book1}), integration by parts, and Stein-type identities in the rest of the article are carried out on the corresponding smooth test functional $\Xi\in C^{1,2}_2$.
\end{remark}

\begin{remark}
Assumption~\ref{regularity_val} is standard in the stochastic optimal stopping setting and only assumes that $\bar V$ is a viscosity solution, and in particular, it does not imply that $\bar V\in C^{1,2}_2$, even on the continuation region $\bar{\mathcal{D}}^{g^*}$, associated with an optimal stopping rule $g^*\in\arg\min_{g\in\mathcal G}\bar V^g(\cdot)$ (when a minimizer exists).
However, under stronger regularity conditions \cite[Lemma 3.7]{vis_comp}, the unstopped distributional value functional is as regular as the test class. Consequently, on $\bar{\mathcal D}^{g^*}$, $\bar V$ is of class $C^{1,2}_2$, and can be interpreted as a classical solution \cite[Theorem 3.4]{vis_comp} in that region.
\end{remark}

Throughout this work, we use the following boundary conditions on an unbounded domain and the stopping boundary $g\in \mathcal{G}$.

\begin{remark}[Vanishing Tail Conditions]\label{remark_abs}
    Let the density of the initial probability measure $\mu_0 \in \mathcal{P}(\Ree^n)$ be denoted by $\rho_0 \in C^1(\Ree^n)$, 
    that is, $\d \mu_0 (x) = \rho_0 (x) \d x$.
    Then for uniformly elliptic systems, i.e., for the case $\sigma_t \sigma_t \t \succ 0$, and under no stopping conditions, the state distribution $\mu_t$ is fully supported for all $t>0$, i.e., ${\rm S}_{\mu_t} = \Ree^n$, even if $\mu_0$ is compactly supported. Furthermore, $\lim_{\lVert x\rVert\to +\infty} \rho(t,x) =\lim_{\lVert x\rVert\to +\infty} \nabla_{x_i} \rho(t,x) =  0$, for all $t > 0$ and $i\in{1,\dots,n}$ \cite[Theorem 4.1]{Pavliotis}. 
    This is a direct result of two-sided Gaussian estimates proven by Aronson \cite{aronson}, \cite[Theorem 7]{aronson2} for the transition density of uniformly parabolic operators and their derivatives. 
    For recent results on general coefficients, we refer the reader to \cite{modern_bound}.
\end{remark}
Accordingly, whenever needed in the following derivations, we impose the vanishing-tail conditions from Remark \ref{remark_abs} 
so that the boundary terms at infinity vanish. These conditions are invoked to justify the cancellation of boundary terms when the divergence theorem is applied on unbounded domains. 
\section{Main results}\label{sec:main_result}
We begin with one of our main results concerning the distributional stochastic HJB equations~\eqref{thm_4_1_decomp} and \eqref{eqn4}.

\subsection{Distributional Stochastic HJB Equation}\label{subsec:main:results:Dist:sHJB}
Using Definition~\ref{Def:dist} for the \textit{distributional} value functions $\bar V^g,\bar V$, and Remarks \ref{remark_abs} and Assumption \ref{remark_abs2}, we derive the \emph{distributional} version of the stochastic HJB and the decomposition result in the next Theorem.
\begin{theorem} \label{thm:disint_sHJB}
    Consider Problem \textup{\highlight{(P0)}} and let Assumptions \ref{regularity_dyn}, \ref{regularity_cost}, \ref{remark_abs2} and \ref{regularity_val} hold. Then, the the associated \emph{distributional} value function defined in \eqref{eqn3} can be decomposed as
    \begin{align} \label{thm_4_1_decomp}
        &\bar V^g(t,\mu) = \mathbb{E}_{x\sim\mu} \left[V^g(t,x)  \right],
    \end{align}
     and satisfies (in the viscosity/distributional sense) the following \emph{distributional} stochastic HJB equation:
     \begin{align}
                 \frac{\partial }{\partial t} \bar V^g  (t,\mu) +  \mathbb{E}_{x\sim\mu}  & \left\{\min\limits_{u \in \Ree^m}\left[f(t,x,u)\t\frac{\partial}{\partial x} V^g(t,x) \right. \right. \nonumber \\
        &+ \left. \left.
        \trace\Big(\frac{\sigma_t\sigma_t\t }{2}\frac{\partial^2}{\partial x^2}  V^g(t,x)\Big) +L(t,x,u)\right] \right\} = 0 \label{eqn4},
     \end{align}
     for all $(t,\mu)\in\bar{\mathcal{D}}^g$.
\end{theorem}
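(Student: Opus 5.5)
The plan has two parts: first the decomposition \eqref{thm_4_1_decomp}, then \eqref{eqn4}, obtained by differentiating the decomposition along the law flow.

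\textbf{Step 1: decomposition.} The controls in $\mathcal{U}$ are feedback maps $u_s=\pi(x_s)$, and the boundary $g$ is fixed. Hence the minimization in \eqref{eqn3} can be carried out pointwise in the initial state.

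By definition \eqref{backgrd:dist:expect}, the distributional expectation is an integral over $\mu$ of the pointwise expectations $\mathbb{E}^{\mathcal{F}_t}_x[\cdot]$. Therefore, for every fixed $u\in\mathcal{U}$, the cost equals $\int J^g(t,x;u)\,\d\mu(x)$, where $J^g(t,x;u)\ge V^g(t,x)$. This gives $\bar V^g(t,\mu)\ge \mathbb{E}_{x\sim\mu}[V^g(t,x)]$.

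For the reverse inequality, take a feedback $u^*$ that is optimal in \eqref{single_Vg} for all $x$ simultaneously. This is a Markov feedback read off from the sHJB \eqref{eqn:value_func} via a measurable selection of the pointwise argmin. Since $u^*$ does not depend on the initial point, it attains $V^g(t,x)$ for every $x$. If no exact minimizer exists, use an $\varepsilon$-optimal measurable selection instead. Integrating against $\mu$ yields equality.

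Local boundedness (Assumption~\ref{regularity_val}) and the Lipschitz bounds \eqref{A1}--\eqref{A6} make $V^g$ of linear growth. Because $\mu\in\mathcal{P}_{2,\rm sub}$, all the integrals are finite.

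\textbf{Step 2: the distributional sHJB.} Apply the decomposition along the alive law $\mu_s^a$ under $u^*$, starting from $(t,\mu)\in\bar{\mathcal{D}}^g$. Dynamic programming (tower property of the conditional expectations) gives
\[
\bar V^g(t,\mu)=\mathbb{E}_{x\sim\mu}\Big[\int_t^{t+h}L\,\d s\Big]+\bar V^g(t+h,\mu^a_{t+h})+\text{(exit terms)}.
\]
The exit terms are handled as follows.
\begin{itemize}
\item Mass absorbed on $\partial\mathcal{D}^g$ contributes $\Phi$ there.
\item Since $V^g=\Phi$ on $\partial\mathcal{D}^g$ by \eqref{eq:HJBforStopping}, these contributions cancel against the boundary flux \eqref{mass_lost_flux}.
\item The Dirichlet condition of Assumption~\ref{remark_abs2} kills the density at the boundary.
\end{itemize}

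To differentiate, work on a test functional $\Xi\in C^{1,2}_2$ touching $\bar V^g$ (Remark~\ref{remark:vis_sense}), with $\frac{\delta\Xi}{\delta\mu}$ playing the role of $V^g$ as the decomposition suggests. Then use the chain rule along the Fokker--Planck flow $\partial_s\rho^a=-\nabla\cdot(f\rho^a)+\tfrac12\nabla\cdot\nabla\cdot(\sigma\sigma^\top\rho^a)$. After integrating by parts twice, the spatial terms become $\mathbb{E}_{x\sim\mu}[f^\top\partial_x V^g+\tfrac12\trace(\sigma\sigma^\top\partial_x^2V^g)]$. The boundary terms at infinity vanish by Remark~\ref{remark_abs}, and those on $\partial\mathcal{D}^g$ vanish by the Dirichlet condition together with $V^g=\Phi$.

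Dividing by $h\to0$ produces $\partial_t\bar V^g+\mathbb{E}_{x\sim\mu}[\mathcal{L}_{u^*}V^g-\partial_tV^g+L]=0$ with a consistent time derivative. Finally, since $u^*$ attains the pointwise min in \eqref{eqn:value_func} at each $x$, the minimum over $u$ can be placed inside the expectation, which gives \eqref{eqn4}.

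\textbf{Main obstacle.} The delicate step is the boundary bookkeeping: ensuring that the flux terms on the moving boundary (the $v_{\rm out}$ term in $\alpha$) cancel exactly against the Mayer contributions. This must also be done rigorously in the viscosity sense, where $V^g$ is only touched by test functionals. I would first prove the result for smooth $V^g$ on $\mathcal{D}^g$, and then pass to viscosity test functions using the standard sub/supersolution inequalities for both directions.
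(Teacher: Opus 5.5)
Your outline is sound, but it takes a genuinely different route from the paper's.

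\textbf{The paper's route.} The paper never proves \eqref{thm_4_1_decomp} directly. It writes the dynamic programming identity for $\bar V^g$ over $[t,t+h]$ for an arbitrary feedback and differentiates along the alive Fokker--Planck flow using $\frac{\delta \bar V^g}{\delta\mu}$ of a test functional. It cancels the boundary terms exactly as you describe and arrives at $\min_{u\in\Pi}\{\bar{\mathcal{L}}_u\bar V^g+\mathbb{E}_{x\sim\mu}L\}=0$. It moves the minimum inside the expectation only infinitesimally, via the Rockafellar--Wets interchange for the normal integrand $f^\top p+L$. It then recovers the decomposition a posteriori: $\mathbb{E}_{x\sim\mu}[V^g(t,x)]$ solves the same equation, and a comparison theorem on Wasserstein space gives uniqueness.

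\textbf{Your route.} You obtain the decomposition first, from the linearity of the cost in $\mu$ for a fixed Markov feedback. This makes $\bar V^g\ge\mathbb{E}_{x\sim\mu}[V^g]$ immediate. The reverse inequality then reduces to the existence of a single feedback that is ($\varepsilon$-)optimal for all initial states simultaneously.

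\textbf{What your route buys.} It is more elementary. It needs neither an a priori functional derivative of $\bar V^g$ nor a comparison principle on $\mathcal{P}_{2,\rm sub}$, and the paper does not verify that principle's applicability to the stopped setting. Moreover, once the decomposition holds, \eqref{eqn4} follows at once by integrating the pointwise sHJB \eqref{eqn:value_func} against $\mu$, since $\partial_t\bar V^g(t,\mu)=\mathbb{E}_{x\sim\mu}[\partial_t V^g(t,x)]$. Your Step~2 flow computation is therefore only a consistency check.

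\textbf{What it costs.} The universal feedback comes from a verification theorem. You therefore need:
\begin{itemize}
\item $V^g$ regular enough for It\^o's formula on $\mathcal{D}^g$ (classical, or $W^{2,1}_{p,\rm loc}$ via Krylov's formula under uniform ellipticity);
\item continuity of $V^g$ up to $\partial\mathcal{D}^g$;
\item well-posedness of the SDE under a merely measurable feedback;
\item finiteness of the error term $\varepsilon\,\mathbb{E}[\tau_g-t]$.
\end{itemize}
None of this follows from the viscosity-only Assumption~\ref{regularity_val}. Your closing plan to ``pass to viscosity test functions'' cannot supply it, because test functionals do not generate feedback laws. This regularity has to be imported separately, for instance from parabolic Schauder theory for the semilinear, uniformly parabolic HJB.
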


\begin{proof}
Let $\hat{\mu}\in \mathcal{P}_{2,\rm sub}(\Ree^n)$ denote the \emph{alive push-forward} of the distribution $\mu\in \mathcal{P}_{2,\rm sub}(\Ree^n)$ under the flow of the controlled dynamics \eqref{eqn1} with feedback control $u\in \mathcal{U}$ and stopping rule $\tau_g$ for some $g\in \mathcal{G}$ over a time increment $h$, that is, $\hat{\mu} = \Tflows{u,g}{h} \mu$,
where $\Tflow{u,g}{h}:\mathcal{P}_{2,\rm sub}(\Ree^n) \lra \mathcal{P}_{2,\rm sub}(\Ree^n)$ is the flow map induced by the dynamics \eqref{eqn1}, and $\#$ denotes the push-forward operator. In other words,
\begin{align*}
\Tflows{u,g}{h}\mu(A) = \mathbb P\bigl(x_h\in A,\tau_g>h \bigr) \quad\text{for all Borel sets } A\subset\Ree^n.  
\end{align*}
Thus, $\hat\mu$ denotes the alive sub-probability measure (cf.\ Subsection~\ref{subs:subprob}). Using the SDE \eqref{eqn1} and the Fokker--Planck equation for the stopped process yields
\begin{align} \label{FP_proof}
\frac{\partial }{\partial t}\rho^a(t,x) = - \nabla_{x} \cdot \left[f(t,x,u_t(x))\rho^a(t,x) - \nabla_x \cdot \left(\frac{\sigma_t\sigma_t\t}{2} \rho^a(t,x)\right) \right],
\end{align} 
for all $(t,x) \in \mathcal D^g$ with the Dirichlet boundary condition $\rho^a(t,x)=0$ for all $(t,x)\in \partial\mathcal D^g$, where $\rho^a$ denotes the alive probability density of the probability measure $\mu^a_t$; see Subsection \ref{subs:subprob}.

From the definition of $\bar{V}^g$ in \eqref{eqn3}, for some small time increment $h > 0$, it follows that 
\begin{align} 
\bar V^g(t,\mu)  & = \min  \limits_{u \in \mathcal U} \left \{ \mathbb{E}^{\mathcal{F}_t}_{x\sim\mu} \int_t^{t+h} L(s,x_s,u_s(x_s)) \, \d s \, + \Phi({\tau_g},x_{\tau_g}) \indic{\tau_g \le t+h} \right. \nonumber \\ 
    & \left. + \  \mathbb{E}^{\mathcal{F}_{t+h}}_{x\sim \hat{\mu}} \int_{t+h}^{\tau_g} L(s,x_s,u_s(x_s)) \, \d s \, + \Phi({\tau_g},x_{\tau_g}) \indic{\tau_g > t+h}  \right\}, \nonumber \\
    &\hspace{-1em}=\min\limits_{u \in \mathcal U} \mathbb{E}^{\mathcal{F}_t}_{x\sim\mu} \int_t^{t+h} L(s,x_s,u_s(x_s)) \, \d s +  \bar \Phi^g(t+h,\hat \mu)  -\bar \Phi^g(t,\mu)  +\bar V^g(t+h,\hat \mu), \label{bellman_eq1}
\end{align}
where $\bar \Phi^g(t,\mu) \Let \mathbb{E}_{x\sim \mu}^{\mathcal{F}_t} [\Phi(t,x) \indic{\tau_g \le t}]$ is the accumulated terminal cost at time $t$. 

Under the standing assumptions \eqref{A1}--\eqref{A6}, Remark \ref{reg_opt_bound}, the probability mass flux analysis from \eqref{mass_lost_flux} yields, a.e.
\begin{align} 
    &\frac{\d }{\d t}\bar \Phi^g(t,\mu) = \int_{\partial \, {\rm S}^a_{\mu}} \left\{\Phi (t,x)   [{\rm J}_{\rm out}(t,x,u_t(x))-\rho^a(t,x){\rm v}_{\rm out}(t,x) ]\cdot \hat n(t,x) \right \} \d S(x), \\
&\bar \Phi^g(t+h,\hat \mu)  -\bar \Phi^g(t,\mu) = \int_t^{t+h} \frac{\d }{\d t}\bar \Phi^g(s,\mu_s) \ \d s \label{bellman_phi_eqns},
\end{align}
where $\hat n:\Ree_{\ge 0}\times \Ree^n \lra \Ree^n$ is the outward-pointing unit normal field on $\partial{\rm S}_{\mu_s^a}$, ${\rm v}_{\rm out}:\Ree_{\ge 0}\times\Ree^n \lra \Ree^n$ is the surface velocity vector, and
\begin{align} \label{J_out}
{\rm J}_{\rm out}(t,x,u) \Let f(t,x,u)\rho^a(t,x) - \nabla_x \cdot \left[\frac{\sigma_t\sigma_t\t}{2} \rho^a(t,x) \right].
\end{align}
Taking the functional time derivative of $\bar V^g$ based on the chain rule in the space of measures \cite[Prop. 5.102]{prob_mean_field} and applying the divergence theorem for the time-varying boundary $\partial \, {\rm S}_{\mu}$, leads to the following Reynolds transport-like expression
\begin{align} \label{proof_grad_flow}
\frac{\d }{\d t}\bar V^g(t,\mu) &= \frac{\partial }{\partial t}\bar V^g(t,\mu) + \int_{{\rm S}_{\mu}} \left[\left(\frac{\delta \bar V^g}{\delta \mu}(t,\mu) \right) (t,x)  \frac{\partial }{\partial t}\rho^a(t,x) \right] \d x \nonumber \\
    & + \int_{\partial \, {\rm S}_{\mu}} \left[\left(\frac{\delta \bar V^g}{\delta \mu}(t,\mu) \right) (t,x) \rho^a(t,x)  \big( {\rm v}_{\rm out}(t,x) \cdot \hat{n} (t,x) \big) \right] \d S(x).
\end{align}
Next, we aim to find an expression for $\bar V^g(t+h,\hat \mu)$ to simplify \eqref{bellman_eq1}. We claim that
\begin{align} \label{proof_int1}
\bar V^g(t+h,\hat \mu) = \bar V^g(t,\mu)+ \int_t^{t+h} \frac{\d }{\d t}\bar V^g(s,\mu^a_s) \, \d s.
\end{align}
We remind the reader that the derivations leading to this point are understood in the viscosity sense (see Remark \ref{remark:vis_sense}), i.e., they are carried out on test functionals in the class $C_2^{1,2}$. In particular, with a sufficiently regular stopping boundary $\partial\mathcal D^g$ (see Remark~\ref{reg_opt_bound}), Assumptions \ref{regularity_dyn} and \ref{regularity_cost}, and the regularity of the test functional class $C^{1,2}_2$ (Definition \ref{def:test}), the resulting integrand in \eqref{proof_grad_flow} is integrable on the finite interval $[t,t+h]$ (e.g., bounded on the domain $[t,t+h]$). Consequently, \eqref{proof_int1} follows from the fundamental theorem of calculus (FTC), and it can be interpreted as a generalized Dynkin's Formula \cite{book1} with a given initial distribution. 
Substituting \eqref{proof_grad_flow}, and then \eqref{bellman_eq1} and \eqref{bellman_phi_eqns} in \eqref{proof_int1}, we have
\begin{align} \label{FP_proof1}
&0=\min\limits_{u \in \mathcal U} \mathbb{E}^{\mathcal{F}_t}_{x\sim\mu} \int_t^{t+h} L(s,x_s,u_s(x_s)) \, \d s + \int_t^{t+h}  \frac{\partial }{\partial t}\bar V^g(s,\mu^a_s) \, \d s   \\
&  +  \int_t^{t+h} \int_{{\rm S}_{\mu^a_s}} \left[\left(\frac{\delta \bar V^g}{\delta \mu}(s,\mu^a_s) \right) (s,x)  \frac{\partial }{\partial t}\rho^a(s,x) \right] \d x \, \d s \nonumber \\
&  +  \underbrace{\int_t^{t+h} \int_{\partial \, {\rm S}_{\mu^a_s}} \Bigg[\left(\frac{\delta \bar V^g}{\delta \mu}(s,\mu^a_s) \right) (s,x)  \rho^a(s,x) ({\rm v}_{\rm out}(s,x) \cdot \hat{n}(s,x)) \Bigg] \, \d S(x) \, \d s}_{\varheart} \nonumber \\
&  +  \underbrace{\int_t^{t+h} \int_{\partial \, {\rm S}_{\mu^a_s}} \big\{\Phi (s,x)   [{\rm J}_{\rm out}(s,x,u_s(x)) -\rho^a(s,x){\rm v}_{\rm out}(s,x) ] \cdot \hat{n} (s,x) \big \}  \d S(x) \, \d s.}_{\vardiamond}\nonumber
\end{align}
Plugging $\frac{\partial \rho^a}{\partial t}(\cdot)$ from \eqref{FP_proof} into \eqref{FP_proof1}, and focusing only on the inner integration of the second integrand and applying integration by parts yields
\begin{align} \label{FP_proof2}
\hspace{-1 em}&\int_{{\rm S}_{\mu^a_s}} \left[\left(\frac{\delta \bar V^g}{\delta \mu}(s,\mu^a_s) \right) (s,x)  \frac{\partial }{\partial t}\rho^a(s,x) \right] \d x = \\
\hspace{-1 em}&\int_{{\rm S}_{\mu^a_s}} {\rm J}_{\rm out}(s,x,u_s(x))\t \frac{\partial}{\partial x}\left(\frac{\delta \bar V^g}{\delta \mu}(s,\mu^a_s) \right) (s,x) \,   \, \d x \nonumber \\
\hspace{-1 em} & \underbrace{-\int_{\partial {\rm S}_{\mu^a_s}} ({\rm J}_{\rm out}(s,x,u_s(x)) \cdot \hat n(s,x))\t \left(\frac{\delta \bar V^g}{\delta \mu}(s,\mu^a_s) \right) (s,x)  \, \d S(x)}_{\varclub} \nonumber.
\end{align}
Substituting ${\rm J}_{\rm out}$ from \eqref{J_out}
into \eqref{FP_proof2} and applying integration by parts to the diffusion term, the first term on the right-hand side of \eqref{FP_proof2} becomes
\begin{align} \label{FP_proof3}
    &\int_{{\rm S}_{\mu^a_s}} \left[f(s,x,u_s(x)) \t \frac{\partial}{\partial x} \left(\frac{\delta \bar V^g}{\delta \mu}(s,\mu^a_s) \right) (s,x)  \right. \\ \nonumber
    & \left. \hspace{15mm}+\half \trace\Big( \sigma_s \sigma_s\t \frac{\partial^2}{\partial x^2} \left(\frac{\delta \bar V^g}{\delta \mu}(s,\mu^a_s) \right) (s,x) \Big) \right] \rho^a(s,x) \,\d x,
\end{align}
where we employed the vanishing-tail conditions for the boundary at infinity, and Dirichlet boundary condition for the stopping boundary.
Using \eqref{FP_proof3} inside \eqref{FP_proof1} and the fact that the stopped trajectories only incur a terminal cost, i.e, $\frac{\delta \bar V^g}{\delta \mu}(t,\mu_t)(t,x) = \Phi(t,x)$ when $x\in {\rm S}_{\mu^a_t}$,
the fourth $\varheart$, fifth $\vardiamond$ terms from \eqref{FP_proof1} and the second term $\varclub$ from \eqref{FP_proof2}, cancel each other out, and finally, \eqref{FP_proof1} yields
\begin{align} \label{FP_proof4}
0 & = \min_{u \in \mathcal U}  \Bigg \{ \mathbb{E}^{\mathcal{F}_t}_{x\sim\mu}\int_t^{t+h} \Bigg[\frac{\partial }{\partial t}\bar V^g(s,\mu^a_s) + f(s,x_s,u_s(x_s)) \t \frac{\partial}{\partial x} \left(\frac{\delta \bar V^g}{\delta \mu}(s,\mu^a_s) \right) (s,x_s)  \Bigg. \Bigg. \nonumber \\ 
    & +\Bigg. \Bigg. \half \trace\Big( \sigma_s \sigma_s\t \frac{\partial^2}{\partial x^2} \left(\frac{\delta \bar V^g}{\delta \mu}(s,\mu^a_s) \right) (s,x_s) \Big) + L(s,x_s,u_s(x_s))  \Bigg] \, \d s  \Bigg \}.
\end{align}
The sHJB equation for the \textit{distributional} value function $\bar V^g$ is obtained from \eqref{FP_proof4} by dividing by $h$ and taking the limit as $h \ra 0$.
Since the integrand is measurable in its arguments and is locally integrable as a function of time, it converges to its pointwise value for almost every $t$ by Lebesgue–Besicovitch Differentiation Theorem \cite[Thm. 1.32]{evans_measure}, yielding the following compact form
\begin{align} \label{compact_int_proof}
    \min\limits_{u\in \Pi} \ \aset[\Big]{\bar{\mathcal{L}}_u \bar V^g(t,\mu)+ \mathbb{E}_{t,x\sim\mu}L(t,x,u(x))} = 0, \quad \text{for a.e.} \ t,
\end{align}
where in \eqref{compact_int_proof}
\begin{enumerate}[leftmargin=*,label=\alph*)]
 \item $\bar{\mathcal{L}}_u$ is the \textit{distributional} second-order generator \cite{vis_comp} defined for a test function $\Xi \in C^{1,2}_2$ and feedback controller $u\in \Pi$ by, 
\begin{align} \label{dist_sHJB-generator}
    \bar{\mathcal{L}}_u \Xi(t,\mu) & \Let \frac{\partial }{\partial t} \Xi(t,\mu) + \mathbb{E}_{t,x\sim \mu}\Bigg [f(t,x,u(x))^{\t} \frac{\partial}{\partial x} \left(\frac{\delta \Xi}{\delta \mu}(t,\mu) \right)(t,x) \Bigg. \nonumber \\
    & \Bigg. + \half \trace\Big( \sigma_t \sigma_t\t \frac{\partial^2}{\partial x^2} \left(\frac{\delta \Xi}{\delta \mu}(t,\mu) \right) (t,x) \Big) \Bigg];
\end{align}
\item and we dropped the conditioning on the filtration ${\mathcal{F}_t}$ since the limiting equation depends only on the deterministic variables $(t,\mu)$, and we also dropped $t$ from $\mu_t$ since $t$ here is arbitrary.
\end{enumerate}
By defining
\begin{align} \label{dist_V_der}
(t,x) \mapsto V^{g,\dagger}_{\mu}(t,x) \Let \left(\frac{\delta \bar V^g}{\delta \mu}(t,\mu)\right)(t,x),
\end{align}
from \eqref{compact_int_proof} we have
\begin{align} \label{b4_RW}
\frac{\partial}{\partial t} \bar V^g(t,\mu) 
&= -\min\limits_{u \in \Pi} \left \{ \mathbb{E}_{t,x \sim \mu}\left[f(t,x,u(x))\t\frac{\partial}{\partial x} V^{g,\dagger}_{\mu}(t,x)  \right. \right.\nonumber \\
&  \hspace*{15mm} \left. \left. +\trace\Big(\frac{\sigma_t\sigma_t\t }{2}\frac{\partial^2}{\partial x^2}  V^{g,\dagger}_{\mu}(t,x)\Big) + L(t,x,u) \right] \right \}.
\end{align}
Since the minimization is taken over Borel measurable feedback maps $u_t\in \Pi$ at an arbitrary $t\ge 0$, and by the standing Assumptions \eqref{A1}-\eqref{A6} and Assumption \ref{regularity_val}, the integrand \eqref{b4_RW} inside the expectation operator is Carathéodory, i.e., measurable in $t$ for each fixed $(x,u)$ and jointly continuous in both $(x,u)$ for each fixed $t$, hence a normal integrand \cite[Ch. 14. D.]{rockafellar}, we invoke Rockafellar-Wets interchange theorem \cite[Theorem 14.60]{rockafellar} to interchange the minimization and expectation.
It follows that
\begin{align} \label{sHJB_dist}
\frac{\partial }{\partial t}\bar V^g(t,\mu) 
    & =-\mathbb{E}_{t,x \sim \mu}\left \{ \min\limits_{u \in \Ree^m}\left[f(t,x,u)\t\frac{\partial}{\partial x} V^{g,\dagger}_{\mu}(t,x)  \right. \right.\nonumber \\
    & \hspace*{15mm}  \left. \left. +\trace\Big(\frac{\sigma_t\sigma_t\t }{2}\frac{\partial^2}{\partial x^2}  V^{g,\dagger}_{\mu}(t,x)\Big) + L(t,x,u) \right] \right \},
\end{align}
which is the stochastic Hamilton–Jacobi–Bellman (sHJB) equation satisfied by the function  $ \mathbb{E}_{t,x\sim\mu} \left[V^g(t,x)  \right]$ with $V^{g,\dagger}_{\mu}=V^g$. 
This can be seen by taking $\mathbb{E}_{t,x\sim\mu} \left[V^g(t,x)  \right]$ in the place of $\bar V^g$ from \eqref{sHJB_dist}, whose functional derivative from \eqref{dist_V_der} is $V^g$, and then comparing with the sHJB, that is, by substituting $V^g$ into \eqref{eqn:value_func} and taking the expectation with respect to $x \sim \mu$. 

Since both $\bar V^g(t,\mu)$ and $\mathbb{E}_{t,x\sim\mu} \left[V^g(t,x)  \right]$ satisfy the same sHJB, obtained after performing the pointwise minimization over $u\in \Ree^m$, then under the conditions ensuring uniqueness of solutions to this reduced sHJB, i.e. \eqref{A1}-\eqref{A6} together with Assumption \ref{regularity_val}, from the Comparison Theorem \cite[Theorem 3.13]{vis_comp} (alternatively, when $\bar V^g$ is the classical solution to \eqref{sHJB_dist}, uniqueness follows from the Verification Theorem \cite[Theorem 4.5]{class_ver}), we conclude that,
\begin{align}
    \bar V^g(t,\mu) = \mathbb{E}_{t,x\sim\mu} \left[V^g(t,x)  \right].
\end{align}
which completes the proof.
\end{proof}

In this section, we discussed the uniqueness for the value function associated with the stochastic optimal control and stopping problem \eqref{eqn1}, \eqref{eqn2} together with the required regularity assumptions, and established the decomposition principle together with the associated distributional sHJB through Theorem \ref{thm:disint_sHJB}. In the following section, our main goal is to rewrite this problem in an equivalent deterministic, nonlocal continuity form, which will enable a calculus of variations analysis over the space of probability measures.
\subsection{Fokker--Planck Transformation}\label{subsec:main:results:FP:transform}

Consider the controlled SDE \eqref{eqn1} and its associated Fokker--Planck equation
\begin{align} \label{FP_general}
\frac{\partial }{\partial t}\rho(t,x) = - \nabla_{x} \cdot \left[f(t,x,u_t(x))\rho(t,x) - \nabla_x \cdot \left(\frac{\sigma_t\sigma_t\t}{2} \rho(t,x)\right) \right],
\end{align}
subject to $\rho(0,x) \Let \rho_0(x)$,
where $\rho(t,\cdot)\in C^{2+\alpha}(\Ree^n)$ for some $\alpha\in(0,1)$ and $\mu_0(x)$ such that $\d \mu_0(x) = \rho_0(x) \, \d x$, and $u\in \mathcal{U}$ is an admissible feedback controller. 

Since $\sigma_t$ in \eqref{eqn1} is state-independent, we can define the transformed drift $\tilde{f}$, also called the velocity field by
\begin{align} \label{vel_field}
(t,x,u) \mapsto \tilde{f}(t,x,u) \Let f(t,x,u) - \frac{\sigma_t\sigma_t\t }{2}\nabla_{x} \log{\rho(t,x)} \in \Ree^n,
\end{align}
where $(t,x) \mapsto \nabla_{x} \log{\rho(t,x)}$ is the (Stein) score function. Using \eqref{vel_field}, we rewrite \eqref{FP_general} as the Liouville equation
\begin{align} \label{trans_eq}
\frac{\partial }{\partial t}\rho(t,x) = - \nabla_{x} \cdot \left[\tilde{f}(t,x,u_t(x))\rho(t,x)\right] \quad\text{for all }(t,x) \in \Ree_{\ge 0}\times \Ree^n,
\end{align} 
which is also known as the transport equation. 
This interpretation has been an important tool in many recent advancements in gradient flows, optimal transport, and geodesics in Wasserstein spaces; we refer the interested reader to \cite{book_ge3}. For recent results on existence and uniqueness, see Filippov's and Relaxation Theorems \cite{wass_filippov}. This transformation will be called as the \emph{Fokker-Planck Transformation}.

We highlight the fact that \eqref{trans_eq} is \emph{not} a linear transport equation. 
However, since $\tilde f$ also depends on the $\rho$ itself, \eqref{trans_eq} is equivalent to the uniformly parabolic Fokker--Planck equation~\eqref{FP_general}.

\begin{remark}
We employ the Fokker--Planck transformation at the level of marginal density evolution. Specifically, if \(\rho\) denotes the (alive) density associated with the SDE \eqref{eqn1} and $\tilde f$ is defined by \eqref{vel_field}, then the continuity (Liouville) equation \eqref{trans_eq} is an equivalent rewriting of the Fokker--Planck equation, and hence yields the same one-time marginals $\{\mu_t\}_{t\ge 0}$. 
Consequently, the characteristic ODE $\d x_t = \tilde f (t,x_t,u_t(x_t)) \, \d t$ should not be interpreted as generating sample paths of \eqref{eqn1}; rather, it provides a deterministic characteristic representation of the marginal law (density) evolution.
\end{remark}

Conversely, one may start from a deterministic continuity equation whose velocity field depends explicitly on the score function and recover a SDE that induces the same density evolution. We refer to this inverse construction as the \emph{reverse Fokker--Planck Transformation}. In the present setting, applying the reverse Fokker--Planck Transformation to the deterministic dynamics generated by \eqref{vel_field} recovers the controlled SDE \eqref{eqn1}.

We next show that It\^o's formula is recovered at the level of the Fokker--Planck equation after the Fokker-Planck transformation.

\begin{lemma}[Reverse Fokker--Planck Recovery of It\^o's formula]
Consider the controlled SDE \eqref{eqn1} and its associated Fokker-Planck equation \eqref{FP_general}. Let the corresponding Fokker--Planck transformed deterministic system be  $\d x_t=\tilde f(t,x_t,u_t(x_t)) \,\d t$, where the transformed velocity field $\tilde f$ is given by \eqref{vel_field}, for some $u\in\mathcal U$ and every $t\in\Ree_{\ge 0}$. Furthermore, let $C^{1,2}\ni g:\Ree_{\ge 0}\times\Ree^n \lra \Ree^n$, and assume that, for each $t\in \Ree_{\ge 0}$, the map $x \mapsto g(t,x)$ is a $C^2$ diffeomorphism onto its image. For each \(t\), let $\varpi(t,\cdot)$ be the density of \(y_t\).

Then, $\varpi$ satisfies the Fokker--Planck equation associated with the SDE obtained from It\^o's formula applied to $y_t=g(t,x_t)$ along the original controlled SDE \eqref{eqn1}. Equivalently, applying the reverse Fokker--Planck transformation to $y$ recovers precisely the It\^o-transformed SDE.
\end{lemma}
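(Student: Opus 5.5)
The plan is to compare two continuity-type equations for $\varpi$ directly. The first is the one inherited from the Fokker--Planck transformed (characteristic) dynamics. The second is the Fokker--Planck equation of the It\^o-transformed SDE. Write $\Sigma_t \Let \sigma_t\sigma_t\t$ and $Dg(t,x)\Let \frac{\partial}{\partial x}g(t,x)$. Write $h(t,\cdot)\Let g(t,\cdot)^{-1}$ for the inverse on the image, and $g_i$ for the $i$-th component of $g$.

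\textbf{Step 1: $\varpi$ solves a continuity equation.} Along the characteristics $\d x_t=\tilde f(t,x_t,u_t(x_t))\,\d t$, the density $\rho$ solves \eqref{trans_eq}, and $\varpi(t,\cdot)=g(t,\cdot)_{\#}\rho(t,\cdot)$. By the change of variables formula,
\begin{align*}
\varpi(t,y)=\rho(t,h(t,y))\,\bigl|\det Dg(t,h(t,y))\bigr|^{-1}.
\end{align*}
The image points $y_t=g(t,x_t)$ move with velocity $v(t,y)\Let \bigl[\partial_t g+Dg\,\tilde f\bigr](t,h(t,y))$. To justify this cheaply, test against $\varphi\in C_c^\infty$: $\int\varphi(y)\varpi(t,y)\,\d y=\int \varphi(g(t,x))\rho(t,x)\,\d x$. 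Differentiating in $t$ and using \eqref{trans_eq} with an integration by parts (vanishing tails, Remark~\ref{remark_abs}) gives the weak form of $\partial_t\varpi=-\nabla_y\cdot(v\,\varpi)$.

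\textbf{Step 2: the target equation.} It\^o's formula applied to $y_t=g(t,x_t)$ along \eqref{eqn1} gives $\d y_t=b\,\d t+Dg\,\sigma_t\,\d w_t$. Here
\begin{align*}
b_i=\partial_t g_i+\nabla g_i\t f+\tfrac12\trace\bigl(\Sigma_t\nabla^2 g_i\bigr),
\end{align*}
with everything evaluated at $x=h(t,y)$. Its Fokker--Planck equation is $\partial_t\varpi=-\nabla_y\cdot\bigl[b\varpi-\tfrac12\nabla_y\cdot(A\varpi)\bigr]$, where $A\Let Dg\,\Sigma_t\,Dg\t$. It therefore suffices to prove the flux identity
\begin{align*}
v\,\varpi=b\,\varpi-\tfrac12\nabla_y\cdot(A\varpi)
\end{align*}
pointwise on the image of $g(t,\cdot)$. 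Since the two fluxes then coincide, the two equations are identical. Read backwards, this is the reverse Fokker--Planck transformation: the score-corrected velocity in $y$-coordinates is exactly $b-\tfrac12 A\nabla_y\log\varpi-\tfrac12\nabla_y\cdot A$.

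\textbf{Step 3: the key computation (main obstacle).} The difficulty is the divergence in $y$ of a quantity defined through $x=h(t,y)$ and the Jacobian factor. I would use the Piola identity: for a $C^1$ vector field $W$ on $x$-space,
\begin{align*}
\nabla_y\cdot\Bigl(\frac{Dg\,W}{|\det Dg|}\circ h\Bigr)=\frac{\nabla_x\cdot W}{|\det Dg|}\circ h.
\end{align*}
This identity follows from the cofactor identity $\sum_j\partial_{x_j}\operatorname{cof}(Dg)_{ij}=0$, or equivalently by testing against $\varphi$ and changing variables.

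Apply it row by row with $W^{(i)}=\rho\,\Sigma_t\nabla g_i$, noting that $(A\varpi)_{i\cdot}=\bigl(Dg\,W^{(i)}\bigr)/|\det Dg|$. This yields
\begin{align*}
\bigl[\nabla_y\cdot(A\varpi)\bigr]_i=\frac{\nabla g_i\t\Sigma_t\nabla\rho+\rho\,\trace(\Sigma_t\nabla^2 g_i)}{|\det Dg|}\circ h .
\end{align*}
Substituting this into $b\varpi-\tfrac12\nabla_y\cdot(A\varpi)$, the It\^o correction $\tfrac12\trace(\Sigma_t\nabla^2 g_i)$ cancels. What remains is $\varpi\bigl[\partial_t g_i+\nabla g_i\t\bigl(f-\tfrac12\Sigma_t\nabla_x\log\rho\bigr)\bigr]$, which by \eqref{vel_field} equals $\varpi\,v_i$.

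\textbf{Regularity points.} Two points need checking carefully. First, $\rho>0$, so that the score in \eqref{vel_field} is defined; this holds by uniform ellipticity and Remark~\ref{remark_abs}. Second, boundary terms must vanish when the integrations by parts are done over the image of $g(t,\cdot)$ rather than over $\Ree^n$. Since $g(t,\cdot)$ is a $C^2$ diffeomorphism onto its image, I would do the integrations by parts in $x$-space, where the vanishing-tail conditions apply, and then transfer the result to $y$-space via the test-function formulation.
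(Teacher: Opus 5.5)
Your proof is correct. It reaches the paper's identity by a somewhat different organization of the computation.

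\textbf{The paper's route (velocities).} It differentiates the change-of-variables formula to split $\nabla_x\log\rho$ into $(\partial_x g)^{\top}\nabla_y\log\varpi$ plus a log-Jacobian term. It substitutes this into $\dot y_t=\partial_t g+\partial_x g\,\tilde f$ and evaluates $\nabla_y\cdot M$ (your $A$) via Jacobi's formula. Finally it recombines $-\tfrac12\nabla_y\cdot M-\tfrac12 M\nabla_y\log\varpi$ into $-\tfrac{1}{2\varpi}\nabla_y\cdot(M\varpi)$.

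\textbf{Your route (fluxes).} You compute $\nabla_y\cdot(A\varpi)$ in one step with the Piola identity, using the symmetry of $A$ to identify its rows as $\partial_x g\,W^{(i)}/|\det \partial_x g|$. As a result, the log-Jacobian terms never appear and you never need the score of $\varpi$; only the score of $\rho$ enters, through $\tilde f$. The It\^o correction then cancels directly against the second-order part of $\nabla_x\cdot(\rho\,\sigma_t\sigma_t^{\top}\nabla g_i)$.

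\textbf{Comparison.} The Piola identity and the paper's Jacobi-formula evaluation of $\nabla_y\cdot M$ express the same fact, namely that the rows of the cofactor matrix are divergence-free. So neither route is deeper, but yours is shorter. You also supply something the paper takes for granted: the test-function argument that the push-forward density $\varpi$ solves the continuity equation with velocity $(\partial_t g+\partial_x g\,\tilde f)\circ h$.

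On your second regularity concern, the Piola identity is pointwise, so you may test it with $\varphi$ compactly supported in the image of $g(t,\cdot)$. That image is open by the inverse function theorem, and the pullback $\varphi\circ g(t,\cdot)$ is then compactly supported in $x$, so no boundary terms arise at that step.

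Conversely, the paper's route exhibits explicitly the score-corrected form $\dot y=b-\tfrac12\nabla_y\cdot M-\tfrac12 M\nabla_y\log\varpi$, to which the reverse transformation is applied. In your version that form appears only after dividing the flux identity by $\varpi$.
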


\begin{proof}
Since for each fixed $t\in \Ree_{\ge 0}$, $y\mapsto \varpi(t,y)$ is the push-forward density of $x \mapsto \rho(t,x)$ under the map $x \mapsto g(t,x)$, change-of-variables formula gives,
\begin{align}
    \rho(t,x_t)=\varpi(t,g(t,x_t)) \left|\det \frac{\partial}{\partial x} g(t,x_t)\right| \label{eq:rho_varpi}.
\end{align}

Differentiating \eqref{eq:rho_varpi} with respect to $x$ leads to
\begin{align} \label{eq:gradlog}
\nabla_x \log \rho(t,x_t) =
\frac{\partial}{\partial x} g(t,x_t)\t  \nabla_y \log \varpi(t,y_t)\big|_{y_t=g(t,x_t)} + \nabla_x \log \left|\det \frac{\partial}{\partial x} g(t,x_t)\right|.
\end{align}

Now, let $y_t=g(t,x_t)$, with $x_t$ evolving according to the Fokker--Planck transformed velocity field \eqref{vel_field}. By the chain rule,
\begin{align} \label{eq:ito_rev_lem_1}
\dot y_t &= \frac{\partial}{\partial t}g(t,x_t)+\frac{\partial}{\partial x} g(t,x_t)\dot x_t \nonumber\\
&= \frac{\partial}{\partial t}g(t,x_t) + \frac{\partial}{\partial x} g(t,x_t)f(t,x_t,u_t(x_t))
-\frac12 \frac{\partial}{\partial x} g(t,x_t)\sigma_t\sigma_t \t \nabla_x \log \rho(t,x_t).
\end{align}

Substituting \eqref{eq:gradlog} into \eqref{eq:ito_rev_lem_1} yields
\begin{align} \label{eq:ito_rev_lem_2}
\dot y_t =& \frac{\partial}{\partial t}g(t,x_t) + \frac{\partial}{\partial x} g(t,x_t)f(t,x_t,u_t(x_t))  \nn\\
&- \frac12 \frac{\partial}{\partial x} g(t,x_t)\sigma_t\sigma_t \t \frac{\partial}{\partial x} g(t,x_t)\t \nabla_y \log \varpi(t,y_t) \nn \\
&- \frac12 \frac{\partial}{\partial x} g(t,x_t)\sigma_t\sigma_t\t \nabla_x \log \left|\det \frac{\partial}{\partial x} g(t,x_t)\right|.
\end{align}

Define $M(t,y_t)\Let \frac{\partial}{\partial x} g(t,x_t)\sigma_t\sigma_t \t \frac{\partial}{\partial x} g(t,x_t) \t$ with $y_t=g(t,x_t)$. Using the chain rule, the inverse Jacobian relation $\nabla_y={\left(\frac{\partial}{\partial x}g(t,x_t)\right)\t}^{-1}\nabla_x$, and Jacobi's formula, one obtains,
\begin{align*}
    \nabla_y \cdot M(t,y_t) =
\tr \left(\sigma_t\sigma_t \t \frac{\partial^2}{\partial x^2}g(t,x_t)\right)
+
\frac{\partial}{\partial x} g(t,x_t)\sigma_t\sigma_t \t \nabla_x \log \left|\det \frac{\partial}{\partial x} g(t,x_t)\right|,
\end{align*}
where the trace is understood componentwise for each entry of $g$, i.e.,
\begin{align*}
    \tr \left(\sigma_t\sigma_t\t \frac{\partial^2}{\partial x^2}g(t,x_t)\right) \Let 
    \begin{bmatrix}
    \tr \left(\sigma_t\sigma_t\t \frac{\partial^2}{\partial x^2}g_1(t,x_t)\right)\\
    \vdots \\
    \tr \left(\sigma_t\sigma_t\t \frac{\partial^2}{\partial x^2}g_n(t,x_t)\right)
    \end{bmatrix}.
\end{align*}

Hence, \eqref{eq:ito_rev_lem_2} becomes,
\begin{align} \label{eq:ito_rev_lem_3}
\dot y_t &= \frac{\partial}{\partial t}g(t,x_t) + \frac{\partial}{\partial x} g(t,x_t)f(t,x_t,u_t(x_t)) + \frac12 \tr \big(\sigma_t\sigma_t \t \frac{\partial^2}{\partial x^2}g (t,x_t)\big)\nn\\
&\quad-\frac12 \nabla_y \cdot M(t,y_t)-\frac12 M(t,y_t)\nabla_y \log \varpi(t,y_t).
\end{align}

Since the multiplier of the term $\nabla_{y} \log{\varpi(t,y_t)}$ in \eqref{eq:ito_rev_lem_3} depends explicitly on $y_t = g(t,x_t)$, we invoke the \emph{reverse Fokker-Planck Transformation}. To do so, first observe that,
\begin{align*}
    \left(-\frac12 \nabla_y \cdot M(t,y_t)-\frac12 M(t,y_t)\nabla_y  \log \varpi(t,y_t) \right)\varpi(t,y_t) \\
     = - \frac{1}{2} \nabla_{y} \cdot \left(\frac{\partial }{\partial x}g(t,x_t) \sigma_t\sigma_t\t \frac{\partial }{\partial x}g(t,x_t) \t \varpi(t,y_t) \right).
\end{align*}

As a result, the Fokker-Planck equation for $\varpi$ is
\begin{alignat}{2} \label{eq:ito_rev_lem_4}
\frac{\partial }{\partial t}\varpi(t,y_t) 
                    &= - \nabla_{y} \cdot 
                                       &&\Bigg[\left(\frac{\partial }{\partial t}g(t,x_t)+\frac{\partial }{\partial x}g(t,x_t) f(t,x,u_t(x_t))+\frac12 \tr \big(\sigma_t\sigma_t \t \frac{\partial^2}{\partial x^2}g (t,x_t)\big) \right) \varpi(t,y_t)  \nonumber \\
                    &                  && - \frac{1}{2} \nabla_{y} \cdot                                        \left(\frac{\partial }{\partial x}g(t,x_t)                                  \sigma_t\sigma_t\t \frac{\partial }{\partial x}g(t,x_t) \t                      \varpi(t,y_t) \right)\Bigg],
\end{alignat}
which is also the Fokker-Planck equation associated with the following SDE,
\begin{align} \label{eq:ito_rev_lem_fin}
\d z_t &= \left( \frac{\partial}{\partial t}g(t,x_t) + \frac{\partial}{\partial x}g(t,x_t)f(t,x_t,u_t(x_t)) + \frac12 \tr \big(\sigma_t\sigma_t \t \frac{\partial^2 }{\partial x^2}g(t,x_t)\big) \right) \d t \nn \\ 
&\quad + \frac{\partial}{\partial x}g(t,x_t)\sigma_t\,\d w_t.
\end{align}

Notice that \eqref{eq:ito_rev_lem_fin} is exactly the SDE obtained by applying It\^o's formula to $y_t=g(t,x_t)$. Hence, the reverse Fokker--Planck transformation recovers precisely the It\^o-transformed dynamics.
\end{proof}

Next, in analogy with the stopping boundary family $\mathcal{G}$, we define the set of \textit{final-time assignment functions} by
\begin{align} \label{fpt_set_tau}
\mathcal{T} \Let \aset[\big]{ \taum:\Ree^n \ra \Ree_{\ge 0} \suchthat \taum \in C^1(\Ree^n)},
\end{align}
that is, $\mathcal{T}$ is the set of nonnegative, continuously differentiable functions assigning a final time to each initial condition $x_0 \in \mathbb R^n$. 
Specifically, the value $\tau_m(x_0)$ denotes the prescribed final time associated with the trajectory with the initial condition $x_0$. Therefore, $\mathcal T$ is the class of initial-condition-dependent time-assignment maps.

In what follows, for $\tau_m\in\mathcal T$, we do not attempt to characterize whether the induced alive set is a sufficiently regular time-dependent space–time domain. Instead, we restrict attention to \emph{admissible} time-assignment functions, $\mathcal{T}_{\rm ad} \subset \mathcal{T}$ for which the alive domain and Dirichlet problem are well posed for the variational analysis carried out in this paper. For further analysis, we define the following sets.

\begin{definition} \label{fpt_admis_tau}
Let $\taum \in \mathcal{T}_{\rm ad}$. For each $t\ge 0$, denote by $\mathcal D^\taum$ its associated admissible alive space--time domain, by $\mathcal D_t^\taum$ its time-slice,  by $\mathcal E^{\tau_m}$ its stopping surface, and by $\partial \mathcal D_t^\taum$ the corresponding time-sliced boundary. We define these sets as follows:
\begin{align}
\mathcal D^\taum &\Let \aset[\big]{(t,x)\in[0,+\infty)\times\Ree^n \suchthat \exists\, x_0 \in \Ree^n  \text{ s.t.}, \ x=x_t, \, 0\le t < \taum(x_0)}, \nn\\
\mathcal D_t^\taum &\Let \aset[\big]{x \in\Ree^n \suchthat \exists\, x_0 \in \Ree^n,  \text{ s.t.}, \ x=x_t, \, 0\le t < \taum(x_0)}, \nn\\
\partial \mathcal D_t^\taum &\Let \aset[\big]{x\in \Ree^n \suchthat \exists \ x_0 \in \Ree^n,\text{ s.t.}, \ x=x_t, \ t = \taum(x_0)}, \nn \\
\mathcal E^\taum &\Let \aset[\big]{(t,x)\in[0,+\infty)\times\Ree^n \suchthat \exists \ x_0 \in \Ree^n, \text{ s.t.}, \ x=x_t, \  t = \taum(x_0)}.
\end{align}
\end{definition}
Given an admissible stopping rule $\taum \in \mathcal{T}_{\rm ad}$, $\rho$ is understood as the solution to the parabolic Dirichlet Fokker--Planck problem \eqref{FP_general} on the alive domain $\mathcal D^{\taum}$, and the induced field $\tilde{f}$ is then defined a posteriori from this $\rho$. For the well-posedness of the problem formulation, in the spirit of our Remark \ref{reg_opt_bound} on the regularity of the stopping boundary class $\mathcal{G}$, we impose the following assumption.

\begin{assumption}[Regularity on $\mathcal D^\taum$]
\label{ass_reg_tau}
Let $\taum \in \mathcal{T}_{\rm ad}$. Throughout the analysis in \S\ref{subsec:DetStoc:rel}, \S\ref{var_analysis_tr_pr} and Appendix \ref{Appendix:A}, we assume that the associated stopping surface $\mathcal E^\taum$ is a $C^1$
hypersurface in $\Ree_{\ge 0}\times\Ree^n$. In particular, $\mathcal E^\taum$ is local Lipschitz regularity (cf.\ the stopping boundary class $\mathcal G$ in Section~\ref{sect:back_theory}). Moreover, for any truncation
\begin{align*}
    \mathcal D^{\taum}_{a,b}\Let\mathcal D^{\taum}\cap\big([0,a]\times\{x\in\Ree^n:\|x\|<b\}\big),
\end{align*}
with $a,b>0$, we assume that $\partial\mathcal D^{\taum}_{a,b}$ is a Lipschitz boundary. Hence, the divergence theorem is applicable on each truncated domain $\mathcal D^{\taum}_{a,b}$, and the corresponding boundary integrals are well-defined.
\end{assumption}
\begin{remark}
\label{remark_tau_ibp}
Assumption \ref{ass_reg_tau} is a regularity hypothesis that will be imposed only to justify the later use of integration by parts and the divergence theorem on the domain for the alive trajectories, such that, under the admissible perturbations of $\taum$, we can differentiate relevant domain integrals. We do not address when such regularity holds,
as it is, as pointed out earlier in Remark \ref{reg_opt_bound}, is heavily problem dependent.
\end{remark}

\begin{assumption} \label{Assumption_5}
    Let $\alpha \in(0,1)$ and for each admissible $\taum \in \mathcal{T}_{\rm ad}$ assume that the associated stopping boundary $\mathcal{E}^{\taum}$ belongs to the class $C^{1+\alpha/2, \,2+\alpha}$ (parabolic H\"older class of order $2+\alpha$). 
    We assume that there exists a unique nonnegative classical solution, $\rho \in C^{1+\alpha/2,\,2+\alpha}_{\rm loc}(\mathcal{D}^{\taum})$, for the uniformly parabolic Dirichlet problem defined by the PDE \eqref{FP_general} (equivalently, by its transport-form \eqref{trans_eq}) on the domain $\mathcal{D}^{\taum}$, Dirichlet boundary condition $\rho(t,x)=0$ for all $(t,x) \in \mathcal{E}^{\taum}$ and the initial condition $\rho(0,\cdot)=\rho_0(\cdot)$. 
    Furthermore, for any compact set $\mathcal{X}$ that is compactly contained in  $\mathcal{D}^{\taum}$, the functions $\rho(t,x), \nabla_x \rho(t,x), \nabla_{xx} \rho(t,x)$ are assumed to be integrable and bounded, and $\rho(t,x)>0$ for all $(t,x)\in \mathcal{X}$.
\end{assumption}

Assumption~\ref{Assumption_5} is imposed as the standing regularity hypothesis for the subsequent analysis. As pointed out in Remark~\ref{remark_tau_ibp}, verifying such regularity for a given admissible time-assignment function $\tau_m \in \mathcal{T}_{\rm ad}$ depends strongly on the particular problem, hence we do not attempt a detailed characterization here. Rather, the following remark is included only to provide a standard sufficient set of stronger assumptions under which Assumption~\ref{Assumption_5} follows from a classical Schauder-type parabolic regularity theory. In this sense, Remark~\ref{Remark_4_Asmp5} should be viewed as a justification for the plausibility of Assumption~\ref{Assumption_5}, and not as an additional standing hypothesis.

\begin{remark}[Classical regularity under stronger assumptions] \label{Remark_4_Asmp5}
Assume, in addition to Assumption \ref{Assumption_5}, that $\rho_0 \in C^{2+\alpha}(\mathcal{D}_0^\taum)$ satisfies the compatibility condition, i.e., $\rho_0(x_0)=0$ for any $x_0 \in \partial \mathcal{D}_0^\taum$ and $\Pi \ni u_t: (t,x)\mapsto \Ree^m$ is also H\"older continuous in $t,x$. Since $\sigma_t \sigma_t\t$ is a uniformly elliptic matrix for all $t\ge 0$, and $f$ is H\"older continuous in $t,x,u_t$ under the standing Assumption \ref{regularity_dyn}, then Lieberman’s Schauder (regularity) theory \cite[Theorem 5.14]{lieberman} for uniformly parabolic Cauchy-Dirichlet problems on $C^{1+\alpha/2,2+\alpha}$
parabolic boundaries, guarantees the existence and uniqueness of the solution $\rho \in C^{1+\alpha/2,\,2+\alpha}_{\rm loc}(\mathcal{D}^\taum)$. Boundary gradient estimates near the (parabolic) boundary are also available under additional geometric assumptions on the Dirichlet problem. For more information, see Lieberman \cite[Theorem 10.4]{lieberman} for up-to-boundary boundedness of gradients. Interior gradient bounds on compact subsets of the domain follow from Lieberman \cite[Theorem 11.3(c)]{lieberman}.
\end{remark}

Based on Assumption \ref{Assumption_5}, the score function is continuously differentiable, and together with the Assumption \ref{regularity_dyn}, a unique Lagrangian flow associated with the time-dependent velocity field \eqref{vel_field}, i.e., for $\mu_0$-a.e. (almost everywhere) initial condition $x_0$, a unique solution to the ODE $\d x_t = \tilde f (t,x_t,u_t(x_t)) \d t$ exists, and is well-defined globally; see, e.g., \cite[Lemma 8.1.4]{book_gr} and \cite{book_gr2,book_ge3}.

\begin{remark}[Score field near the stopping boundary]
Since the alive density, i.e., $\rho^a(\cdot)$ satisfies an absorbing (Dirichlet) boundary condition on $\partial \mathcal{D}^\taum$ (Assumption \ref{remark_abs2}) for some admissible $\taum \in \mathcal{T}_{\rm ad}$, the score term $\nabla_x \log \rho^a(\cdot)$ typically becomes unbounded near the boundary $\partial \mathcal{D}^\taum$. We interpret all the score-related expressions on compact subsets $\mathcal{D}^{\taum,\delta}$ that are strictly inside $\mathcal{D}^\taum$ (excluding a $\delta$-boundary layer near the stopping boundary $\partial \mathcal{D}^\taum$),
\begin{align*}
    \mathcal{D}^{\taum,\delta} 
\Let \aset[\big]{(t,x)\in \mathcal{D}^{\taum} \suchthat  \inf_{y\in \partial \mathcal{D}_t^{\taum}} \|x-y\|\ge \delta},
\end{align*}
for some $\delta >0$, and are thus well-defined and bounded in that region.
\end{remark}

Using the Fokker--Planck transformation, as in \eqref{vel_field}, we are interested in a deterministic representation of the original stochastic optimal control problem for the given SDE \eqref{eqn1} and the objective function \eqref{eqn2}. To this end, we leverage the new deterministic dynamics, consisting of \eqref{FP_general} and \eqref{vel_field}, and define the problem \highlight{(P1)} by:
\begin{mdframed}[backgroundcolor=black!10,skipabove=4pt, linewidth=1pt, innertopmargin=4pt]
\begin{align}
&\min_{\taum\in\mathcal{T}_{\rm ad},u\in \mathcal{U}} \mathbb{E}_{0,x \sim \mu_0}\left[\int_0^{\taum(x_0)} L(s,x_s,u_s(x_s)) \, \d s 
+ \Phi({\taum(x_0)},x_{\taum(x_0)})\right], \tag{$\text{(P1a)}$} \label{eqn:P1} \nonumber\\
\text{subject to} & \nonumber\\
& \quad \d x_t = \tilde f(t, x_t, u_t(x_t)) \, \d t \quad \text{with }x_0\sim\mu_0 \tag{$\text{(P1b)}$} \label{eqn:P2} \nonumber.
\end{align}
\end{mdframed}
In \highlight{(P1)}, we have the following data: 
\begin{enumerate}[label={\textup{(P1-\alph*)}}, leftmargin=*, widest=b, align=left]
\item $\mu_0\in \mathcal{P}_2(\Ree^n)$ is the initial probability measure; 

\item we denote the alive probability measure at time $t$ as $\mu_t^a\in \mathcal{P}_{2, \rm sub}(\Ree^n)$, which satisfies $\d \mu^a_t(x) = \rho^a(t,x)\d x$, and propagates according to 
\[
\frac{\partial }{\partial t}\rho^a(t, x) + \nabla_x \cdot \left[\tilde{f}(t,x,u_t(x))\rho^a(t,x)\right] = 0,
\]
with $x\in \mathcal D^\taum_t, t \in \mathbb{R}_{\ge0}$ and $u_t=u(t,\cdot)$, $u\in\mathcal{U}$; together with the absorbing (Dirichlet) boundary condition
\begin{align*}
    \rho^a(t,x)=0, \quad \forall (t,x)\in  \mathcal{E}^{\taum}, 
\end{align*}

\item we emphasize the fact that the drift term $\Ree_{\ge0} \times \R[n] \times \R[m] \ni (t,x,u) \mapsto \tilde f(t,x,u) \in \R[n]$ is nonlocal, i.e., it depends on the current probability measure $\mu_t$ at $t\ge 0$;

\item and therefore, the constraint \eqref{eqn:P2} is understood in McKean-Vlasov/nonlocal sense, and is associated with the density evolution that is induced by \eqref{eqn:P2} and admissible stopping rule (the Dirichlet boundary condition for the Fokker--Planck equation \eqref{trans_eq}) $\taum\in\mathcal{T}_{\rm ad}$.
\end{enumerate}

We now provide our second main result.

\subsection{Deterministic \& Stochastic Relationships}\label{subsec:DetStoc:rel}
A key result used throughout this paper is Stein's identity, which we state in the following lemma. For a detailed proof, we refer the reader to \cite{stein1,stein2}.

\begin{lemma}[Stein's Identity]\label{lem_stein}
    Let $\zeta: \Ree^n \to \Ree^k$ be a differentiable multivariate test function that takes state $x$ as an input, and let $\d \mu(x)=\rho(x)\,\d x$ with $\mu\in \mathcal{P}_{\rm sub}(\Ree^n)$, and $\rho(x)$ is the associated probability density function. 
    Assuming that $\rho(x)\zeta(x)|_{\partial{\rm S}_{\mu}} = 0$, or $\lim_{\lVert x \rVert\to +\infty} \rho(x)\zeta(x) = 0$ when ${\rm S}_{\mu} = \Ree^n$, then 
    \begin{align}
        \int_{{\rm S}_{\mu}} \big(\zeta(x) \nabla_x \log \rho(x)\t + \nabla_x \zeta(x) \big) \, \d \mu = 0.
    \end{align}
\end{lemma}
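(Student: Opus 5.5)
The identity is the multivariate divergence theorem in disguise. Since $\nabla_x \log \rho = \nabla_x\rho/\rho$ wherever $\rho>0$, and $\d\mu = \rho\,\d x$, the integrand combines into a single divergence. Work componentwise: for each $i\in\{1,\dots,k\}$ and $j\in\{1,\dots,n\}$, the $(i,j)$ entry of the integrand times $\rho$ is
\begin{align*}
\zeta_i(x)\,\frac{\partial}{\partial x_j}\rho(x) + \rho(x)\,\frac{\partial}{\partial x_j}\zeta_i(x) = \frac{\partial}{\partial x_j}\bigl(\rho(x)\zeta_i(x)\bigr).
\end{align*}
Here $\nabla_x\zeta$ is read as the $k\times n$ Jacobian, so that it matches the shape of $\zeta\,\nabla_x\log\rho\t$. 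Hence the claimed integral equals $\int_{{\rm S}_\mu} \partial_{x_j}(\rho\zeta_i)\,\d x$ entrywise, and the problem reduces to showing that this integral of an exact derivative vanishes.

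The steps are as follows.

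First, I restrict to the interior of ${\rm S}_\mu$, where $\rho>0$ and the score is well defined. Since $\rho=0$ on the complement, $\rho\,\nabla\log\rho$ is interpreted as $\nabla\rho$, so no singular set contributes.

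Second, in the case ${\rm S}_\mu$ has a boundary, I apply the divergence theorem to the vector field $e_j\,\rho\zeta_i$. This gives
\begin{align*}
\int_{{\rm S}_\mu} \partial_{x_j}(\rho\zeta_i)\,\d x = \int_{\partial {\rm S}_\mu} \rho\,\zeta_i\, \hat n_j \,\d S(x),
\end{align*}
and the right-hand side vanishes by the hypothesis $\rho\zeta|_{\partial{\rm S}_\mu}=0$. This requires a Lipschitz boundary, which I take from Assumption~\ref{ass_reg_tau} in the settings where the lemma is used.

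Third, in the case ${\rm S}_\mu=\Ree^n$, I integrate over the balls $B_R=\{\|x\|<R\}$ and obtain the flux $\int_{\|x\|=R}\rho\zeta_i\hat n_j\,\d S$. I then let $R\to+\infty$ using the decay $\rho\zeta\to 0$, in the spirit of the vanishing-tail conditions of Remark~\ref{remark_abs}.

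The main obstacle is the third step. Pointwise decay $\rho\zeta\to0$ alone does not force the sphere integral, which carries the area factor $R^{n-1}$, to vanish. I would handle this by additionally assuming that $\partial_{x_j}(\rho\zeta_i)\in L^1(\Ree^n)$, i.e.\ that both terms of the integrand are $\mu$-integrable, which is needed anyway for the statement to make sense. With integrability, the full integral is the limit of the integrals over $B_R$. Writing the sphere flux as a function of $R$ and integrating over $R\in[0,\infty)$ shows it is integrable in $R$ (using $\rho\zeta\in L^1$, or a Gaussian-type tail from Aronson's bounds). Hence the flux tends to zero along some subsequence $R_k\to\infty$, which suffices.

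A similar caveat applies near $\partial{\rm S}_\mu$, where the score may blow up. There I would apply the divergence theorem on $\delta$-interior sets $\{\mathrm{dist}(x,\partial{\rm S}_\mu)\ge\delta\}$ and pass to the limit $\delta\to0$ using continuity of $\rho\zeta$ up to the boundary.
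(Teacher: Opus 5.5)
Your argument is correct and is the standard integration-by-parts proof: since $\rho\,\nabla_x\log\rho=\nabla_x\rho$, each entry of the integrand against $\d\mu$ is $\partial_{x_j}(\rho\zeta_i)\,\d x$, which integrates to a boundary flux that vanishes by hypothesis. The paper gives no proof of its own and defers to \cite{stein1,stein2}, so your route is the intended one. One small improvement is possible in the full-support case: the extra hypothesis $\rho\zeta\in L^1(\Ree^n)$ is not needed if you integrate $\partial_{x_j}(\rho\zeta_i)$ along lines parallel to $e_j$ and apply Fubini, because on almost every line the integral equals the difference of the limits of $\rho\zeta_i$ at $\pm\infty$, which is zero by the pointwise decay, and this avoids the $R^{n-1}$ area factor of the sphere argument altogether.
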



\begin{definition}
We define the \emph{pointwise} and \emph{distributional} value functions for the deterministic optimal control problem (P1) by
\begin{align}
     V_d^{\tf,\mu}(t,x) &\Let \min_{u\in \mathcal{U}} \quad\int_t^{\tf} L(s, x_s, u_s(x_s)) \, \d s  
     + \,\Phi(\tf, x_{\tf})\label{relation_d_ll}, \\
    \bar V_d^{\taum}(t,\mu) &\Let\mathbb{E}_{t,x\sim\mu}  \left[V_d^{\taum(x),\mu}(t,x)  \right], \label{relation_ll} \\
    \bar V_d(t,\mu) &\Let \min_{\taum\in\mathcal{T}_{\rm ad}} \bar V_d^{\taum}(t,\mu) \label{relation_lll},
\end{align}
with $V_d^{\tf,\mu}(t,x)$ being the value function for the problem (\textbf{P1}) with a time-indexed family of fixed probability field for the exogenously given sub-probability measures $\{\mu_s\}_{s\ge t}$ and fixed final time $\tf$, starting at a given deterministic initial condition $x$.
\end{definition}

Building on the discussion in Subsection~\ref{subsec:main:results:FP:transform}, we next show that the transformed problem (\textbf{P1}) satisfies the same optimality conditions as the stochastic optimal control and stopping problem (\textbf{P0}). In particular, the following theorem shows that the \emph{distributional} value function for the transformed problem (\textbf{P1}) satisfies the same HJB equation, namely the \textit{distributional} sHJB \eqref{eqn4}.

\begin{theorem}[Relation between value functions] \label{thm_fp_dshjb}
Consider Problem (\textbf{P1}) and suppose that Assumptions \ref{regularity_dyn}, \ref{regularity_cost}, \ref{ass_reg_tau} and \ref{Assumption_5} hold. Then, for any $\taum\in\mathcal{T}_{\rm ad}$, the function $\bar V_d^\taum(\cdot,\cdot)$ \eqref{relation_ll} satisfies the \textit{distributional} sHJB equation \eqref{eqn4}.
\end{theorem}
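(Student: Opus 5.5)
The plan is to show that $\bar V_d^\taum$ obeys the same dynamic programming relation that produced \eqref{eqn4} in the proof of Theorem~\ref{thm:disint_sHJB}. The only change is that the generator now comes from the transport form \eqref{trans_eq} rather than the Fokker--Planck form \eqref{FP_general}. Stein's identity (Lemma~\ref{lem_stein}) is then used to convert the score-corrected first-order term back into the second-order trace term.

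\textbf{Step 1: pointwise deterministic HJB.} Fix $\taum\in\mathcal T_{\rm ad}$ and the exogenous alive flow $\{\mu_s\}_{s\ge t}$, so that $\tilde f$ is a well-defined ($C^1$ in $x$ on $\mathcal D^{\taum,\delta}$) non-anticipating field by Assumption~\ref{Assumption_5}. Then $V_d^{\tf,\mu}$ in \eqref{relation_d_ll} is the value function of a classical deterministic control problem, and it satisfies, in the viscosity sense on $\mathcal D^\taum$,
\begin{align*}
\frac{\partial}{\partial t}V_d(t,x)+\min_{u\in\Ree^m}\Big[\tilde f(t,x,u)\t\nabla_x V_d(t,x)+L(t,x,u)\Big]=0,
\end{align*}
with the terminal condition $V_d=\Phi$ on $\mathcal E^\taum$. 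Because $\tilde f-f$ does not depend on $u$, the minimizer coincides with that of $f\t\nabla_xV_d+L$.

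\textbf{Step 2: distributional equation by a Reynolds/transport argument.} Differentiate $\bar V_d^\taum(s,\mu_s)=\int_{\mathcal D_s^\taum}V_d(s,x)\rho^a(s,x)\,\d x$ along the flow, exactly as in \eqref{proof_grad_flow}. Substitute $\partial_t\rho^a=-\nabla\cdot(\tilde f\rho^a)$ and integrate by parts on the truncated domains $\mathcal D^\taum_{a,b}$; this is justified by Assumption~\ref{ass_reg_tau}, with the tails removed by Remark~\ref{remark_abs}. The moving-boundary terms and the terminal-cost flux cancel as in the $\varheart,\vardiamond,\varclub$ cancellation, using $V_d=\Phi$ on $\mathcal E^\taum$ and the Dirichlet condition. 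This leaves
\begin{align*}
\frac{\partial}{\partial t}\bar V_d^\taum(t,\mu)+\mathbb E_{x\sim\mu}\Big\{\min_u\big[f\t\nabla_xV_d+L\big]\Big\}-\mathbb E_{x\sim\mu}\Big[\big(\tfrac{\sigma_t\sigma_t\t}{2}\nabla_x\log\rho\big)\t\nabla_xV_d\Big]=0,
\end{align*}
where the partial time derivative now includes the contribution $\mathbb E_\mu[\partial_tV_d]$. The interchange of $\min$ and $\mathbb E$ is handled by Rockafellar--Wets, as in \eqref{sHJB_dist}.

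\textbf{Step 3: Stein conversion.} Apply Lemma~\ref{lem_stein} with $\zeta=\frac{\sigma_t\sigma_t\t}{2}\nabla_xV_d$, or rather its componentwise scalar version, taking the trace. This gives
\begin{align*}
-\mathbb E_\mu\big[(\nabla\log\rho)\t\tfrac{\sigma\sigma\t}{2}\nabla V_d\big]=\mathbb E_\mu\big[\trace\big(\tfrac{\sigma_t\sigma_t\t}{2}\nabla^2_xV_d\big)\big].
\end{align*}
Substituting this identity yields exactly \eqref{eqn4}, with $V_d$ playing the role of $V^g$ (equivalently, with $\frac{\delta\bar V_d^\taum}{\delta\mu}=V_d$).

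\textbf{Main obstacle.} I expect the hard part to be the boundary hypothesis of Lemma~\ref{lem_stein}, namely $\rho\zeta=0$ on $\mathcal E^\taum$. The score blows up near the stopping surface, while $\rho=0$ there. My first attempt would be to work on $\mathcal D^{\taum,\delta}$ and use the $C^{1+\alpha/2,2+\alpha}$ regularity from Assumption~\ref{Assumption_5} to bound $\nabla V_d$ (or a test functional in $C^{1,2}_2$) up to the boundary. Then $\rho\zeta\to0$ as $\delta\to0$, and the boundary-layer integrals of $\rho\nabla\log\rho=\nabla\rho$ remain integrable. If this fails, the fallback is to note that the leftover boundary flux $\int\tfrac{\sigma\sigma\t}{2}\nabla\rho\cdot\hat n\,V_d$ is precisely the diffusive part of ${\rm J}_{\rm out}$. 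That part is already absorbed in the terminal-cost cancellation of Step 2, so the two boundary contributions cancel together rather than separately.
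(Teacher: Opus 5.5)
Your proposal is correct and follows essentially the paper's argument. You write the pointwise first-order HJB with the score-corrected drift $\tilde f$ and average it over $\mu$. You then use Lemma~\ref{lem_stein} to turn $-\mathbb{E}_\mu\bigl[(\nabla_x\log\rho)^\top\tfrac{\sigma_t\sigma_t^\top}{2}\nabla_x V_d\bigr]$ into the trace term, and identify $V_d$ with $\frac{\delta \bar V_d^{\tau_m}}{\delta\mu}$ up to the normalization constant. All of that matches the paper.

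The only real deviation is your Step~2. The paper gets the averaged equation simply by integrating the Step~1 HJB against the fixed $\mu$, swapping $\partial_t$ and the integral via the FTC and Fubini. It does not redo the Reynolds-transport boundary cancellation along the flow.

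Your boundary-layer worry about Stein's identity is also unnecessary. The boundary term is $\int \rho\,\zeta\cdot\hat n\,\mathrm{d}S$ with $\zeta=\tfrac{\sigma_t\sigma_t^\top}{2}\nabla_x V_d$. It contains no score and vanishes by the absorbing condition $\rho=0$ on $\mathcal E^{\tau_m}$, given that $\nabla_x V_d$ stays bounded there, exactly as the paper argues. Your fallback in fact conflates this term with the diffusive flux term from Step~2, so it is not needed.
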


\begin{proof}
The deterministic optimal control problem (P1), with the deterministic initial condition $x$, some fixed probability field $(t,x)\mapsto \rho(t,x)$ for the exogenously given sub-probability measure $\mu \in \mathcal P_{2,\rm sub}(\Ree^n)$ and fixed final time $\tf>0$, satisfies the following HJB equation for $V_d^{\tf,\mu}(t,x)$ at time $t>0$, given in \eqref{relation_d_ll},
\begin{align}
    &\frac{\partial}{\partial t} V_d^{\tf,\mu}(t,x) + \min\limits_{u\in \Pi} \left \{f(t,x,u(x))\t \frac{\partial}{\partial x}V_d^{\tf,\mu}(t,x) \right.  \\
    &\hspace*{10mm} \left. -\left[\frac{\sigma_t\sigma_t\t }{2}\nabla_{x} \log \rho(t,x) \right]\t\frac{\partial}{\partial x}V_d^{\tf,\mu}(t,x)+ L(t,x,u(x))\right \} = 0. \nonumber
\end{align}
Recall the definition \eqref{fpt_set_tau} and the discussion thereafter. Let the admissible time assignment function to be denoted as $\taum \in \mathcal{T}_{\rm ad}$, with $\taum(x_0)=\tf$. Taking the expectation of both sides over a given measure $\mu$, with each trajectory having possibly different final times, and using Stein's identity from Lemma \ref{lem_stein} leads to
\begin{align}
    &\frac{\partial}{\partial t} \left[\mathbb{E}_{t,x \sim \mu}  V_d^{\taum(x),\mu}(t,x) \right] +\mathbb{E}_{t,x\sim\mu} \left\{\min\limits_{u\in \Ree^m}\left[f(t,x,u)\t\frac{\partial}{\partial x} V_d^{\taum(x),\mu}(t,x)\right. \right. \nonumber \\
    &\hspace*{10mm} + \left. \left. \trace\left(\frac{\sigma_t\sigma_t\t }{2}\frac{\partial^2}{\partial x^2}  V_d^{\taum(x),\mu}(t,x)\right)+ L(t,x,u)\right] \right\}= 0,
\end{align}
where the interchange of $\frac{\partial}{\partial t}$ and integration with respect to the fixed probability measure $\mu$ (cf.\ the proof of Theorem~\ref{thm:disint_sHJB}) follows by applying the FTC in $s$ to $V_d^{\taum(x),\mu}(s,x)$ and then invoking Fubini's theorem, under the joint integrability of $(s,x)\mapsto \partial_s V_d^{\taum(x),\mu}(s,x)$ on $\mathcal{D}^{\taum}\cap([t,t+h]\times\Ree^n)$ for arbitrarily small $h>0$. Furthermore, the boundary term from integration by parts vanished due to the vanishing tail condition (see Remark~\ref{remark_abs}) and/or the absorbing boundary condition (see Assumption~\ref{remark_abs2}).

From \eqref{relation_ll} and the standard normalization 
$\int \frac{\delta \bar V_d^{\taum}}{\delta\mu}(t,\mu)(x)\, \d \mu(x)=0$ (discussed in Section~\ref{notation_sec}), it follows that,
\begin{align}
    V^{\taum(x),\mu}_{d}(t,x) &= \left(\frac{\delta \bar V_d^{\taum}}{\delta \mu}(t,\mu)\right)(t,x) + \underbrace{\int_{{\rm S}_{\mu_t}}   V^{\taum(x),\mu}_{d}(t,x) \, \d \mu(x)}_{\text{normalization constant}},
\end{align}
and, hence,
\begin{align}
        &\frac{\partial}{\partial t}\bar V_d^{\taum}(t,\mu) +\mathbb{E}_{t,x\sim\mu} \left\{\min\limits_{u\in \Ree^m}\left[f(t,x,u)\t\frac{\partial}{\partial x} V_d^{\taum(x)}(t,x)\right. \right. \nonumber \\
        &\hspace*{10mm} + \left. \left. \trace\left(\frac{\sigma_t\sigma_t\t }{2}\frac{\partial^2}{\partial x^2}  V_d^{\taum(x)}(t,x)\right)+ L(t,x,u)\right] \right\}= 0,
\end{align}
which is the same HJB equation as the \textit{distributional} sHJB \eqref{eqn4}. The proof is complete.
\end{proof}

In summary, \highlight{(P0)} is the original stochastic optimal control and stopping problem, while \highlight{(P1)} is its transformed distributional formulation. Theorem \ref{thm_fp_dshjb} shows that \highlight{(P1)} preserves the same optimality structure as \highlight{(P0)}) through the same distributional sHJB equation. This allows us to carry out the variational analysis on the transformed problem \highlight{(P1)}. To this end, we next introduce \highlight{(P2)}, a general distributionally-constrained stochastic optimal control and stopping problem that will serve as the problem formulation for the variational analysis. The necessary conditions derived in the sequel are thus established for \highlight{(P2)} and are interpreted in view of the correspondence between \highlight{(P0)} and \highlight{(P1)} established above.
\subsection{Variational Analysis for the Transformed Problem} \label{var_analysis_tr_pr}

\subsubsection{Distributionally-Constrained Problem Definition}

According to the problem formulation of \highlight{(P1)}, we are interested in solving the general stochastic optimal control problem with distribution constraints, denoted by \highlight{(P2)}:
\begin{subequations}
\begin{equation}
\min_{\taum\in\mathcal{T}_{\rm ad},u\in \mathcal{U}}  J =  \mathbb{E}_{0,x \sim \mu_0}\left[
\int_0^{\taum(x_0)} L(s,x_s,u_s(x_s)) \, \d s +\Phi\bigl({\taum(x_0)},x_{\taum(x_0)}\bigr)\right] , \label{eq:st_obj_fnc} \tag{($\text{P2a}$)}
\end{equation}
subject to,
\begin{align}
\quad & \d x_t = \tilde f(t, x_t, u_t(x_t)) \, \d t, \quad x_0\sim\mu_0, \label{eq:dyn_const} \tag{($\text{P2b}$)} \\
& \mathbb{E}_{0,x \sim \mu_0} \Psi({\taum}(x_0), x_{\taum(x_0)}) = 0\tag{($\text{P2c}$)}, \label{eq:dist_const}
\end{align}
\end{subequations}
where $\mu_0\in \mathcal{P}_2(\Ree^n)$ denotes the fixed initial distribution, and $\Psi(\cdot,\cdot)$ is the terminal distribution constraint. As in \highlight{(P1)}, \eqref{eq:dyn_const} is understood in the McKean-Vlasov (nonlocal) sense and is coupled with the induced density evolution \eqref{trans_eq} and admissible stopping rule associated with $\taum \in \mathcal{T}_{\rm ad}$.

\subsubsection{Unconstrained Problem Formulation} \label{opt_unc_form}
In order to derive the optimality conditions with independent decision variables, we transform Problem \highlight{(P2)} into an unconstrained optimization problem. Let $\{\mu_t\}_{t\ge0}\subset\mathcal{P}_2(\mathbb{R}^n)$ be a time-indexed family of absolutely continuous probability measures, and $v:[0,+\infty)\times\mathbb{R}^n\lra \mathbb{R}^n$ be the associated vector field.
We define the measure flow operator, $\Tflow{v}{t}:\mathcal{P}_2(\mathbb{R}^n) \lra \mathcal{P}_2(\mathbb{R}^n)$, that maps a given initial measure $\mu_0$ to its image at time $t$, i.e., $\mu_t=\Tflows{v}{t}\mu_0$. 

\begin{remark} \label{rem_flow_op}
The measure flow operator $\Tflow{v}{t}$ is induced by the characteristic flow of the vector field $v$. In the present setting, this flow field is the closed-loop dynamics of \highlight{(P2)}, where the characteristic solutions satisfy $\dot x(t,x)=v(t,x)$, and $\Tflow{v}{t}$ follows from the Representation Formula for the Continuity Equation \cite[Prop. 8.1.8, Theorem 8.3.1]{book_gr} (alternatively, the Superposition Principle from \cite[Theorem 3.4]{superposition}). 
\end{remark} 

Assuming a sufficiently regular velocity field $v$ along the density path $\{\mu_t\}_{t\ge0}$ with a fixed initial probability measure $\mu_0$ for the problem \highlight{(P2)}, such that, for every compact subset $A \subset \Ree^n$,
\begin{align} \label{mu_nec_cond}
    \int_0^{+\infty} \left( \sup_{x\in A} |v(t,x)|+\text{Lip}(v_t,A) \right) \, \d t < +\infty,
\end{align}
where $v_t(\cdot) \Let v(t,\cdot)$ and $\text{Lip}(v_t,A)$ denotes the Lipschitz constant of $x\mapsto v_t(x)$ in the set $A$. Moreover,
\begin{align} \label{mu_suff_cond}
    \int_0^{+\infty} \int_{{\rm S}_{\mu_t}} |v(t,x)|^p \, \d \mu_t(x) \, \d t < +\infty, \, \text{for some} \, p > 1.
\end{align}
We denote by $\mathcal V^{\mu}_{p}$ the class of velocity fields $v$ satisfying \eqref{mu_nec_cond}--\eqref{mu_suff_cond} with respect to $\{\mu_t\}_{t\ge0}$. Under $v\in\mathcal V_p^{\mu}$, the hypotheses required for both Representation Formula for the Continuity Equation \cite[Prop. 8.1.8, Theorem 8.3.1]{book_gr}, and for the Superposition Principle \cite[Theorem 3.4]{superposition} are satisfied. Then, $\{ \Tflow{v}{t}\}_{t \ge 0}$ defines a continuous flow on $\mathcal{P}_2(\Ree^n)$, the map $t\mapsto\Tflows{v}{t}\mu_0$ is continuous with respect to the $\mathcal{W}_2$ metric; moreover, the induced regular Lagrangian flow is unique $\mu_0$-a.e. \cite{superposition}. Consequently, there is a well-defined inverse (pullback) operator $\Tflows{v}{-t}=\Tflows{-v}{t}$, such that $\mu_0=\Tflows{v}{-t}\mu_t$. 

The indicator function $\indic{t<\taum(x_0)}$ will be employed to denote the alive trajectories associated with a given admissible time assignment function $\taum\in \mathcal{T}_{\rm ad}$ and the initial condition $x_0\in\mathbb{R}^n$. 
We will use the indicator function to impose $\tilde f(\cdot) = 0$ and $L(\cdot)=0$ for the stopped trajectories. Throughout the rest of this paper, we restrict our attention to pairs $(v,\{\mu_t\}_{t\ge0})$ that are consistent in the sense of the Remark \ref{rem_flow_op}, i.e., $\mu_t=\Tflows{v}{t}\mu_0$ and hence $\{\mu_t\}_{t\ge 0}$ solves the continuity equation with velocity field $v\in \mathcal V^{\mu}_{p}$.

\subsubsection{Augmented Lagrangian Formulation} \label{opt_auglag_form}

For Problem \highlight{(P2)}, define the augmented objective functional $\mathcal{J}$ as
\begin{align} \label{orig_mod_obj}
&\mathcal{J}\bigl(\{\lambda_t\}_{t\in[0,+\infty\bigr)},\{u_t\}_{t\in[0,+\infty)},\{\mu_t\}_{t\in[0,+\infty)},\taum,\eta)  \nonumber \\
& =\int_0^{+\infty} \int_{{\rm S}_{\mu_t}} \left[ \onep L(t, x, u_t(x))   \right. \nonumber \\
& \qquad  \left. -\lambda(t,x)\t  (\dot x(t,x) - \onep \tilde f(t,x,u_t(x))) \right] \, \d \mu_t(x) \, \d t \nonumber \\
& \quad+ \mathbb{E}_{0,x \sim \mu_0} [\Phi({\taum(x_0)}, x_{\taum(x_0)}) - \eta \Psi({\taum(x_0)}, x_{\taum(x_0)})],
\end{align}
where $\lambda:[0,+\infty)\times \mathbb{R}^n \to \mathbb{R}^n$, $\lambda_t\Let \lambda(t,\cdot)$ and $\eta$ are Lagrange multipliers, $u\in \mathcal{U}$, $\taum\in \mathcal{T}_{\rm ad}$ and $\mu_t\in \mathcal{P}_2(\Ree^n)$, for all $t\in[0,+\infty)$. Here $\{\mu_t\}_{t\ge 0}$ is always understood as the density path induced by the Lagrangian flow $\Tflows{v}{t}$ generated by $\dot x(t,x) = v(t,x)$ (see Remark~\ref{rem_flow_op}).

Note that, when the optimization problem \eqref{eq:st_obj_fnc}-\eqref{eq:dist_const} attains a feasible solution, vanishing boundary conditions will hold. 
\begin{assumption}[Regularity of the Lagrange Multiplier]\label{assump_reg_lm}
    We assume that the optimal Lagrange multiplier field $\lambda:[0,+\infty)\times\mathbb{R}^n\lra\mathbb{R}^n$ associated with the minimizer of the augmented objective functional \eqref{orig_mod_obj} (and hence with problem \highlight{(P2)}, admits at least weak derivatives of first order in both space and time, with all such derivatives locally integrable on $\Ree_{\ge 0}\times\mathbb{R}^n$. Accordingly, throughout the following variational analysis in Appendix \ref{Appendix:A}, $\lambda$ and its derivatives are interpreted in the weak sense.
\end{assumption}

Assumption \ref{assump_reg_lm} is consistent with the regularity required for the viscosity characterization of the value function in Assumption \ref{regularity_val} (see also Definition \ref{def:test} for the class of regular test functionals), in the sense that the Lagrange multiplier field $\lambda$ is formally interpreted as the costate, that is, as the spatial gradient of the value function.

We state our final result for the first-order optimality conditions for the distributionally-constrained stochastic optimal control and stopping problem \highlight{(P2)}.

\begin{theorem}[First-order optimality conditions with free stopping time]\label{thm:common-stopping}
Consider the transformed problem \textbf{(P2)}, i.e., \eqref{eq:st_obj_fnc}-\eqref{eq:dist_const}, with initial probability measure $\mu_0\in\mathcal{P}_2(\Ree^n)$ (with density $\rho_0$) and terminal distribution constraint \eqref{eq:dist_const}. Suppose that the regularity assumptions, Assumption \ref{regularity_dyn}, Assumption \ref{regularity_cost}, Assumption \ref{ass_reg_tau}, Assumption \ref{Assumption_5}, and Assumption \ref{assump_reg_lm} hold. Let $\taum^*\in \mathcal{T}_{\rm ad}$ be the optimal admissible time-assignment function, $u^* \in \mathcal U$ be the optimal feedback law and let $x^*(\cdot)$ denote the corresponding optimal state trajectory (a characteristic trajectory associated with $x_0 \in \supp(\mu_0)$). Define the induced alive probability measure $\mu^{a,*}_t$ for all $t \ge 0$, with the density $\rho^{a,*}(t,\cdot)$, as the solution to the Liouville equation,
\begin{align}\label{eq:liouville_star}
&\frac{\partial}{\partial t}\rho^{a,*}(t,x)=-\nabla_x\!\cdot\!\Big [\tilde f\big(t,x,u^*(t,x)\big)\,\rho^{a,*}(t,x)\Big], \nonumber \\
&\rho^{a,*}(0,x)=\rho_0(x), \quad \text{for all } (t,x) \in \mathcal{D}^{\taum^*},
\end{align}
with Dirichlet boundary condition $\rho^{a,*}(t,x)=0$ for all $(t,x) \in\mathcal{E}^{\taum^*}$, where the transformed drift $\tilde f\in \mathcal{V}_p^{\mu^*}$, i.e., the velocity field, is
\begin{align}\label{eq:tildef_star}
\tilde f(t,x^*,u^*(t,x^*)) \Let f(t,x^*,u^*(t,x^*))-\frac{\sigma_t\sigma_t\t}{2}\nabla_x\log\rho^{a,*}(t,x^*).
\end{align}
For each initial condition $x_0\in {\rm S}(\mu_0)$, let $x^*(\cdot;x_0)$ denote the associated characteristic solving the ODE
\begin{align}\label{eq:char_star}
\dot x^*(t;x_0)&=\tilde f\big(t,x^*(t;x_0),u^*(t,x^*(t;x_0))\big), \nonumber  \\
x^*(0;x_0)&=x_0, \quad \text{for all } (x_0,t) \in {\rm S}(\mu_0) \times \lcrc{0}{\taum^*(x_0)}.
\end{align}
Then, there exists a scalar multiplier $\eta\in\Ree$, and a co-state field $\lambda:\Ree_{\ge0}\times\Ree^n\to\Ree^n$, such that, for $\mu_0$-a.e. $x_0$ and for a.e. $t\in[0,\taum^*(x_0)]$, the following first-order optimality conditions hold along the characteristic, i.e., along $t \mapsto x^*(t;x_0)$, with the shorthand notation, $x^*(t)\Let x^*(t;x_0), u^*(t) \Let u^*(t,x^*(t;x_0)), \lambda(t) \Let\lambda(t,x^*(t;x_0))$, $\taum^* \Let \taum^*(x_0)$:
\begin{enumerate}[label={\textup{(\eqref{thm:common-stopping}-\alph*)}}, leftmargin=*, widest=b, align=left]
\item \highlight{Optimal control:} 
\begin{align}
\frac{\partial}{\partial u} L(t,x^*(t),u^*(t))
+ \lambda(t)\t \frac{\partial}{\partial u} f(t,x^*(t),u^*(t)) = 0.
\label{OC7} \tag{(OS1)}
\end{align}

\item \highlight{Co-state dynamics:} Characteristics to the co-state field, i.e., $t \mapsto \lambda(t)$, (with $\dot \lambda(t)\Let \frac{\d}{\d t}\lambda(t,x^*(t;x_0))$ being the material derivative along the ODE \eqref{eq:char_star}), satisfies
\begin{align}
\hspace{-1 em}\dot \lambda(t) &=
- \frac{\partial}{\partial x} L(t,x^*(t),u^*(t))
- \lambda(t)\t \frac{\partial}{\partial x} f(t,x^*(t),u^*(t))-\frac{\partial}{\partial x}\trace\left(\frac{\partial }{\partial x}\lambda(t) \frac{\sigma_t\sigma_t\t }{2}\right) \nonumber \\
&\quad
- \frac{\partial}{\partial x}\lambda(t)\t
\frac{\sigma_t\sigma_t\t}{2}\nabla_x \log \rho^{a,*}\big(t,x^*(t)\big), \, \text{a.e.,}
\label{OC6} \tag{(OS2)}
\end{align}
where $\frac{\partial}{\partial x}\lambda(t)$ denotes the Jacobian of the field $x\mapsto\lambda(t,x)$ with $\lambda$ being subject to the following transversality condition.

\item \highlight{Terminal transversality with distribution constraint:} At the optimal stopping time
\begin{align}
&\lambda(\taum\as,x^*(\taum\as))\t
=\frac{\partial}{\partial x}\Phi(\taum\as,x^*(\taum\as))
-\eta\,\frac{\partial}{\partial x}\Psi(\taum\as,x^*(\taum\as)), \label{OC5} \tag{(OS3)} \\
&\mathbb{E}_{0,x\sim \mu_{0}}\!\left[\Psi(\taum\as(x_0),x\as_{\taum\as(x_0)}) \right]=0. \label{OC9} \tag{(OS4)}
\end{align}

\item \highlight{Free-terminal-time (stopping) condition:} The transversality condition in time reads
\begin{align}
&\tilde H(\taum\as,x^*(\taum\as),u^*(\taum\as),\lambda(\taum\as,x^*(\taum\as)))
+ \frac{\partial}{\partial t}\Phi(\taum\as,x^*(\taum\as)) 
-\eta\,\frac{\partial}{\partial t}\Psi(\taum\as,x^*(\taum\as))
=0.
\label{OC8} \tag{(OS5)}
\end{align}
\end{enumerate}

The modified Hamiltonian is defined for $t\in[0,\taum^*]$ by
\begin{align}\label{mod-ham}
(t,x\as,u\as,\lambda) \mapsto \tilde H(t,x^*,u^*,\lambda)
\Let \lambda\t &f(t,x^*,u^*) + L(t,x^*,u^*)
+ \trace \left(\frac{\partial}{\partial x}\lambda\,\frac{\sigma_t\sigma_t\t}{2}\right).
\end{align}
Moreover, if we restrict $\mathcal{T}_{\rm ad}$ for \emph{common} stopping, (i.e., $\taum\in \mathcal{T}_{\rm ad}$ does not depend on the initial condition $x_0\in {\rm S}(\mu_0)$), then \eqref{OC8} is replaced with the following expectation form
\begin{align}
&\mathbb{E}_{0,x\sim \mu_{0}}\!\left[
\tilde H(\taum\as,x\as_{\taum\as},u\as_{\taum\as},\lambda(\taum\as,x\as_{\taum\as}))
+ \frac{\partial}{\partial t}\Phi(\taum\as,x\as_{\taum\as})
-\eta\,\frac{\partial}{\partial t}\Psi(\taum\as,x\as_{\taum\as})
\right] = 0.
\tag{(CS)}\label{OC4}
\end{align}
\end{theorem}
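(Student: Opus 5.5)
The plan is to compute the first variation of the augmented functional $\mathcal J$ in \eqref{orig_mod_obj} with respect to each independent argument ($u$, $\{\mu_t\}$ through the characteristics $x(\cdot;x_0)$, $\taum$, $\eta$) and to set each variation to zero at the optimum. Throughout, I would work on the truncated domains $\mathcal D^{\taum}_{a,b}$ of Assumption~\ref{ass_reg_tau} so that the divergence theorem applies, interpret $\lambda$ in the weak sense of Assumption~\ref{assump_reg_lm}, and handle score terms on the interior sets $\mathcal D^{\taum,\delta}$ before letting $\delta\to 0$, $a,b\to\infty$.

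The first step is to pull the time integral back to initial conditions using the flow $\Tflows{v}{t}$ of Remark~\ref{rem_flow_op}, writing $\int_{{\rm S}_{\mu_t}}(\cdot)\,\d\mu_t\,\d t=\mathbb E_{x_0\sim\mu_0}\int_0^{\taum(x_0)}(\cdot)(t,x(t;x_0))\,\d t$. Each characteristic then carries an ordinary Bolza cost with the constraint $\dot x=\tilde f$. I would integrate $-\lambda\t\dot x$ by parts in time along the characteristic. This produces the boundary term $-\lambda\t x$ at $\taum(x_0)$ and $-\int\dot\lambda\t x\,\d t$, where $\dot\lambda$ is the material derivative.

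The delicate step, which I expect to be the main obstacle, is the variation of the score term $-\lambda\t\frac{\sigma_t\sigma_t\t}{2}\nabla_x\log\rho^{a}$. This term depends on the whole density, so perturbing one characteristic perturbs $\rho^a$ globally. I would handle it at the Eulerian level. In $\mu_t$-integrated form it equals $-\int\lambda\t\frac{\sigma_t\sigma_t\t}{2}\nabla_x\rho^a\,\d x$, and integrating by parts (Stein's identity, Lemma~\ref{lem_stein}, with the Dirichlet condition of Assumption~\ref{remark_abs2} killing the boundary term) turns it into $\int\trace\bigl(\frac{\partial}{\partial x}\lambda\,\frac{\sigma_t\sigma_t\t}{2}\bigr)\,\d\mu_t$. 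This is linear in $\mu_t$ and explains the extra trace term in $\tilde H$ of \eqref{mod-ham}. Varying $x$ along a characteristic in this rewritten integrand produces $-\frac{\partial}{\partial x}\trace(\cdot)$. The remaining Lagrangian-to-Eulerian transport of the perturbation, again by Stein's identity, yields the term $\frac{\partial}{\partial x}\lambda\t\frac{\sigma_t\sigma_t\t}{2}\nabla_x\log\rho^{a,*}$. Collecting these with $\partial_x L+\lambda\t\partial_x f$ gives \eqref{OC6}.

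The remaining conditions follow from the other variations. Varying $u$ pointwise gives \eqref{OC7}, since the score does not depend on $u$ at first order once $\rho^a$ is frozen by the multiplier structure. The terminal spatial variation $\delta x(\taum)$ gives \eqref{OC5}, and stationarity in $\eta$ gives \eqref{OC9}. Varying $\taum(x_0)$ by $\epsilon\,\delta\tau(x_0)$ moves the upper limit of the time integral, contributing $\tilde H+\partial_t\Phi-\eta\,\partial_t\Psi$ at $\taum(x_0)$. The stopping surface $\mathcal E^{\taum}$ also moves, but its contribution vanishes because $\rho^a=0$ on $\mathcal E^{\taum}$, which is where the $C^1$ regularity of Assumption~\ref{ass_reg_tau} is used. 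Since $\delta\tau$ is an arbitrary $C^1$ function, we obtain the pointwise condition \eqref{OC8} for $\mu_0$-a.e.\ $x_0$. If $\delta\tau$ is instead restricted to constants, only the $\mu_0$-average must vanish, which gives \eqref{OC4}.
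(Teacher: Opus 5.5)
Your overall route is the one in Appendix~B: a Lagrangian rewriting of $\mathcal{J}$ along characteristics, and the divergence (Stein-type) identity \eqref{mod_obj_L_trick} replacing $\lambda^{\top}\tilde f$ by $\lambda^{\top}f+\operatorname{tr}\bigl(\frac{\partial}{\partial x}\lambda\,\Sigma_t\bigr)$, where $\Sigma_t:=\frac12\sigma_t\sigma_t^{\top}$. It then uses independent variations in $u,\mu,\tau_m,\eta$, and averages the $\delta\tau_m$ coefficient over $\mu_0$ for common stopping. The gap is at the step you single out as the crux. After the rewrite, the score contribution $\int\operatorname{tr}\bigl(\frac{\partial}{\partial x}\lambda\,\Sigma_t\bigr)\,\mathrm{d}\mu_t$ is linear in $\mu_t$. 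So its whole first variation under $\delta\rho=-\nabla_x\cdot(\rho\,\delta x)$ is $\int\frac{\partial}{\partial x}\operatorname{tr}\bigl(\frac{\partial}{\partial x}\lambda\,\Sigma_t\bigr)\,\delta x\,\mathrm{d}\mu_t$. Nothing ``remains'', and no further use of Stein's identity can supply $\bigl(\frac{\partial}{\partial x}\lambda\bigr)^{\top}\Sigma_t\nabla_x\log\rho^{a,*}$. In the paper that term comes from the \emph{non-score} part, and it exists only because $\lambda$ is a field $\lambda(t,x)$, not a function of $t$ along one path. Applying \eqref{appdx_idnt} to $-\lambda(t,x)^{\top}\bigl(\dot x(t,x)-f(t,x,u_t(x))\bigr)$, the product rule gives $-\bigl(\frac{\partial}{\partial x}\lambda\bigr)^{\top}(\dot x-f)\,\delta x$. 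This becomes $+\bigl(\frac{\partial}{\partial x}\lambda\bigr)^{\top}\Sigma_t\nabla_x\log\rho\,\delta x$ only after substituting the state constraint $\dot x-f=-\Sigma_t\nabla_x\log\rho$. The same differentiation also produces two further terms:
the term $\bigl(\frac{\partial}{\partial u}L+\lambda^{\top}\frac{\partial}{\partial u}f\bigr)\frac{\partial}{\partial x}u_t\,\delta x$, which must be removed using (OS1); and the term $-\lambda^{\top}\frac{\partial}{\partial x}\dot x\,\delta x=-\lambda^{\top}\delta\dot x$, which is what is integrated by parts along characteristics to produce $\dot\lambda$ (see \eqref{J2_old}, \eqref{J2}, \eqref{appdx_light2}). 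Your sketch collects only $\frac{\partial}{\partial x}L+\lambda^{\top}\frac{\partial}{\partial x}f$ from the non-score part, as in a classical PMP with $\lambda=\lambda(t)$, and so misses this mechanism. In your ordering (integrating by parts in time first), it shows up as $\bigl(\frac{\partial}{\partial x}\lambda\bigr)^{\top}(f-\dot x)$, from differentiating $\lambda(t,x(t))$ inside $\lambda^{\top}f$ and inside the integrated-by-parts term.

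Two smaller corrections. First, integration by parts gives $-\int_0^{\tau_m}\lambda^{\top}\dot x\,\mathrm{d}t=-\bigl[\lambda^{\top}x\bigr]_0^{\tau_m}+\int_0^{\tau_m}\dot\lambda^{\top}x\,\mathrm{d}t$, with a plus sign; with your sign, $\dot\lambda$ in (OS2) flips. Second, moving the upper limit alone does not give $\tilde H+\frac{\partial}{\partial t}\Phi-\eta\frac{\partial}{\partial t}\Psi$. You also need the total endpoint variation $\delta x(\tau_m)+\dot x(\tau_m)\,\delta\tau_m$ of \eqref{term_map_perturb} inside $\Phi-\eta\Psi$. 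You also need the $-\lambda^{\top}\dot x\,\delta\tau_m$ left over from differentiating $-\lambda^{\top}x|_{\tau_m}$ together with the upper limit of $\int\dot\lambda^{\top}x\,\mathrm{d}t$. These two $\dot x\,\delta\tau_m$ terms cancel by (OS3); the paper avoids this by taking the total endpoint variation as the independent variation. What is left is $\tilde H$ from the rewritten integrand. Finally, in the paper the motion of $\mathcal{E}^{\tau_m}$ is not an additional contribution that vanishes because $\rho^{a}=0$. Interior perturbations are required to satisfy $\delta x\cdot n=0$ on $\partial\mathrm{S}^{\tau_m}_{\mu_t}$, so all normal motion of the surface is carried by $\delta\tau_m$ through the Heaviside derivative \eqref{heav_der}. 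That is, it \emph{is} the upper-limit term producing $\tilde H$.
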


\begin{proof} 
See Appendix~\ref{Appendix:A} for the detailed variational derivation.
\end{proof}

\begin{remark}
It can be easily verified that the optimality conditions in Section~\ref{var_analysis_tr_pr} hold against the analytically derived optimal control policies from financial engineering literature, including the American put option with infinite horizon \cite[Chapter VII]{book_stop} and the optimal stopping of a Brownian bridge \cite{opt_stop_ex1}.
\end{remark}

\section{Conclusion}

In this paper, we have developed a density-based theoretical framework for nonlinear stochastic optimal control problems with optimal stopping and terminal distributional constraints. For a state-independent diffusion, and under mild regularity assumptions on the dynamics, the cost functional, and the regularity of the stopping boundary, we have investigated the well-posedness of the distributional stochastic HJB equation, and the solvability of the associated Dirichlet problem for the Fokker--Planck equation. 

Under this setting, we showed that the controlled stochastic dynamics can be reformulated, via a Fokker--Planck transformation, as a deterministic system driven by a score-corrected velocity field. We then proved that the reformulated problem satisfies the same distributional stochastic HJB equation as the original problem, thereby providing an equivalent deterministic representation. Building on this alternative problem formulation, we introduced a distributionally constrained optimal control problem with initial-state-dependent terminal-time assignment and derived, through variational analysis, a system of first-order necessary conditions for both the common free-final time case and the more general state-dependent stopping-time (space--time boundary) setting.

Future work will focus on the development of scalable, sampling-based numerical methods for high-dimensional problems, including indirect physics-informed approaches that incorporate the derived optimality conditions together with score-matching algorithms, as well as direct sampling-based semi-Lagrangian methods with neural approximations of the adjoint and density fields.

\newpage
\appendix
\section{List of Notations}\label{appen:notation}
For ease of reference, Table \ref{table_notation} summarizes the main notation employed throughout the article.

\begin{longtable}{>{\raggedright\arraybackslash}p{0.24\textwidth} >{\raggedright\arraybackslash}p{0.70\textwidth}}
\caption{Notations}
\label{table_notation}\\
\toprule
Symbol & Meaning \\
\midrule
\endfirsthead

\toprule
Symbol & Meaning \\
\midrule
\endhead

\bottomrule
\endfoot

\textbf{Basic spaces and measures} & \\[0.2em]
$\mathbb{N}^*$, $\mathbb{Z}$ & Positive integers and integers, respectively. \\
$\Ree^n$ & $n$-dimensional Euclidean state space. \\
$\|\cdot\|$ & Euclidean norm. \\
$\mathcal{B}(\Ree^n)$ & Borel $\sigma$-algebra on $\Ree^n$. \\
$\mathcal{P}(\Ree^n)$ & Set of probability measures on $\Ree^n$. \\
$\mathcal{P}_p(\Ree^n)$ & Set of probability measures on $\Ree^n$ with finite $p$-th moment. \\
$\mathcal{P}_{\rm sub}(\Ree^n)$ & Set of sub-probability measures on $\Ree^n$. \\
$\mathcal{P}_{p,\rm sub}(\Ree^n)$ & Set of sub-probability measures on $\Ree^n$ with finite $p$-th moment. \\
$\mu \ll \nu$ & Absolute continuity of $\mu$ with respect to $\nu$. \\
$\mathrm{Leb}^n$ & $n$-dimensional Lebesgue measure. \\
$\mathrm{Leb}^n\!\restriction_D$ & Restriction of Lebesgue measure to a measurable set $D \subseteq \Ree^n$. \\
$\mathcal{W}_2$ & $2$-Wasserstein distance on $\mathcal{P}_2(\Ree^n)$. \\
${\rm S}_\mu$ & Support of the probability measure $\mu$. \\[0.4em]

\textbf{Probability and stochastic processes} & \\[0.2em]
$(\Omega,\mathcal{F},\mathbb{P})$ & Underlying complete probability space. \\
$\omega \in \Omega$ & Sample point. \\
$\{\mathcal{F}_t\}_{t\ge 0}$ & Filtration generated by the driving Brownian motion. \\
$x_t \in \Ree^n$ & State process at time $t\ge 0$. \\
$u_t \in \Ree^m$ & Control process at time $t\ge 0$. \\
$w_t$ & $n$-dimensional Brownian motion at time $t\ge 0$. \\
$\sigma_t \in \Ree^{n\times n}$ & Time-varying diffusion matrix at time $t\ge 0$. \\
$f(\cdot)$ & Drift vector field in the stochastic dynamics. \\
$\mu_t$ & Probability law of $x_t$ at time $t\ge 0$. \\
$\mathbb{P}^{t,x}$ & Regular conditional probability given $x_t = x$. \\
$\mathbb{E}^{\mathbb{P}}[\cdot]$ & Expectation with respect to $\mathbb{P}$. \\
$\mathbb{E}^{\mathcal{F}_t}_{x}[\cdot]$ & Conditional expectation given deterministic initial condition $x_t=x$. \\
$\mathbb{E}^{\mathcal{F}_t}_{x\sim \mu}[\cdot]$ & Conditional expectation when $x_t \sim \mu$ (see \eqref{backgrd:pw:expect}). \\
$\mathbb{E}_{t,x\sim\mu}[\cdot]$ & Integration with respect to a given measure $\mu$ at time $t$ in the deterministic/PDE formulation (see Remark \ref{backgrd:det_PDE:expect}). \\[0.4em]

\textbf{Function spaces and derivatives} & \\[0.2em]
$C^k(\Ree^n)$ & Space of $k$ times continuously differentiable functions on $\Ree^n$. \\
$C^k_{\mathrm{loc}}(\Ree^n)$ & Space of functions whose restriction to each compact subset is $k$ times continuously differentiable. \\
$\mathcal{L}^1(\Omega,\filtration_t,\mathbb P;\mathbb R^k)$ & Space of absolutely integrable and $\filtration$-measurable random vectors $\mathcal{L}^1(\Omega,\filtration_t,\mathbb P;\mathbb R^k) \ni Z:\Omega\lra\mathbb R^k$. \\
$\mathcal{L}^\infty(\mathcal{D};\mathcal{R})$ & Space of measurable essentially bounded maps from $\mathcal{D}$ to $\mathcal{R}$. \\
$\dfrac{\delta \mathcal{J}}{\delta \mu}(t,\mu)(x)$ & Linear functional derivative of a functional $\mathcal{J}$ with respect to $\mu$. \\
$\partial_\mu \mathcal{J}(t,\mu)(x)$ & Lions derivative, formally $\partial_\mu \mathcal{J}(t,\mu)(x)=\nabla_x \dfrac{\delta \mathcal{J}}{\delta \mu}(t,\mu)(x)$. \\[0.4em]

\textbf{Control and stopping sets} & \\[0.2em]
$\Pi$ & Class of measurable feedback policies $\pi$ (see \eqref{backgrd:mapping:sets}). \\
$\mathcal{U}$ & Admissible time-varying control set induced by feedback laws in $\Pi$ (see \eqref{backgrd:mapping:sets}). \\
$\mathcal{G}$ & Class of admissible stopping boundary functions $\mathcal{G} \ni g:\Ree_{\ge 0}\times\Ree^n \lra \Ree$ (see \eqref{backgrd:mapping:sets}). \\
$\tau_g$ & Stopping time induced by $g\in \mathcal{G}$, i.e. the first hitting time of the set $\{g\ge 0\}$. \\
$\mathcal{D}^g$ & Space-time continuation (alive) region associated with $g\in \mathcal{G}$ (see \eqref{backgrd:alive:region}). \\
$\mathcal{D}_t^g$ & Spatial continuation set at time $t$ (see \eqref{backgrd:boundary:sets}). \\
$\partial \mathcal{D}_t^g$ & Time-slice boundary of $\mathcal{D}^g$ at time $t$ (see \eqref{backgrd:boundary:sets}). \\
$\partial \mathcal{D}^g$ & Space-time stopping boundary associated with $g\in \mathcal{G}$ (see \eqref{backgrd:boundary:sets}). \\
$\bar{\mathcal{D}}^g$ & Distributional continuation region (see \eqref{cntr:dist_alive:region}).  \\[0.4em]
\textbf{Costs and value functions} & \\[0.2em]
$L(t,x,u)$ & Running (Lagrangian) cost. \\
$\Phi(t,x)$ & Terminal (Mayer) cost. \\
$\Psi(t,x)$ & Terminal distributional constraint function in the constrained formulation (see \eqref{eq:dist_const}). \\
$V(t,x)$ & Pointwise value function for the original stochastic optimal stopping problem (see \eqref{single_V}). \\
$V^g(t,x)$ & Pointwise value function for fixed stopping boundary $g$ (see \eqref{single_Vg}). \\
$\bar V^g(t,\mu)$ & Distributional value function for fixed stopping boundary $g$ (see \eqref{eqn3}). \\
$\bar V(t,\mu)$ & Distributional value function for the original problem, optimized over $g\in\mathcal{G}$ (see \eqref{eqn5}). \\[0.4em]

\textbf{Generators and test functionals} & \\[0.2em]
$C^{1,2}_2$ & Test-functional class used for the distributional sHJB equation (see Definition \ref{def:test}). \\
$\Xi$ & Generic test functional with $C^{1,2}_2 \ni \Xi: \Ree_{\ge 0}\times \mathcal{P}_2(\Ree^n) \lra \Ree$. \\
$\mathcal{L}_u$ & Classical second-order generator associated with the controlled SDE where $u\in\Ree^m$ (see \eqref{sHJB-generator}). \\
$\bar{\mathcal{L}}_u$ & Distributional second-order generator acting on functionals of measures with measurable feedback policies $u\in \Pi$ (see \eqref{dist_sHJB-generator}). \\[0.4em]

\textbf{Alive / absorbed measure notation (see \S\ref{subs:subprob})} & \\[0.2em]
$\dagger$ & Cemetery (absorbing) state added to the state space. \\
$\tilde x_t$ & Absorbed state process, equal to $x_t$ while alive and $\dagger$ after stopping. \\
$\mu_t^a$ & Alive component of the law of $x_t$, a sub-probability measure on the continuation region. \\
$\rho^a(t,x)$ & Alive probability density satisfying $\d \mu_t^a = \rho^a(t,x)\, \d x$. \\
$\tilde \rho(t,\rm E)$ & Cumulated total exit probability mass through some Borel set $\rm E \subseteq \partial \mathcal{D}^g$ up until the time $t\ge 0$. \\
$J(t,x)$ & Probability flux associated with the alive density. \\
$\alpha(t,x)$ & Outward boundary flux density on the stopping boundary. \\
$J^{\mathrm{out}}(t,x,u)$ & Outgoing flux used in the application of the divergence theorem (see \eqref{J_out}). \\[0.4em]
$\tilde f(t,x,u)$ & Transformed nonlocal drift / velocity field (see \eqref{vel_field}). \\[0.4em]

\textbf{Final-time assignment formulation} & \\[0.2em]
$\mathcal{T}$ & Set of continuously differentiable final-time assignment functions $\mathcal{T}\ni \taum:\Ree^n\lra\Ree_{\ge 0}$ (see \eqref{fpt_set_tau}). \\
$\mathcal{T}_{\mathrm{ad}}$ & Admissible subclass of $\mathcal{T}$ for which the alive domain and Dirichlet problem are well posed (see Definition \ref{fpt_admis_tau}). \\
$\mathcal{D}^{\taum}$ & Alive space-time domain induced by $\taum \in \mathcal{T}_{\mathrm{ad}}$ (see Definition \ref{fpt_admis_tau}). \\
$\mathcal{E}^{\taum}$ & Stopping surface induced by $\taum \in \mathcal{T}_{\mathrm{ad}}$ (see Definition \ref{fpt_admis_tau}). \\
$\partial \mathcal{D}_t^{\taum}$ & Time-sliced boundary of the alive set at time $t$ (see Definition \ref{fpt_admis_tau}). \\
$\one_{t<\taum(x_0)}$ & Indicator of whether the trajectory initialized at $x_0$ is still alive at a given time $t$ for $\taum \in \mathcal{T}_{\mathrm{ad}}$. \\[0.4em]

\textbf{Deterministic transformed value functions} & \\[0.2em]
$V_d^{t_f,\mu}(t,x)$ & Pointwise value function of the deterministic transformed problem with fixed final time $\tf$ and prescribed density path $\{\mu_s\}_{s\ge t}$ (see \eqref{relation_d_ll}). \\
$\bar V_d^{\taum}(t,\mu)$ & Distributional value function of the deterministic transformed problem for a fixed time assignment function $\taum \in \mathcal{T}_{\mathrm{ad}}$ (see \eqref{relation_ll}). \\
$\bar V_d(t,\mu)$ & Distributional value function of the deterministic transformed problem optimized over $\taum\in\mathcal{T}_{\mathrm{ad}}$ (see \eqref{relation_lll}). \\[0.4em]

\textbf{Measure flows (see \S\ref{opt_unc_form})} & \\[0.2em]
$v(t,x)$ & Generic velocity field in the continuity-equation formulation. \\
$v_t(\cdot)$ & Spatial map $x \mapsto v(t,x)$ at fixed time $t\ge 0$. \\
$\rm{Lip}(v_t,A)$ & Lipschitz constant of $v_t$ on the set $A$. \\
$\mathcal{V}_p^\mu$ & Class of admissible velocity fields $v\in \mathcal{V}_p^\mu$ with some $p>1$ satisfying the required integrability and local Lipschitz conditions along $\{\mu_t\}_{t\ge 0}$. \\
$\Tflow{u,g}{h}$ &Flow map generated by the feedback control $u\in \mathcal{U}$ and stopping rule $\tau_g$ for some $g\in \mathcal{G}$ over a time increment $h$. \\
$\Tflow{v}{h}$ & Flow map generated by the velocity field $v$ over a time increment $h$ (see Definition \eqref{rem_flow_op}). \\
$\Tflows{v}{h} \mu_0$ & Push-forward of the initial measure under the flow $T_t^v$. \\
$\Tflows{-v}{h}$ & Inverse pullback operator associated with the flow $\Tflow{-v}{h}$. \\[0.4 em]

\textbf{Variational formulation (see \S\ref{opt_auglag_form})} & \\[0.2em]
$\lambda(t,x)$ & Lagrange multiplier / costate field in the augmented Lagrangian formulation. \\
$\eta$ & Scalar Lagrange multiplier associated with the terminal distributional constraint. \\
$\mathcal{J}$ & Augmented objective functional in the unconstrained variational reformulation (see \eqref{orig_mod_obj}). \\
$\tilde{H}(t,x,u,\lambda)$ & Modified Hamiltonian associated with the transformed dynamics (see \eqref{mod-ham}).
\end{longtable}

\section{Proof of Theorem \ref{thm:common-stopping}} \label{Appendix:A}
\setcounter{equation}{0}
\renewcommand{\theequation}{B.\arabic{equation}}
\makeatletter
\renewcommand{\tagform@}[1]{(#1)}
\makeatother
We organize the proof of Theorem~\ref{thm:common-stopping} into the following steps.\footnote{We adopt this structure following \cite[Chapter 4, \S 4.2]{ref:liber}.}

\noindent\textbf{Step 1: Euler and Lagrangian Formulations.}
Using the superposition principle \cite[Theorem 3.4]{superposition} (see \S\ref{opt_unc_form}), we pass from the Eulerian formulation of the augmented objective functional \ref{orig_mod_obj} to an equivalent Lagrangian representation along the characteristic trajectories, written as an expectation with respect to the initial probability measure $\mu_0$.

\noindent\textbf{Step 2: Variational perturbations.}
We introduce admissible first-order perturbations of all the arguments of the augmented functional \ref{orig_mod_obj}, namely the control, Lagrange multiplier for dynamics and terminal distributional constraint, time-index family of probability measures and the final-time assignment function. In particular, the probability measure perturbation is described through Wasserstein-type tangent variations, while the stopping-time perturbation is handled through the Heaviside representation of stopped integrals.

\noindent\textbf{Step 3: G\^ateaux Derivative of $\mathcal{J}$ and First-variation Decomposition.}
We define the perturbed functional and decompose its increment into three contributions: the variation with fixed probability measure, the variation with respect to the probability measure flow, and the endpoint variation.

\noindent\textbf{Step 4: Evaluation of the probability measure-flow variation.}
Using the relation between density perturbations and spatial variations, together with the divergence theorem and the chain rule, we compute the contribution of the measure variation to the first variation of the functional. The perturbation of the final-time assignment function contributes a boundary term at the final-time.

\noindent\textbf{Step 5: Evaluation of the fixed-measure and endpoint variations.}
Next, we compute the fixed-measure contribution by first-order Taylor expansion and evaluate the endpoint contribution through the total variation of the terminal map. This yields the terms associated with control, Lagrange multiplier for both dynamics and terminal distributional constraint and final-time assignment functions.

\noindent\textbf{Step 6: Collection and simplification of first-order terms.}
We collect all first-order contributions into a single expression for the G\^ateaux derivative for both Eulerian formulation \eqref{zero_var_rhs} and the Lagrangian formulation \eqref{first_var_rhs}. Then, using the state equation \eqref{J2}, the identity obtained from the diffusion term \eqref{mod_obj_L_trick}, and an integration-by-parts argument along characteristics \eqref{appdx_light2}, we rewrite the first variation in the compact form \eqref{first_var_rhs_2}.

\noindent\textbf{Step 7: Derivation of the necessary conditions.}
Since the admissible perturbations are independent, the vanishing of the first variation implies that each coefficient in \eqref{first_var_rhs_2} must vanish separately. This yields the state dynamics, the optimality condition with respect to the control, the adjoint equation along the characteristics, optimal final-time (stopping) condition, the terminal transversality condition, and the distribution constraint.

\noindent\textbf{Step 8: Common stopping as a special case.}
Finally, when the stopping rule is common to all realizations, so that $\taum$ is scalar-valued, the stopping variation is no longer pointwise in the initial condition. In that case, the corresponding optimality condition is obtained by integrating the coefficient of $\delta \taum$ in \eqref{first_var_rhs_2} with respect to $\mu_0$, which gives \ref{OC4}.

The remainder of this appendix follows the above roadmap. We first establish the first-order necessary conditions for a initial-condition-dependent final-time assignment function $\taum \in \mathcal{T}_{\rm ad}$, and then specialize the argument to the common-stopping case by restricting $\taum$ to be scalar.

\subsection{Euler and Lagrangian Formulations}

From the superposition principle \cite[Theorem 3.4]{superposition} (see \S\ref{opt_unc_form}) the family $\{\mu_t\}_{t \ge 0}$ and velocity field $v \in \mathcal V^{\mu}_{p}$ induce a probability measure on the path space concentrated on a.e. ODE solutions.  Based on this fact, we can rewrite our objective functional, i.e. \ref{orig_mod_obj}, from an alternative point of view. To do this, we exploit Fubini's theorem to rewrite the integrals over $(t,x)$ against $\mu_t$ as expectations over initial conditions $x_0\sim\mu_0$ of line integrals along trajectories, yielding the following equivalent objective functional
\begin{align} \label{appdx_equiv}
&\mathcal{J}(\cdot)  = \mathbb{E}_{0,x \sim \mu_0} \int_0^{\taum(x_0)} \left[L(t, x, u_t) - \lambda(t,x)\t  \times \, (\dot x(t,x) - \tilde f(t,x,u_t(x))) \right] \, \d t \nonumber \\
& \quad+ \mathbb{E}_{0,x \sim \mu_0} [\Phi({\taum(x_0)}, x_{\taum(x_0)}) - \eta \Psi({\taum(x_0)}, x_{\taum(x_0)})].
\end{align}
The expression \eqref{appdx_equiv} should be understood as a Lagrangian representation of the Eulerian cost \ref{orig_mod_obj}.

We will employ the notations established in \S\ref{var_analysis_tr_pr}.
Let $\d \mu^{\taum}_t(x)\Let \onep \d \mu_t(x)$, which will be represented with the probability density function $\rho^{\taum}(\cdot)$, i.e., $\d \mu^{\taum}_t(x)\Let\rho^{\taum}(t,x) \, \d x$ and $\d \mu^{\taum}_t(x)=\rho^{\taum}(t,x) \,\d x$ and $\rho^{\taum}(t,x)=\rho(t,x), \forall x\in {\rm S}_{\mu_t}^{\taum}$, zero otherwise. We denote the support of the alive probability measure $\mu^{\taum}_t(x)$ at space-time $(t,x)$ as ${\rm S}^{\taum}_{\mu_t}$. 

Observe that, under the Assumption \ref{Assumption_5},
\begin{align} \label{mod_obj_L_trick}
&\int_{{\rm S}_{\mu_t}} \lambda(t,x)\t  \tilde f(t,x,u_t(x)) \, \d \mu^{\taum}_t(x) \nonumber
 \\
&=\int_{{\rm S}_{\mu_t}} \lambda(t,x)\t  \left[f(t,x,u_t(x))-\frac{\sigma_t\sigma_t\t }{2}\nabla_x \log{\rho(t,x)}\right] \rho^{\taum}(t,x) \, \d x, \nonumber \\
&\overset{\text{div. thm.}}{=} \int_{{\rm S}^{\taum}_{\mu_t}} \left[\lambda(t,x)\t  f(t,x,u_t(x)) + \trace\left(\frac{\partial }{\partial x}\lambda(t,x)\frac{\sigma_t\sigma_t\t }{2}\right) \right] \, \d \mu^{\taum}_t(x),
\end{align}
where we used divergence theorem (assuming the regularity for the multiplier field $\lambda$ based on Assumption \ref{assump_reg_lm}) together with vanishing boundary conditions. 

The representation \eqref{appdx_equiv} together with the Eulerian objective functional \ref{orig_mod_obj} and the identity \eqref{mod_obj_L_trick} form the basis on which we conduct the variational analysis. To this end, we next introduce admissible perturbations of the arguments of the objective functionals \ref{orig_mod_obj} and \eqref{appdx_equiv}.

\subsection{Variational Perturbations}

In order to define the G\^ateaux derivative of $\mathcal{J}$, we take sufficiently regular variations for all $t \in[0,+\infty)$, $\delta u_t$, $\delta \lambda_t$, $\delta \taum$, $\delta \eta$, such that, for all sufficiently small $\eps>0$, the perturbed control satisfies $\{u_t+\eps \delta u_t \}_{t\ge 0}\in\mathcal{U}$. For variations, define $\lambda^\eps_t=\lambda_t+\eps \delta \lambda_t$, $u^\eps_t=u_t+\eps \delta u_t$, $\taum^\eps=\taum+\eps \delta \taum$, and $\eta^\eps=\eta + \eps \delta \eta$.

\subsubsection{Measure-Flow Variations} \label{sss_mfv} Based on Otto’s formal Riemannian calculus on Wasserstein space (see \cite[Sec. 1.3, and Sec. 4.4]{otto} and \cite[Ch. 8]{book_gr}), we take mass-preserving variations, or in other words, \emph{interior Otto perturbations} where $\mu_t$ is defined. Specifically, for each fixed $t\ge 0$, the first-order variation $\delta\mu_t$ is represented by a smooth variation $x\mapsto \delta x(t,x)$ on $\Ree^n$ and is understood as an interior Wasserstein-type perturbation, while the normal motion of the stopping surface is treated separately through an admissible variation of the final-time assignment function $\delta\tau_m$ in \S \ref{sss_ftv}.

We define the \emph{interior Otto perturbation} to be the first-order variation generated by a smooth transport field $x\mapsto \delta x(t,x)$
such that
\begin{align*}
    \d(\delta\mu_t)(x)=\delta\rho(t,x)\, \d x,
\quad
\delta\rho(t,x)=-\nabla_x\cdot\big(\rho(t,x)\delta x(t,x)\big)
\quad
x\in {\rm S}_{\mu_t}^{\tau_m},
\end{align*}
in the sense of distributions. In addition, for $\delta \mu_t$ to be a mass-preserving perturbation, we require that the transport
field has vanishing normal trace on the stopping boundary induced by $\taum$, i.e.,
\begin{align*}
    \delta x(t,x)\cdot n(t,x)=0 \quad\text{for  all} \ x\in \partial {\rm S}_{\mu_t}^{\tau_m},
\end{align*}
in the trace sense. Since $\rho^{\taum}(t,x)=0$ on $\partial {\rm S}_{\mu_t}^{\tau_m}$, it follows that
\begin{align*}
\delta\rho^{\taum}(t,x)\big|_{\partial {\rm S}_{\mu_t}^{\tau_m}}&=
-\nabla_x\rho^{\taum}(t,x)\cdot \delta x(t,x) \\
&=-\partial_n\rho^{\taum}(t,x)\,(\delta x(t,x)\cdot n(t,x))
=0 \quad\text{for  all} \ x\in \partial {\rm S}_{\mu_t}^{\tau_m},
\end{align*}
again in the trace sense with $\partial_n \rho^{\taum}(t,x) \Let \nabla_x \rho^{\taum}(t,x) \t n(t,x)$. Therefore, the interior Otto perturbations do not move the stopping surface to first order, and accordingly, preserve the stopping surface and the total mass to first order. Also note that $\delta \mu_0=0$, and the associated velocity field $v(t,x)\Let\dot x(t,x)$ satisfies the regularity condition \ref{mu_suff_cond}.

In summary, we define the class of \emph{admissible interior tangent perturbations} at $\mu_t$ by
\begin{align} \label{mu_var_rel}
\mathcal{T}_{\mu_t}^{\tau_m,\mathrm{int}}
=
\left\{
\delta\mu_t \;\middle|\;
\begin{array}{ll}
\d(\delta\mu_t)(x)=\delta\rho(t,x)\,\d x, & \\[0.3em]
\delta\rho(t,x)=-\nabla_x\cdot\big(\rho(t,x)\delta x(t,x)\big),
& \text{in } {\rm S}_{\mu_t}^{\tau_m},\\[0.3em]
\delta x(t,x)\cdot n(t,x)=0,
& \text{on } \partial {\rm S}_{\mu_t}^{\tau_m}
\end{array}
\right\}.
\end{align}

Let the varied probability measure be denoted as $\mu_t^\eps=\mu_t+\eps \delta \mu_t$, with $\delta \mu_t\in \mathcal{T}_{\mu_t}^{\tau_m,\mathrm{int}}$. 
Based on the variation over the flow field $\mu_t$, the trajectories are perturbed as $x_t \lra x_t+\eps \delta x(t,x_t)$ and, since $\frac{\d}{\d t}$ is a linear operator, $\dot x_t \ra \dot x_t+\eps \delta \dot  x(t,x_t)$, where, using chain rule, 
\begin{align*}
    \delta \dot x(t,x_t)=\frac{\partial}{\partial x}\dot x(t,x_t) \delta x(t,x_t) \quad \text{for a fixed }t.
\end{align*}
Consequently, for each fixed $t\in[0,+\infty)$, the integration over $\delta \mu_t$ with any sufficiently regular function $\xi:\mathbb{R}^n\to \mathbb{R}$, has the following relation
\begin{align} \label{appdx_idnt}
    \int_{{\rm S}_{\mu_t}} \xi(x) \,  \d (\delta\mu_t)(x)&=\int_{{\rm S}_{\mu_t}} \xi(x) \delta \rho(t,x) \,  \d x 
    =-\int_{{\rm S}_{\mu_t}} \xi(x) \nabla_x \cdot(\rho(t,x)\delta x(t,x)) \,  \d x \nonumber \\
    &\overset{\text{div. thm.}}{=}\int_{{\rm S}_{\mu_t}} \frac{\partial}{\partial x} \xi(x)\t \delta x(t,x) \, \d \mu_t(x),
\end{align}
which follows from a simple application of divergence theorem and vanishing and stopping boundary conditions. When $\xi$ is a function of $\dot x(t,x)$ instead, we simply have, for fixed $t$, from the chain rule
\begin{align} \label{app_chain_dot_x}
\frac{\partial}{\partial x} \xi(\dot x(t,x))\t \delta x(t,x)&=\frac{\partial}{\partial \dot x} \xi(\dot x(t,x))\t \frac{\partial}{\partial x} \dot x(t,x) \delta x(t,x)=\frac{\partial}{\partial \dot x} \xi(\dot x(t,x))\t \delta \dot x(t,x).
\end{align}

All first-order normal motion of the stopping surface is represented separately by the variation $\delta\tau_m$ through the Heaviside formulation in \S \ref{sss_ftv}.

\subsubsection{Final-Time Variations} \label{sss_ftv} In view of \S \ref{sss_mfv}, the variation $\delta\tau_m$ is the only source of first-order normal displacement of the stopping surface. To take the variation against $\taum \in \mathcal{T}_{\rm ad}$, we rewrite the stopped integrals using the Heaviside indicator, $\one_{t<\taum(x_0)}$, for any $x_0\in{\rm S}(\mu_0)$. We interpret the derivative with respect to $\taum$ in the sense of distributions. Then, in the distributional sense, the derivative of a Heaviside step function is a Dirac delta, i.e.,
\begin{align} \label{heav_der}
   & \frac{\one_{t<\taum(\Tflow{v}{-t}(x))+\eps \delta \taum(\Tflow{v}{-t}(x))}-\one_{t<\taum(\Tflow{v}{-t}(x))}}{\eps} 
    \xrightarrow[\eps \to 0]{{\rm dist.}} \delta\bigl(t=\taum(\Tflow{v}{-t}(x)))\delta \taum(\Tflow{v}{-t}(x)\bigr),
\end{align}
where $\xrightarrow{{\rm dist.}}$ denotes convergence in distribution. A standard justification is obtained by replacing the Heaviside indicator with a smooth mollified approximation (e.g., see Appendix~C of \cite{fleming_soner}) and then passing to the limit in the sense of distributions.

\subsubsection{Endpoint Variations} Let the perturbed state trajectory be $x^\eps$ for the perturbed final time $\taum^\eps$ and perturbed probability measure $\mu^\eps$ with the corresponding velocity field $v^\eps\in \mathcal V^{\mu^\eps}_{p}$ starting from the same initial condition $x_0\in{\rm S}({\mu_0})$. Moreover, due to the perturbation of the terminal states, we denote the differential of the terminal map $\delta \Tflow{v}{\taum(x_0)}(x_0)$ with a given velocity field $v\in\mathcal V^{\mu}_{p}$ associated with $\{\mu_t \}_{t \ge 0}$, admissible time assignment function $\taum\in \mathcal{T}_{\rm ad}$ and initial condition $x_0\in {\rm S}(\mu_0)$. Here, $\delta\Tflow{v}{\taum(x_0)}(x_0)$ is used to denote the total endpoint variation, i.e.,
\begin{align}\label{term_map_perturb}
    \delta\Tflow{v}{\taum(x_0)} &\Let \lim_{\eps\lra 0} \frac{1}{\eps} \Bigl(\Tflow{v^\eps}{\taum^\eps(x_0)}-\Tflow{v}{\taum(x_0)}\Bigr) \nonumber \\
    &=\delta x(\taum(x_0),x_{\taum(x_0)}) + \dot x(\taum(x_0),x_{\taum(x_0)}) \delta \taum(x_0).
\end{align}

Having introduced the admissible variations of the arguments of the objective functional and the associated endpoint variations, we now proceed to compute the G\^ateaux derivative of our objective functional. For this purpose, we first start by defining the perturbed functional and the associated increment.

\subsection{G\^ateaux Derivative of $\mathcal{J}$ and First-variation Decomposition}

For notational convenience we define the tuple \(\Upsilon \Let \Bigl(\{\lambda_t\}_{t\in[0,+\infty)},\{u_t\}_{t\in[0,+\infty)},\{\mu_t\}_{t\in[0,+\infty)},\taum,\eta\Bigr)\) and recall the definition of the objective function \(\mathcal{J}(\cdot)\) from \ref{orig_mod_obj}.

We define the objective functions
\begin{align}
&\mathcal{J}^\eps (\Upsilon)\Let \mathcal{J}\Bigl(\{\lambda_t^\eps\}_{t\in[0,+\infty)},\{u_t^\eps\}_{t\in[0,+\infty)},\{\mu_t^\eps\}_{t\in[0,+\infty)},\taum^\eps,\eta^\eps\Bigr), \\
    &\Delta^\eps \mathcal{J} (\Upsilon) \Let \mathcal{J}^\eps (\Upsilon) -\mathcal{J} (\Upsilon)\label{d_eps_J},
\end{align}
and define the G\^ateaux derivative of $\mathcal{J}$ \ref{orig_mod_obj} as 
\begin{align}\label{app_gat_def}
    &\delta \mathcal{J}\Bigl(\{\lambda_t\}_{t\in[0,+\infty)},\{u_t\}_{t\in[0,+\infty)},\{\mu_t\}_{t\in[0,+\infty)},\taum,\eta\Bigr) = \nonumber \\
    &\lim_{\eps\ra 0} \frac{1}{\eps} \Delta^\eps \mathcal{J}\Bigl(\{\lambda_t\}_{t\in[0,+\infty)},\{u_t\}_{t\in[0,+\infty)}, \{\mu_t\}_{t\in[0,+\infty)},\mu_{\taum},\taum,\eta\Bigr).
\end{align}
Based on our observation \eqref{mod_obj_L_trick}, also define,
\begin{align}
&\mathcal{J}_{\rm int}^\eps\bigl(t,x,\dot x,u_t^\eps,\lambda^\eps,\taum^\eps\bigr)= \onepe  L(t, x, u_t^\eps)   \nonumber \\
& \hspace{2 em}- \lambda^\eps(t,x)\t \, (\dot x(t,x) - \onepe f(t,x,u_t^\eps)) + \trace\left(\frac{\partial }{\partial x}\lambda^\eps(t,x)\frac{\sigma_t\sigma_t\t }{2}\right), \\
&\mathcal{J}_{\rm end}^\eps(t,\taum^{\eps},\eta^\eps) = \mathbb{E}_{0,x \sim \mu_0} \Big[\Phi\bigl({\taum^{\eps}(x_0)}, x_{{\taum^\eps}(x_0)}\bigr) - \eta^\eps \Psi\bigl({{\taum^\eps}(x_0)}, x_{{\taum^\eps}(x_0)}\bigr) \Big].
\label{J_end_eps}
\end{align}
Then, it follows from definition \eqref{d_eps_J},
\begin{align}
\Delta^\eps \mathcal{J}(\cdot) &= \int_0^{+\infty}\int_{{\rm S}_{\mu_t}} (\mathcal{J}_{\rm int}^\eps(\cdot) \, \d \mu^\eps_t(x)-\mathcal{J}_{\rm int}^0(\cdot) \, \d \mu_t(x)) \, \d t + \mathcal{J}_{\rm end}^\eps(\cdot)-\mathcal{J}_{\rm end}^0(\cdot)\\
&=\underbrace{\int_0^{+\infty} \int_{{\rm S}_{\mu_t}} (\mathcal{J}_{\rm int}^\eps(\cdot) -\mathcal{J}_{\rm int}^0(\cdot)) \, \d \mu_t(x)) \, \d t}_{\text{variation with fixed measure}} \nonumber \\
&+\underbrace{\int_0^{+\infty} \int_{{\rm S}_{\mu_t}} \mathcal{J}_{\rm int}^\eps(\cdot) \, (\d \mu_t^\eps(x)-\d \mu_t(x)) \, \d t}_{\text{variation wrt. measure}} \,+ \,\mathcal{J}_{\rm end}^\eps(\cdot)-\mathcal{J}_{\rm end}^0(\cdot) \label{d_e_J_2}.
\end{align}
From \eqref{app_gat_def}, the G\^ateaux derivative of $\mathcal{J}$ can be evaluated by applying a Taylor series expansion in terms of $\eps$ for the variations. Note that $\eps \to \mu_t^\eps$ is differentiable in weak sense for any sufficiently smooth test function $\xi:\mathbb{R}^n\to \mathbb{R}$, e.g. satisfying \ref{A4}. Then, Taylor series expansion can be applied in terms of $\eps$ to obtain,
\begin{align*}
    \int_{{\rm S}_{\mu_t}} \xi(x) \, \d \mu_t^\eps(x) = &\int_{{\rm S}_{\mu_t}} \xi(x) \, \d \mu_t(x) +\eps \int_{{\rm S}_{\mu_t}} \xi(x) \, \d (\delta \mu_t)(x)+o(\eps),
\end{align*}
where $o(\eps)$ denotes the higher-order terms in the Taylor series expansion. 
In a similar fashion, with the regularity assumptions \ref{A1}-\ref{A4}, dominated convergence theorem implies that we can pass the limit $\eps\to 0$ inside all time-space integrals. 

Define the fixed-measure variation term from the first expression in \eqref{d_e_J_2} as,
\begin{align} 
    &\delta \mathcal{J}_{\text{int}}(\cdot) =\lim_{\eps\to 0} \frac{1}{\eps}\bigl(\mathcal{J}_{\text{int}}^\eps(\cdot)-\mathcal{J}_{\text{int}}^0(\cdot)\bigr), \label{def_J_int_0_var} \\
    &\delta \mathcal{J}_{\text{end}}(\cdot) =\lim_{\eps\to 0}\frac{1}{\eps}\bigl(\mathcal{J}_{\rm end}^\eps(\cdot)-\mathcal{J}_{\rm end}^0(\cdot)\bigr) \label{def_J_end_0_var},
\end{align}
where we suppressed arguments for better readability. 
Therefore, applying a Taylor series expansion for the rest of the terms from \eqref{d_e_J_2}, \eqref{app_gat_def} becomes,
\begin{align} \label{del_J_fin_step}
&\delta \mathcal{J}\bigl(\{\lambda_t\}_{t\in[0,+\infty)},\{u_t\}_{t\in[0,+\infty)},\{\mu_t\}_{t\in[0,+\infty)},\taum,\eta\bigr) = \nonumber \\
&\quad =\int_0^{+\infty} \int_{{\rm S}_{\mu_t}} \mathcal{J}_{\rm int}^0(\cdot) \, \d (\delta \mu_t)(x) \, \d t +\int_0^{+\infty} \int_{{\rm S}_{\mu_t}} \delta \mathcal{J}_{\rm int}(\cdot) \, \d \mu_t(x) \, \d t  + \delta \mathcal{J}_{\text{end}}(\cdot). 
\end{align}

\subsection{Evaluation of the probability measure-flow variation}
For the first term from \eqref{del_J_fin_step}, observe that, the differentiation with respect to the Heaviside for the stopped running integral contributes the integrand evaluated at the stopping time (i.e. \eqref{heav_der}). Then, we use \eqref{mu_var_rel} together with the identities \eqref{appdx_idnt}, \eqref{app_chain_dot_x}, to obtain,
\begin{align} \label{del_J_fin_step2}
    &\int_0^{+\infty} \int_{{\rm S}_{\mu_t}} \mathcal{J}_{\rm int}^0(\cdot) \, \d (\delta \mu_t)(x) \, \d t =\int_0^{+\infty} \int_{{\rm S}_{\mu_t}} \Big \{\onep \left (\frac{\partial}{\partial x} L(t,x,u_t(x)) \nonumber \right. \Big. \\
    &\qquad \Big. \left. +\lambda(t,x)\t\frac{\partial}{\partial x} f(t,x,u_t(x)) + \frac{\partial}{\partial x}\trace\left(\frac{\partial }{\partial x}\lambda(t,x)\frac{\sigma_t\sigma_t\t }{2}\right) \right. \nonumber \\
    &\left. \qquad -\frac{\partial}{\partial x}\lambda(t,x)\t (\dot x(t,x) - \onep f(t,x,u_t(x))) \right. \Big. \nonumber \\
    &\left. \qquad \left(\frac{\partial}{\partial u} L(t,x,u_t(x))+\lambda(t,x)\t\frac{\partial}{\partial u} f(t,x,u_t(x)) \right) \frac{\partial}{\partial x}u_t(x) \right)\delta x(t,x) \Big. \nonumber \\
    &\qquad\Big. -\lambda(t,x)\t \delta \dot x(t,x)  +\Bigg[ L(t, x, u_t(x))   \nonumber \Big. \Big. \\
    &\qquad \Big. \Big. - \lambda(t,x)\t (\dot x(t,x) - f(t,x,u_t(x)))+\trace\left(\frac{\partial }{\partial x}\lambda(t,x)\frac{\sigma_t\sigma_t\t }{2}\right) \Bigg] \Big. \nonumber \\
& \qquad\Big.\quad \underbrace{\ \times \, \delta\bigl(t=\taum(\Tflow{v}{-t}(x))) \nabla_x \taum(\Tflow{v}{-t}(x)\bigr)  \delta x(t,x)}_{\text{surface/jump term}} \Big\} \d \mu_t(x) \, \d t.
\end{align}
\subsection{Evaluation of the fixed-measure and endpoint variations}

\subsubsection{Evaluation of the fixed-measure term}
To analyze the second term from \eqref{del_J_fin_step}, we start with the following identity at any time $t\ge 0$,
\begin{align} \label{lambd_perturb}
    &\int_{{\rm S}_{\mu_t}} \trace \Bigg(\frac{\partial }{\partial x}\delta\lambda(t,x)\t\frac{\sigma_t\sigma_t\t }{2}\Bigg) \, \d \mu_t(x)  =\int_{{\rm S}_{\mu_t}} \trace \Bigg(\frac{\partial }{\partial x}\delta\lambda(t,x)\t\frac{\sigma_t\sigma_t\t }{2}\Bigg) \rho(t,x)\, \d x, \nonumber \\
    &\overset{\text{div. thm.}}{=}-\int_{{\rm S}_{\mu_t}} \delta\lambda(t,x)\t\frac{\sigma_t\sigma_t\t }{2} \nabla_x \log\rho(t,x)\, \d x.
\end{align}
Focusing on the integrand $\delta \mathcal{J}_{\text{int}}$ \eqref{def_J_int_0_var}, and using the identity \eqref{lambd_perturb}, the first-order Taylor expansion around the nominal point yields,
\begin{align}
\delta \mathcal{J}_{\text{int}}(\cdot) 
&=
\onep \left (\frac{\partial}{\partial u} L(t,x_t,u_t(x_t))+\lambda(t,x_t)\t\frac{\partial}{\partial u}  f(t,x_t,u_t(x_t)) \right)\,\delta u_t(x_t) \nonumber
\\
&\quad
- \delta\lambda(t,x_t)\t\big(\dot x(t,x_t)-\onep \tilde f(t,x_t,u_t(x_t))\big) \nonumber \\
& \quad +\Big[ L(t, x_t, u_t(x_t)) - \lambda(t,x_t)\t \big(
\dot x(t,x_t) - f(t,x_t,u_t(x_t)) \big) \nonumber \\
& \quad  +\trace\big(\frac{\partial }{\partial x}\lambda(t,x_t)\frac{\sigma_t\sigma_t\t }{2}\big)\Big]  
\delta \bigl(t=\taum(\Tflow{v}{-t}(x_t)))\delta \taum(\Tflow{v}{-t}(x_t)\bigr),
\end{align}
where the last term comes from the variation with respect to the $\taum$. 

\subsubsection{Evaluation of the endpoint terms} Lastly, for the third term $\delta \mathcal{J}_{\text{end}}$ from \eqref{del_J_fin_step}, using the definitions \eqref{J_end_eps}, \eqref{def_J_end_0_var}, and the final state variation \eqref{term_map_perturb}, under the already established regularity conditions, we obtain
\begin{align*}
    \delta \mathcal{J}_{\text{end}}(\cdot) &=\mathbb{E}_{0,x \sim \mu_0} \Big[\frac{\partial}{\partial t}(\Phi({\taum(x_0)}, x_{{\taum}(x_0)}) -  \eta \Psi({{\taum}(x_0)}, x_{{\taum}(x_0)}))\delta \taum(x_0) \nonumber \Big. \\
    &\Big. +\frac{\partial}{\partial x}(\Phi({\taum(x_0)}, x_{{\taum}(x_0)}) - \eta \Psi({{\taum}(x_0)}, x_{{\taum}(x_0)}))\delta\Tflow{v}{\taum(x_0)} \Big. \nonumber \\
    & \Big.  +\Psi({{\taum}(x_0)}, x_{{\taum}(x_0)}) \delta \eta \Big].
\end{align*}

\subsection{Collection and simplification of first-order terms}
\subsubsection{Collecting first-order terms} For compactness of notation, we introduce the following functions,
\begin{subequations}
\begin{align}
&J_1(t,x_t,\lambda_t,u_t)\Let \frac{\partial }{\partial u}  L(t,x_t,u_t) + \lambda_t\t  \frac{\partial}{\partial u}  f(t,x_t,u_t), \label{J1}\\
&J_2(t,x_t,\lambda_t,u_t) \Let \frac{\partial}{\partial x} L(t,x_t,u_t) + \lambda_t\t  \frac{\partial}{\partial x}  f(t,x_t,u_t) \nonumber \\
& \qquad+\frac{\partial}{\partial x}\trace\left(\frac{\partial }{\partial x}\lambda_t\frac{\sigma_t\sigma_t\t }{2}\right) \nonumber \\
& \qquad -\frac{\partial}{\partial x}\lambda_t\t (\dot x(t,x) - \onep f(t,x,u_t)) \nonumber \\
& \qquad + \left(\frac{\partial}{\partial u} L(t,x_t,u_t)+\lambda_t\t\frac{\partial}{\partial u} f(t,x_t,u_t) \right) \frac{\partial}{\partial x}u_t \label{J2_old}, \\
&J_3(t,x_t,\lambda_t,u_t,\eta)\Let \lambda_t\t  (\tilde f(t,x_t,u_t)-\dot x(t,x_t))  \nonumber \\
& \qquad \qquad +  L(t,x_t,u_t)+ \frac{\partial }{\partial t} \Phi(t,x_t)- \eta \frac{\partial }{\partial t} \Psi(t, x_t), \label{J3}\\
& J_4(t,x_t,\eta) \Let \frac{\partial }{\partial x}\Phi(t, x_t) - \eta  \frac{\partial }{\partial x}\Psi(t, x_t) \label{J4}.
\end{align}
\end{subequations}

Collecting terms, together with \eqref{J1}-\eqref{J4}, we end up with 
\begin{align} \label{zero_var_rhs}
&\delta \mathcal{J}(\cdot)  =\int_0^{+\infty} \int_{{\rm S}_{\mu_t}} \Big [\onep J_1(t,x,\lambda(t,x),u_t(x))\delta u_t(x)  \nn \\
&  + \onep J_2(t,x,\lambda(t,x),u_t(x)) \delta x(t,x) -  \bigl(\dot x(t,x) - \onep \tilde f(t,x,u_t(x))\bigr)\t \delta \lambda_t(x) \nonumber \\
& -\lambda(t,x)\t \delta \dot x(t,x) \Big] \, \d \mu_t(x) \, \d t +\int_{{\rm S}_{\mu_0}}  J_3\Bigl( \taum(x),\Tflow{v}{\taum(x)}(x),\lambda\bigl(\taum(x),\Tflow{v}{\taum(x)}(x)\bigr),u_{\taum(x)},\eta\Bigr) \nonumber \\& \qquad\qquad\delta{\taum}(x)\, \d \mu_0(x) + \int_{{\rm S}_{\mu_0}} \Psi\bigl({{\taum}(x)}, \Tflow{v}{\taum(x)}(x)\bigr) \delta \eta \, \d \mu_0(x) +  \nonumber \\
&\qquad \qquad \int_{{\rm S}_{\mu_0}}  J_4\bigl(\taum(x),\Tflow{v}{\taum(x)}(x),\eta\bigr) \, \delta \Tflow{v}{\taum(x)}(x) \, \d \mu_{0}(x).
\end{align} 
Similarly, \
\begin{align} \label{first_var_rhs}
\delta \mathcal{J}(\cdot)  &= \mathbb{E}_{0,x \sim \mu_0} \Bigg\{\int_0^{\taum(x_0)} \Big [J_1(t,x_t,\lambda(t,x_t),u_t(x_t))\delta u_t(x_t)  -  (\dot x(t,x_t) - \tilde f(t,x_t,u_t(x_t)))\t \delta \lambda_t(x_t) \Big. \Bigg. \nonumber \\
& \Bigg. \Big. \qquad +  J_2(t,x_t,\lambda(t,x_t),u_t(x_t)) \delta x(t,x_t) -\lambda(t,x_t)\t \delta \dot x(t,x_t) \Big] \, \d t \Bigg \}\, \nonumber \\
&+\int_{{\rm S}_{\mu_0}}  J_3\Bigl(\taum(x),\Tflow{v}{\taum(x)}(x), \lambda\bigl(\taum(x),\Tflow{v}{\taum(x)}(x)\bigr),u_{\taum(x)}(\Tflow{v}{\taum(x)}(x)),\eta\Bigr) \, \delta{\taum}(x) \, \d \mu_0(x), \nonumber \\
& + \int_{{\rm S}_{\mu_0}}  J_4\bigl(\taum(x),\Tflow{v}{\taum(x)}(x),\eta\bigr) \, \delta \Tflow{v}{\taum(x)}(x) \, \d \mu_{0}(x) \nonumber \\
& + \int_{{\rm S}_{\mu_0}} \Psi\bigl({{\taum}(x)}, \Tflow{v}{\taum(x)}(x)\bigr) \delta \eta \, \d \mu_0(x).
\end{align} 

In the derivation of \eqref{J1}, \eqref{J2_old} and \eqref{J4}, we used the identity \eqref{appdx_idnt}, and for the variation of $\taum$, we used the equivalent formulation for the objective functional, \eqref{appdx_equiv}, by applying Lebesgue–Besicovitch Differentiation Theorem \cite[Thm. 1.32]{evans_measure} over the initial probability measure $\mu_0$, where it was defined over. 
Finally, the third term, i.e., $\lambda(t,x)\t \delta \dot x(t,x)$ is from \eqref{zero_var_rhs} (and \eqref{first_var_rhs}), is a direct result of \eqref{app_chain_dot_x}.

Notice that we didn't include the jump term from \eqref{del_J_fin_step2} when defining $J_1$ \eqref{J1}, as the trajectories are stopped and not defined afterwards. 

\subsubsection{Simplification of the collected terms}
Observe that, for any variation $\delta \lambda_t$, the constraint $\dot x(t,x)=\onep \tilde f(t,x,u_t(x))$ holds for every admissible triple $(t,x,u_t(x))$. Furthermore, since $J_1$ in \eqref{J1} vanishes, the last term in $J_2$ in \eqref{J2_old} also vanishes. Therefore, $J_2$ from \eqref{J2_old} can be rewritten as,
\begin{align}
J_2(t,x_t,\lambda_t,u_t) \Let &\frac{\partial}{\partial x} L(t,x_t,u_t) + \lambda_t\t  \frac{\partial}{\partial x}  f(t,x_t,u_t) \nonumber \\
& \quad +\frac{\partial}{\partial x}\trace\left(\frac{\partial }{\partial x}\lambda_t\frac{\sigma_t\sigma_t\t }{2}\right)  + \frac{\partial}{\partial x}\lambda_t\t \frac{\sigma_t\sigma_t\t }{2} \nabla_x \log\rho(t,x_t)  \label{J2}.
\end{align}

The forth term from \eqref{zero_var_rhs}, (equivalently, from \eqref{first_var_rhs}) related to $\delta \dot x(t,x)$ simplifies as
\begin{align}
&\hspace{-4 em}- \mathbb{E}_{0,x \sim \mu_0}\Big[\int_0^{\taum(x_0)} \lambda(t,x_t)\t \delta \dot x(t,x_t) \, \d t\Big] \nonumber \\
&\hspace{-2 em}=-\mathbb{E}_{0,x \sim \mu_0}[\lambda(\taum(x_0),x_\taum)\t \delta x(\taum(x_0),x_{\taum(x_0)})] \nonumber \\
&\hspace{-2 em}\ \quad+ \mathbb{E}_{0,x\sim \mu_0} \Big[ \int_0^{\taum(x_0)} \dot \lambda(t,x_t)\t \delta x(t,x_t) \, \d t \Big], \nonumber \\
&\hspace{-2 em}=-\int_{{\rm S}_{\mu_0}}\lambda(\taum(x),\Tflow{v}{t}(\taum(x)))\t \delta \Tflow{v}{\taum(x)} \, \d \mu_{0}(x) \nonumber \\
&\hspace{-2 em}+\int_{{\rm S}_{\mu_0}} \lambda(\taum(x),\Tflow{v}{t}(\taum(x)))\t \dot x(t,x) \delta \taum(x) \, \d \mu_0(x) \nonumber \\
&\hspace{-2 em}\ \quad+ \mathbb{E}_{0,x\sim \mu_0} \Big[ \int_0^{\taum(x_0)} \dot \lambda(t,x_t)\t \delta x(t,x_t) \, \d t \Big],\label{appdx_light2}
\end{align}
where $\dot\lambda(t,x_t)\Let\frac{d}{dt}\lambda(t,x_t)$ is the derivative of the map \(t\mapsto\lambda(t,x_t)\). Integration by parts for each path from the initial distribution $\mu_0$ was applied in the first line, and in the second line, we made use of  the identity \eqref{term_map_perturb}.

\subsection{Derivation of the necessary conditions}
Let us define the simplified functions
\begin{subequations}
\begin{align} 
&\bar J_1(t,x_t,\lambda_t,u_t)\Let \frac{\partial }{\partial u} L(t,x_t,u_t) + \lambda_t\t  \frac{\partial}{\partial u} f(t,x_t,u_t) \label{bar_J1}, \\
&\bar J_2(t,x_t,\lambda_t,u_t,\rho_t) \Let \dot \lambda_t\t+\frac{\partial}{\partial x} L(t,x_t,u_t) \nonumber \\
& \qquad \qquad+ \lambda_t\t  \frac{\partial}{\partial x} f(t,x_t,u_t) +\frac{\partial}{\partial x}\trace\left(\frac{\partial }{\partial x}\lambda_t\frac{\sigma_t\sigma_t\t }{2}\right) \nonumber \\
& \qquad \qquad+ \frac{\partial}{\partial x} \lambda_t \t \frac{\sigma_t \sigma_t\t}{2} \nabla_x \log \rho_t  \label{bar_J2}, \\
&\bar J_3(t,x_t,\dot x_t,\lambda_t,u_t,\eta)\Let \lambda_t\t  f(t,x_t,u_t)+ \trace\left(\frac{\partial }{\partial x}\lambda_t\frac{\sigma_t\sigma_t\t }{2}\right)  \nonumber \\
& \qquad \qquad + L(t,x_t,u_t)+ \frac{\partial }{\partial t} \Phi(t,x_t)- \eta \frac{\partial }{\partial t} \Psi(t, x_t),\label{bar_J3} \\
&\bar J_4(t,x_t,\lambda_t,\eta) \Let -\lambda_t+\frac{\partial }{\partial x}\Phi(t, x_t) - \eta  \frac{\partial }{\partial x}\Psi(t, x_t) \label{bar_J4}.
\end{align}
\end{subequations}
Finally, in the light of the simplifications from \eqref{J2} and \eqref{appdx_light2}, and the quantitites in \eqref{bar_J1}--\eqref{bar_J4}, the functional derivative \eqref{first_var_rhs} becomes,
\begin{align} \label{first_var_rhs_2}
\delta \mathcal{J}(\cdot) 
&= \mathbb{E}_{0,x \sim \mu_0} \Bigg\{\int_0^{\taum(x_0)} \Big [\bar J_1(t,x_t,\lambda(t,x_t),u_t(x_t))\delta u_t(x_t)   -  (\dot x(t,x_t) - \tilde f(t,x,u_t(x_t)))\t \delta \lambda_t(x_t) \Big. \Bigg. \nonumber \\
& \Bigg. \Big. \qquad +  \bar J_2(t,x_t,\lambda(t,x_t),u_t(x_t),\rho(t,x_t)) \delta x(t,x_t)  \Big] \, \d t \Bigg \}\, \nonumber \\
&+\int_{{\rm S}_{\mu_0}} \bar J_3\Bigl(\taum(x),\Tflow{v}{\taum(x)}(x),\dot x\bigl(\taum(x),\Tflow{v}{\taum(x)}(x)\bigr), \lambda\bigl(\taum(x),\Tflow{v}{\taum(x)}(x)\bigr),u_{\taum(x)}(\Tflow{v}{\taum(x)}(x)),\eta\Bigr)\nonumber \\
& \delta{\taum}(x) \, \d \mu_0(x) + \int_{{\rm S}_{\mu_0}} \bar J_4(\taum(x),\Tflow{v}{\taum(x)}(x),\lambda(\taum(x),\Tflow{v}{\taum(x)}(x)),\eta) \, \delta \Tflow{v}{\taum(x)}(x) \, \d \mu_{0}(x) \nonumber \\
& + \int_{{\rm S}_{\mu_0}} \Psi({{\taum}(x)}, \Tflow{v}{\taum(x)}(x)) \delta \eta \, \d \mu_0(x).
\end{align} 

From \eqref{first_var_rhs_2}, the first variation of the augmented functional is expressed as a sum of terms multiplying the admissible perturbations $\delta u_t$, $\delta \lambda_t$, $\delta x$, $\delta \taum$, $\delta \eta$, and $\delta\Tflow{v}{\taum(x)}$. Since these perturbations can be chosen independently, the vanishing of the first variation for every admissible perturbation implies, by the Fundamental Lemma of the Calculus of Variations, that each corresponding coefficient in \eqref{first_var_rhs_2} must vanish separately. In particular, the coefficients of $\delta \lambda_t$ and $\delta \eta$ yield the state dynamics \ref{eq:char_star} and the distribution constraint \ref{OC9}, respectively. Moreover, the coefficients of $\delta u_t$, $\delta x$, $\delta \taum$, and $\delta\Tflow{v}{\taum(x)}$ yield, respectively, the optimal control condition $\bar J_1(\cdot)=0$ in \eqref{bar_J1}, the co-state dynamics along the characteristic $\bar J_2(\cdot)=0$ in \eqref{bar_J2}, the free-terminal-time (stopping) condition $\bar J_3(\cdot)=0$ in \eqref{bar_J3}, and the terminal transversality condition for the co-state $\bar J_4(\cdot)=0$ in \eqref{bar_J4}. Expressed along the optimal characteristic trajectory, these relations are precisely the first-order necessary conditions \ref{OC7}--\ref{OC8}.

\subsection{Common stopping as a special case}

A special case arises when the stopping strategy is common to all realizations, that is, when $\taum$ is a constant function. In this case, the associated optimality condition is obtained by integrating, with respect to $\mu_0$, the coefficient multiplying $\delta\taum$ in \eqref{first_var_rhs_2}. This amounts to taking the expectation of \eqref{bar_J3}, yielding \ref{OC4}.

This completes the proof. \qed

\bibliographystyle{amsalpha}
\bibliography{refs}

@article{wass_filippov,
title = {Differential inclusions in {Wasserstein} spaces: The {Cauchy-Lipschitz} framework},
journal = {Journal of Differential Equations},
volume = {271},
pages = {594-637},
year = {2021},
issn = {0022-0396},
NOTE = {doi: \url{https://doi.org/10.1016/j.jde.2020.08.031}},
author = {B. Bonnet and H. Frankowska},
}

@article {aronson,
    AUTHOR = {D. G. Aronson},
     TITLE = {Bounds for the fundamental solution of a parabolic equation},
  JOURNAL = {Bulletin of the American Mathematical Society},
    VOLUME = {73},
      YEAR = {1967},
     PAGES = {890--896},
       NOTE = {doi: \url{https://doi.org/10.1090/S0002-9904-1967-11830-5}},
}

@article{aronson2,
  title={Non-negative solutions of linear parabolic equations},
  author={D. G. Aronson},
  journal={Annali Della Scuola Normale Superiore Di Pisa-classe Di Scienze},
  year={1968},
  volume={22},
  number= {4},
  pages={607-694},
  NOTE = {URL: \url{https://www.numdam.org/item/ASNSP_1968_3_22_4_607_0}},
}

@book {Pavliotis,
    AUTHOR = {G. A. Pavliotis},
     TITLE = {Stochastic {P}rocesses and {A}pplications: {D}iffusion {P}rocesses, the {F}okker--{P}lanck and {L}angevin {E}quations},
    SERIES = {Texts in Applied Mathematics},
    VOLUME = {60},
 PUBLISHER = {Springer, New York},
      YEAR = {2014},
     PAGES = {xiv+339},
       NOTE = {doi: \url{https://doi.org/10.1007/978-1-4939-1323-7}},
}

@book {book_gr,
    AUTHOR = {L. Ambrosio and N. Gigli and G. Savar\'e},
     TITLE = {Gradient {F}lows: {I}n {M}etric {S}paces and in the {S}pace of {P}robability {M}easures},
    SERIES = {Lectures in Mathematics ETH Z\"urich},
   EDITION = {Second},
 PUBLISHER = {Birkh\"auser Verlag, Basel},
      YEAR = {2008},
     PAGES = {x+334},
     NOTE = {doi: \url{https://doi.org/10.1007/978-3-7643-8722-8}},  
}

@book {book_dynkin,
    AUTHOR = {Dynkin, E. B.},
     TITLE = {Markov {P}rocesses, {V}ol. {I}},
    SERIES = {Die Grundlehren der mathematischen Wissenschaften},
    VOLUME = {121},
 PUBLISHER = {Springer-Verlag, Berlin-G\"ottingen-Heidelberg; Academic
              Press, Inc., Publishers, New York},
      YEAR = {1965},
     PAGES = {xii+365},
  NOTE = {doi: \url{https://doi.org/10.1007/978-3-662-00031-1}},
}

@article{stein1,
author = {C. M. Stein},
title = {Estimation of the Mean of a Multivariate Normal Distribution},
volume = {9},
journal = {The Annals of Statistics},
number = {6},
publisher = {Institute of Mathematical Statistics},
pages = {1135--1151},
year = {1981},
NOTE = {doi: \url{https://doi.org/10.1214/aos/1176345632}},
}

@incollection {SEP7,
    AUTHOR = {H. Rost},
     TITLE = {Skorokhod stopping times of minimal variance},
 BOOKTITLE = {S\'eminaire de {P}robabilit\'es, {X}},
    SERIES = {Lecture Notes in Math.},
    VOLUME = {511},
     PAGES = {194--208},
 PUBLISHER = {Springer, Berlin-New York},
      YEAR = {1976},
      NOTE = {doi: \url{https://www.numdam.org/item/SPS_1976__10__194_0}}
}

@inproceedings{stein2,
author = {Q. Liu and J. D. Lee and M. Jordan},
title = {A Kernelized {S}tein Discrepancy for Goodness-of-fit Tests},
year = {2016},
booktitle = {Proceedings of the 33rd International Conference on International Conference on Machine Learning},
volume = {48},
pages = {276–284},
numpages = {9},
address = {New York, NY},
NOTE = {URL: \url{https://proceedings.mlr.press/v48/liub16.html}},
}

@book{villani_ot,
  author    = {C. Villani},
  title     = {Optimal {T}ransport: {O}ld and {N}ew},
  publisher = {Springer Berlin Heidelberg},
  address   = {Berlin, Heidelberg},
  series    = {Grundlehren der mathematischen Wissenschaften},
  edition   = {1},
  year      = {2009},
  isbn      = {978-3-540-71049-3},
  NOTE       = {doi: \url{https://doi.org/10.1007/978-3-540-71050-9}}
}

@book{book_ge3,
    AUTHOR = {C. Villani},
     TITLE = {Topics in {O}ptimal {T}ransportation},
    SERIES = {Graduate Studies in Mathematics},
    VOLUME = {58},
 PUBLISHER = {American Mathematical Society, Providence, RI},
      YEAR = {2003},
     PAGES = {xvi+370},
     NOTE = {doi: \url{https://doi.org/10.1090/gsm/058}},
}

@book{fleming_soner,
  author    = {W. H. Fleming and H. M. Soner},
  title     = {Controlled {M}arkov {P}rocesses and {V}iscosity {S}olutions},
  series    = {Stochastic Modelling and Applied Probability},
  volume    = {25},
  edition   = {2},
  publisher = {Springer New York},
  address   = {New York},
  year      = {2006},
  NOTE       = {doi: \url{https://doi.org/10.1007/0-387-31071-1}},
}

@article{otto,
author = {Otto, F.},
title = {The Geometry Of Dissipative Evolution Equations: The Porous Medium Equation},
journal = {Communications in Partial Differential Equations},
volume = {26},
number = {1-2},
pages = {101--174},
year = {2001},
publisher = {Taylor \& Francis},
NOTE = {doi: \url{https://doi.org/10.1081/PDE-100002243}},
}

@book{SEP1,
  author = {A. V. Skorokhod},
  title = {Issledovaniya po Teorii Sluchainykh Protsessov},
  publisher = {Izd-vo Kievskogo un-ta},
  year = {1961}
}

@book {SEP-eng,
    AUTHOR = {A. V. Skorokhod},
     TITLE = {Studies in the {T}heory of {R}andom {P}rocesses},
 PUBLISHER = {Dover Books on Mathematics},
 publisher = {Dover Publications},
      YEAR = {2017},
     PAGES = {viii+208},
     NOTE = {URL: \url{https://tinyurl.com/4xjsemch}},
}

@article {SEP2,
    AUTHOR = {S. Jaimungal and A. Kreinin and A. Valov},
     TITLE = {The generalized {S}hiryaev problem and {S}korokhod embedding},
  JOURNAL = {Theory of Probability and its Applications},
    VOLUME = {58},
      YEAR = {2014},
    NUMBER = {3},
     PAGES = {493--502},
     NOTE = {doi: \url{https://doi.org/10.1137/S0040585X97986734}},
}

@article {SEP3,
    AUTHOR = {D. H. Root},
     TITLE = {The existence of certain stopping times on {B}rownian motion},
  JOURNAL = {Annals of Mathematical Statistics},
    VOLUME = {40},
     number = {2},
      YEAR = {1969},
     PAGES = {715--718},
       NOTE = {doi: \url{https://doi.org/10.1214/aoms/1177697749}},
}

@article {SEP4,
    AUTHOR = {M. G. C. Alexander and J. Wang},
     TITLE = {Root's barrier: construction, optimality and applications to variance options},
  JOURNAL = {The Annals of Applied Probability},
    VOLUME = {23},
      YEAR = {2013},
    NUMBER = {3},
     PAGES = {859--894},
       NOTE = {doi: \url{https://doi.org/10.1214/12-AAP857}},
}

@article {SEP5,
    AUTHOR = {P. Gassiat and H. Oberhauser and C. Z. Zou},
     TITLE = {A free boundary characterisation of the {R}oot barrier for
              {M}arkov processes},
  JOURNAL = {Probability Theory and Related Fields},
    VOLUME = {180},
      YEAR = {2021},
    NUMBER = {1-2},
     PAGES = {33--69},
       NOTE = {doi: \url{https://doi.org/10.1007/s00440-021-01052-6}},
}

@article {SEP6,
    AUTHOR = {H. Rost},
     TITLE = {The stopping distributions of a {M}arkov {P}rocess},
  JOURNAL = {Inventiones Mathematicae},
    VOLUME = {14},
      YEAR = {1971},
     PAGES = {1--16},
       NOTE = {doi: \url{https://doi.org/10.1007/BF01418740}},
}

@article{SEP9,
     author = {J. Az\'ema and M. Yor},
     title = {A simple solution to the {S}korokhod problem},
     journal = {S{\'e}minaire de probabilit{\'e}s de Strasbourg},
      volume = {17},
  year = {1983},
  pages = {221-224},
     publisher = {Springer - Lecture Notes in Mathematics},
     language = {fr},
     NOTE = {doi: \url{https://www.numdam.org/item/SPS_1979__13__90_0}}
}

@incollection{SEP11,
  author = {E. Perkins},
  title = {The {C}ereteli--{D}avis Solution to the \({H}^{1}\)-Embedding Problem and an Optimal Embedding in {B}rownian Motion},
  booktitle = {Seminar on Stochastic Processes, 1985. Progress in Probability and Statistics},
  volume = {12},
  year = {1985},
  pages = {172-223},
  NOTE = {doi: \url{https://doi.org/10.1007/978-1-4684-6748-2_12}},
}

@article{vis_comp,
author = {M. Talbi and N. Touzi and J. Zhang},
title = {Viscosity Solutions for Obstacle Problems on {W}asserstein Space},
journal = {SIAM Journal on Control and Optimization},
volume = {61},
number = {3},
pages = {1712-1736},
year = {2023},
NOTE = {doi: \url{https://doi.org/10.1137/22M1488119}},
}

@article{class_ver,
author = {Talbi, M. and Touzi, N. and Zhang, J.},
title = {Dynamic Programming Equation for the Mean Field Optimal Stopping Problem},
journal = {SIAM Journal on Control and Optimization},
volume = {61},
number = {4},
pages = {2140-2164},
year = {2023},
NOTE = {doi: \url{https://doi.org/10.1137/21M1404259}},
}

@misc{SEP12,
      title={Perkins Embedding for General Starting Laws}, 
      author={A. Grass},
      journal={arXiv preprint, arXiv:2307.03618},
      year={2023},
      NOTE = {URL: \url{https://arxiv.org/abs/2307.03618}}, 
}

@article{SEP13,
title = {Embedding in Brownian motion with drift and the {A}zéma–{Y}or construction},
journal = {Stochastic Processes and their Applications},
volume = {85},
number = {2},
pages = {249-254},
year = {2000},
issn = {0304-4149},
NOTE = {doi: \url{https://doi.org/10.1016/S0304-4149(99)00077-0}},
author = {P. Grandits and N. Falkner},
}

@article{modern_bound,
title = {Density and gradient estimates for non degenerate Brownian {SDE}s with unbounded measurable drift},
journal = {Journal of Differential Equations},
volume = {272},
pages = {330-369},
year = {2021},
issn = {0022-0396},
NOTE  = {doi: \url{https://doi.org/10.1016/j.jde.2020.09.004}},
author = {S. Menozzi and A. Pesce and X. Zhang},
}

@article {SEP14,
    AUTHOR = {J. L. Pedersen and G. Peskir},
     TITLE = {The {A}z\'ema-{Y}or embedding in non-singular diffusions},
  JOURNAL = {Stochastic Processes and their Applications},
    VOLUME = {96},
      YEAR = {2001},
    NUMBER = {2},
     PAGES = {305--312},
       NOTE = {doi: \url{https://doi.org/10.1016/S0304-4149(01)00120-X}},
}

@article{SEP17,
  author = {G. Peskir},
  title = {Designing Options Given the Risk: the Optimal {S}korokhod-Embedding Problem},
  journal = {Stochastic Processes and their Applications},
  volume = {81},
  year = {1999},
  pages = {25-38},
  NOTE = {doi: \url{https://doi.org/10.1016/S0304-4149(98)00097-0}},
}

@article{SEP18,
  author = {N. Ghoussoub and Y. Kim and A. Palmer},
  title = {{PDE} Methods for Optimal {S}korokhod Embeddings},
  year = {2019},
  number = {3},
  pages = {113-143},
  volume = {58},
  journal = {Calculus of Variations and Partial Differential Equations},
 NOTE = {doi: \url{https://doi.org/10.1007/s00526-019-1563-7}},
}

@phdthesis{SEP19,
  author = {S. Kinsley},
  title = {Duality Methods for Barrier-type Solutions to the {S}korokhod Embedding Problem},
  school = {University of Bath},
  year = {2018},
  NOTE = {URL: \url{https://purehost.bath.ac.uk/ws/portalfiles/portal/187964586/Thesis.pdf}},
}

@incollection {SEP21,
    AUTHOR = {N. Falkner},
     TITLE = {On {S}korohod embedding in {$n$}-dimensional {B}rownian motion
              by means of natural stopping times},
 BOOKTITLE = {Seminar on {P}robability, {XIV} ({P}aris, 1978/1979)
              ({F}rench)},
    SERIES = {Lecture Notes in Math.},
    VOLUME = {784},
     PAGES = {357--391},
 PUBLISHER = {Springer, Berlin},
      YEAR = {1980},
      NOTE = {doi: \url{https://www.numdam.org/item/SPS_1980__14__357_0/}}
}

@article{opt_stop_ex1,
 ISSN = {00219002},
 author = {E. Ekstr\"{o}m and H. Wanntorp},
 journal = {Journal of Applied Probability},
 number = {1},
 pages = {170-180},
 publisher = {Applied Probability Trust},
 title = {Optimal Stopping Of A {B}rownian Bridge},
 volume = {46},
 year = {2009},
 NOTE = {doi: \url{https://doi.org/10.1239/jap/1238592123}}
}

@article{SEP25,
  author = {A. M. G. Cox and S. M. Kinsley},
  title = {Discretisation and Duality of Optimal {S}korokhod Embedding Problems},
  journal = {Stochastic Processes and their Applications},
  volume = {129},
  number = {7},
  year = {2019},
  pages = {2376-2405},
  publisher = {Elsevier},
  NOTE = {doi: \url{https://doi.org/10.1016/j.spa.2018.07.008}},
}

@article {SEP29,
    AUTHOR = {J. Ob{\l}{\'o}j},
     TITLE = {The {S}korokhod embedding problem and its offspring},
  JOURNAL = {Probability Surveys},
    VOLUME = {1},
      YEAR = {2004},
     PAGES = {321--390},
       NOTE = {doi: \url{https://doi.org/10.1214/154957804100000060}},
}

@book {book1,
    AUTHOR = {{\O}ksendal, B.},
     TITLE = {Stochastic {D}ifferential {E}quations: {A}n {I}ntroduction with {A}pplications},
    SERIES = {Universitext},
   EDITION = {Sixth},
 PUBLISHER = {Springer-Verlag, Berlin},
      YEAR = {2003},
     PAGES = {xxiv+360},
       NOTE = {doi: \url{https://doi.org/10.1007/978-3-642-14394-6}},
}

@book {evans_measure,
    AUTHOR = {L. C. Evans and R. F. Gariepy},
     TITLE = {Measure {T}heory and {F}ine {P}roperties of {F}unctions},
    SERIES = {Textbooks in Mathematics},
   EDITION = {Revised},
 PUBLISHER = {CRC Press, Boca Raton, FL},
      YEAR = {2015},
     PAGES = {xiv+299},
     NOTE = {doi: \url{https://doi.org/10.1201/b18333}},
}

@book {lieberman,
    AUTHOR = {G. M. Lieberman},
     TITLE = {Second {O}rder {P}arabolic {D}ifferential {E}quations},
 PUBLISHER = {World Scientific Publishing Co., Inc., River Edge, NJ},
      YEAR = {1996},
     PAGES = {xii+439},
       NOTE = {doi: \url{https://doi.org/10.1142/3302}},
}

@book {rockafellar,
    AUTHOR = {R. T. Rockafellar and R. J.-B. Wets},
     TITLE = {Variational {A}nalysis},
    SERIES = {Fundamental
              Principles of Mathematical Sciences},
    VOLUME = {317},
 PUBLISHER = {Springer-Verlag, Berlin},
      YEAR = {1998},
     PAGES = {xiv+733},
       NOTE = {doi: \url{https://doi.org/10.1007/978-3-642-02431-3}},
}

@book {prob_mean_field,
    AUTHOR = {R. Carmona and F. Delarue},
     TITLE = {Probabilistic {T}heory of {M}ean {F}ield {G}ames with {A}pplications.
              {I}},
    SERIES = {Probability Theory and Stochastic Modelling},
    VOLUME = {83},
 PUBLISHER = {Springer, Cham},
      YEAR = {2018},
     PAGES = {xxv+713},
     NOTE  = {doi: \url{https://doi.org/10.1007/978-3-319-58920-6}},
}

@ARTICLE{killing_tg1,
  author={A. Eldesoukey and O. M. Miangolarra and T. T. Georgiou},
  journal={IEEE Control Systems Letters}, 
  title={An Excursion Onto {S}chrödinger’s Bridges: Stochastic Flows With Spatio-Temporal Marginals}, 
  year={2024},
  volume={8},
  number={},
  pages={1138-1143},
  NOTE = {doi: \url{https://doi.org/10.1109/LCSYS.2024.3409107}}
}

@article{killing_tg2,
author = {Y. Chen and T. T. Georgiou and M. Pavon},
title = {Optimal Survival Strategies for Diffusive Flows: A {S}chrödinger Bridge Approach to Unbalanced Transport},
journal = {SIAM Review},
volume = {67},
number = {3},
pages = {579-604},
year = {2025},
NOTE = {doi: \url{https://doi.org/10.1137/25M176581X}},
}

@article{ref:teter2025probabilistic,
  title={Probabilistic {L}ambert problem: connections with optimal mass transport, {S}chr{\"o}dinger bridge, and reaction-diffusion {PDE}s},
  author={A. M. H. Teter and I. Nodozi and A. Halder},
  journal={SIAM Journal on Applied Dynamical Systems},
  volume={24},
  number={1},
  pages={16--43},
  year={2025},
  publisher={SIAM},
  NOTE = {doi: \url{https://doi.org/10.1137/24M1646145}}
}

@article{superposition, 
title={Continuity equations and {ODE} flows with non-smooth velocity},
volume={144}, 
number={6},
journal={Proceedings of the Royal Society of Edinburgh: Section A Mathematics}, author={L. Ambrosio and G. Crippa},
year={2014},
pages={1191–1244},
NOTE = {doi: \url{https://doi.org/10.1017/S0308210513000085},}}

@book {book_gr2,
    AUTHOR = {Ambrosio, L. and Dacorogna, B. and Mascolo, E. and Marcellini, P. and Caffarelli, L.A. and Crandall, M.G. and Evans, L.C. and Fusco, N.},
     TITLE = {Calculus of {V}ariations and {N}onlinear {P}artial {D}ifferential {E}quations},
    SERIES = {Lecture Notes in Mathematics},
    VOLUME = {1927},
    EDITOR = {Dacorogna, Bernard and Marcellini, Paolo},
 PUBLISHER = {Springer-Verlag, Berlin; Fondazione C.I.M.E., Florence},
      YEAR = {2008},
     PAGES = {xii+196},
       NOTE = {doi: \url{https://doi.org/10.1007/978-3-540-75914-0}},
}

@article{ref:YC:TG:MP:SB-and-OT,
  title={On the relation between optimal transport and {S}chr{\"o}dinger bridges: A stochastic control viewpoint},
  author={Y. Chen and T. T. Georgiou and M. Pavon},
  journal={Journal of Optimization Theory and Applications},
  volume={169},
  number={2},
  pages={671--691},
  year={2016},
  publisher={Springer},
  NOTE = {doi: \url{https://doi.org/10.1007/s10957-015-0803-z}},
}

@book {book_sc,
    AUTHOR = {J. Yong and X. Y. Zhou},
     TITLE = {Stochastic {C}ontrols: {H}amiltonian {S}ystems and {HJB} {E}quations},
    SERIES = {Applications of Mathematics (New York)},
    VOLUME = {43},
 PUBLISHER = {Springer-Verlag, New York},
      YEAR = {1999},
     PAGES = {xxii+438},
       NOTE = {doi: \url{https://doi.org/10.1007/978-1-4612-1466-3}},
}

@book {book_stop,
    AUTHOR = {G. Peskir and A. Shiryaev},
     TITLE = {Optimal {S}topping and {F}ree-{B}oundary {P}roblems},
    SERIES = {Lectures in Mathematics ETH Z\"urich},
 PUBLISHER = {Birkh\"auser Verlag, Basel},
      YEAR = {2006},
     PAGES = {xxii+500},
      NOTE = {doi: \url{https://doi.org/10.1007/978-3-7643-7390-0}},
}

@inproceedings{intro1,
author = {A. Saravanos and A. Tsolovikos and E.  Bakolas and E. Theodorou},
year = {2021},
pages = {},
title = {Distributed Covariance Steering with Consensus {ADMM} for Stochastic Multi-Agent Systems},
NOTE = {doi: \url{https://doi.org/10.15607/RSS.2021.XVII.075}},
booktitle = {Robotics: Science and Systems 2021},
}

@article{intro2,
author = {C. Greco and S. Campagnola and M. Vasile},
title = {Robust Space Trajectory Design Using Belief Optimal Control},
journal = {Journal of Guidance, Control, and Dynamics},
volume = {45},
number = {6},
pages = {1060-1077},
year = {2022},
NOTE = {doi: \url{https://doi.org/10.2514/1.G005704}},
}

@article{intro3,
author = {Z. Zhao and H. Shang and Z. Yu and J. Ren},
year = {2024},
pages = {1521--1541},
number = {8},
volume = {47},
title = {Stochastic Trajectory Planning for Autonomous Aerobraking Using Convex Optimization and Covariance Control},
journal = {Journal of Guidance, Control, and Dynamics},
NOTE = {doi: \url{https://doi.org/10.2514/1.G008030}},
}

@article{intro4,
author = {E. Bayraktar and C. W. Miller},
title = {Distribution-constrained optimal stopping},
journal = {Mathematical Finance},
volume = {29},
number = {1},
pages = {368--406},
year = {2019},
NOTE = {doi: \url{https://doi.org/10.1111/mafi.12171}},
}

@article{intro6,
  author    = {E. Bayraktar and Zhou Zhou},
  title     = {On Zero-Sum Optimal Stopping Games},
  journal   = {Applied Mathematics \& Optimization},
  volume    = {78},
  number    = {3},
  pages     = {457--468},
  year      = {2018},
  NOTE       = {doi: \url{https://doi.org/10.1007/s00245-017-9412-6}},
}

@article{intro5,
title = {Stochastic control/stopping problem with expectation constraints},
journal = {Stochastic Processes and their Applications},
volume = {176},
pages = {104430},
year = {2024},
author = {E. Bayraktar and S. Yao},
NOTE = {doi: \url{https://doi.org/10.1016/j.spa.2024.104430}},
}

@article {Kiefer,
    AUTHOR = {J. Kiefer},
     TITLE = {Skorohod embedding of multivariate {RV}'s, and the sample
              {DF}},
  JOURNAL = {Zeitschrift f\"ur Wahrscheinlichkeitstheorie und Verwandte
              Gebiete},
    VOLUME = {24},
      YEAR = {1972},
    NUMBER = {1},
     PAGES = {1--35},    
       NOTE = {doi: \url{https://doi.org/10.1007/BF00532460}},
}

@article {numeric2,
    AUTHOR = {S. Becker and P. Cheridito and A. Jentzen and T. Welti},
     TITLE = {Solving high-dimensional optimal stopping problems using deep
              learning},
  JOURNAL = {European Journal of Applied Mathematics},
    VOLUME = {32},
      YEAR = {2021},
    NUMBER = {3},
     PAGES = {470--514},
       NOTE = {doi: \url{https://doi.org/10.1017/S0956792521000073}},
}

@article {numeric3,
    AUTHOR = {A. M. Reppen and H. M. Soner and V. Tissot-Daguette},
     TITLE = {Neural optimal stopping boundary},
  JOURNAL = {Mathematical Finance. An International Journal of Mathematics,
              Statistics and Financial Economics},
    VOLUME = {35},
      YEAR = {2025},
    NUMBER = {2},
     PAGES = {441--469},
       NOTE = {doi: \url{https://doi.org/10.1111/mafi.12450}},
}

@inproceedings{sch1,
author = {N. Gushchin and S. Kholkin and E. Burnaev and A. Korotin},
title = {Light and Optimal {S}chr\"odinger Bridge Matching},
year = {2024},
publisher = {JMLR.org},
booktitle = {Proceedings of the 41st International Conference on Machine Learning},
number = {680},
address = {Vienna, Austria},
pages = {17100--17122},
NOTE = {URL: \url
{https://openreview.net/forum?id=EWJn6hfZ4J}}
}

@inproceedings{sch2,
author = {J. Garg and X. Zhang and Q. Zhou},
title = {Soft-constrained {S}chr\"odinger Bridge: a Stochastic Control Approach},
year = {2024},
publisher = {PMLR},
booktitle = {Proceedings of the 27th International Conference on 
             Artificial Intelligence and Statistics (AISTATS)},
pages = {4429–4437},
volume = {238},
address = {Valencia, Spain},
NOTE = {URL: \url{https://proceedings.mlr.press/v238/garg24a.html}}
}

@article{pakniyat2022convex,
  title={A convex duality approach for assigning probability distributions to the state of nonlinear stochastic systems},
  author={A. Pakniyat},
  journal={IEEE Control Systems Letters},
  volume={6},
  pages={3080--3085},
  year={2022},
  NOTE = {doi: \url{https://doi.org/10.1109/LCSYS.2022.3181525}}
}

@article{pakniyat2025graphon,
  title={A Graphon Mean Field Convex Duality Approach to Shaping the Terminal Probability Distribution of a Network of Nonlinear Multi-Agent Systems},
  author={A. Pakniyat},
  journal={Journal of Systems Science and Complexity},
  volume={38},
  number={1},
  pages={349--368},
  year={2025},
  publisher={Springer},
  NOTE = {doi: \url{https://doi.org/10.1007/s11424-025-4507-7}}
}

@article{ref:PM:DC:JL:TAC:Motion:Planning,
  title={Motion planning for continuous-time stochastic processes: A dynamic programming approach},
  author={P. M. Esfahani and D. Chatterjee and J. Lygeros},
  journal={IEEE Transactions on Automatic Control},
  volume={61},
  number={8},
  pages={2155--2170},
  year={2015},
  NOTE = {doi: \url{https://doi.org/10.1109/TAC.2015.2500638}},
}

@article{ref:PM:DC:JL:Aut:ReachAvoid,
  title={The stochastic reach-avoid problem and set characterization for diffusions},
  author={P. M. Esfahani and D. Chatterjee and J. Lygeros},
  journal={Automatica},
  volume={70},
  pages={43--56},
  year={2016},
  publisher={Elsevier},
  NOTE = {doi: \url{https://doi.org/10.1016/j.automatica.2016.03.016}},
}

@inproceedings{ref:AP:PS:CDC:PartObs:Steering,
  title={Partially observed steering the state of linear stochastic systems},
  author={A. Pakniyat and P. Tsiotras},
  booktitle={2021 60th IEEE Conference on Decision and Control (CDC)},
  pages={3780--3785},
  year={2021},
  NOTE = {doi: \url{https://doi.org/10.1109/CDC45484.2021.9683692}}
}

@inproceedings{ref:AP:PP:ACC:MinPrnciple:Steering,
  title={Steering the state of linear stochastic systems: a constrained minimum principle formulation},
  author={A. Pakniyat and P. Tsiotras},
  booktitle={2021 American Control Conference (ACC)},
  pages={1300--1305},
  year={2021},
  NOTE = {doi: \url{https://doi.org/10.23919/ACC50511.2021.9483276}}
}

@book{ref:liber,
    author = {D. Liberzon},
     title = {Calculus of {V}ariations and {O}ptimal {C}ontrol {T}heory: {A} {C}oncise {I}ntroduction},
 publisher = {Princeton University Press},
     pages = {xv+235},
      year = {2012},
      NOTE = {URL: \url{https://tinyurl.com/3ws6ufnp}}
}

\end{document}